\numberwithin{equation}{section}
\newtheorem{Thm}[equation]{Theorem}
\newtheorem*{Thm*}{Theorem}
\newtheorem{Prop}[equation]{Proposition}
\newtheorem{Lem}[equation]{Lemma}
\newtheorem{Cor}[equation]{Corollary}
\theoremstyle{remark}
\newtheorem{Def}[equation]{Definition}
\newtheorem{Ter}[equation]{Terminology}
\newtheorem{Exa}[equation]{Example}
\newtheorem{Hyp}[equation]{Hypothesis}
\newtheorem{Rem}[equation]{Remark}
\newcommand{\nc}{\newcommand}
\nc{\dmo}{\DeclareMathOperator}
\dmo{\Ab}{Ab}
\dmo{\Abelem}{Abelem}
\dmo{\Add}{Add}
\dmo{\Aut}{Aut}
\dmo{\Bi}{bi}
\dmo{\Bisets}{Bisets}
\dmo{\CAT}{CAT}
\dmo{\coev}{coev}
\dmo{\Coloc}{Coloc}
\dmo{\ev}{ev}
\dmo{\Fib}{Fib}
\dmo{\Free}{Free}
\dmo{\Id}{Id}
\dmo{\Loc}{Loc}
\dmo{\rmI}{I}
\dmo{\rmL}{L}
\dmo{\rmR}{R}
\dmo{\Spc}{Spc}
\dmo{\Thick}{Thick}
\dmo{\chara}{char}%
\dmo{\coh}{coh} 
\dmo{\Coind}{CoInd}
\dmo{\coker}{coker}
\dmo{\cone}{cone}
\dmo{\Der}{D}
\nc{\Rder}{\mathrm{R}} 
\nc{\Lder}{\mathrm{L}} 
\dmo{\Khocat}{K}
\dmo{\End}{End}
\dmo{\Ext}{Ext}
\dmo{\rmH}{H}
\dmo{\Ho}{Ho}
\dmo{\Hom}{Hom}
\dmo{\id}{id}
\dmo{\Img}{Im}
\dmo{\incl}{incl}
\dmo{\Ind}{Ind}
\dmo{\CoInd}{CoInd}
\dmo{\Ker}{Ker}
\dmo{\Les}{Les}
\dmo{\Map}{Map}%
\dmo{\Mod}{Mod}
\dmo{\GrMod}{GrMod}
\dmo{\lax}{lax}
\dmo{\modname}{mod}%
\dmo{\grmod}{grmod}
\dmo{\Mor}{Mor}%
\dmo{\Obj}{Obj}
\dmo{\opname}{op}
\dmo{\Or}{Or}
\dmo{\pr}{pr}
\dmo{\canin}{in} 
\dmo{\Proj}{Proj} 
\dmo{\proj}{proj}
\dmo{\Qcoh}{Qcoh}
\dmo{\rank}{rank}
\dmo{\Res}{Res}
\dmo{\Rname}{R}
\dmo{\SH}{SH}
\nc{\SHp}{\SH_{(p)}}
\dmo{\smallb}{b}
\dmo{\smallperf}{perf}
\dmo{\Span}{Span}
\dmo{\Spec}{Spec}
\dmo{\Stab}{Stab}
\dmo{\stab}{stab}
\dmo{\supp}{supp}
\dmo{\switch}{switch}
\dmo{\TTR}{TTR}
\nc{\Beren}[1]{{\color{MidnightBlue}#1}}
\nc{\Ivo}[1]{{\color{OliveGreen}#1}}
\nc{\Paul}[1]{{\color{Violet}#1}}
\nc{\Pout}[1]{\Paul{\sout{#1}}}
\nc{\Bout}[1]{\Beren{\sout{#1}}}
\nc{\Iout}[1]{\Ivo{\sout{#1}}}
\nc{\IFF}{$\Leftrightarrow$}
\nc{\DO}{\omega_f}
\nc{\Crelcpt}{\cat{C}^{c/f}}
\nc{\Crich}{\underline{\cat{C}}}
\nc{\DbG}{\Db(\kk G\mmod)}
\nc{\uA}{\underline{A}}
\nc{\doublequot}[3]{#1\backslash #2/#3}
\nc{\HGK}{\doublequot HGK}
\nc{\quadtext}[1]{\quad\textrm{#1}\quad}
\nc{\qquadtext}[1]{\qquad\textrm{#1}\qquad}
\nc{\PZG}{\cat C_{\bbZ}(\bbZ G)}
\nc{\TTRK}{\TTR(\cat K)}
\nc{\psets}{\mathsf{-sets}_\sbull}
\nc{\Gsets}{G\mathsf{-sets}}
\nc{\Hsets}{H\mathsf{-sets}}
\nc{\AddK}{\Add^{\Sigma}(\cat K)}
\nc{\adj}{\dashv}
\nc{\adjto}{\rightleftarrows}
\nc{\AK}{A\MModcat{K}}
\nc{\BK}{B\MModcat{K}}
\nc{\bbA}{\mathbb{A}}
\nc{\bbB}{\mathbb{B}}
\nc{\bbC}{\mathbb{C}}
\nc{\bbI}{\mathbb{I}}
\nc{\bbN}{\mathbb{N}}
\nc{\bbP}{\mathbb{P}}
\nc{\bbQ}{\mathbb{Q}}
\nc{\bbR}{\mathbb{R}}
\nc{\bbZ}{\mathbb{Z}}
\nc{\bbZp}{\mathbb{Z}_{(p)}}
\nc{\Sphere}{\mathbb{S}} 
\nc{\cat}[1]{\mathscr{#1}}
\nc{\Displ}{\displaystyle}
\nc{\ie}{{\sl i.e.}\ }
\nc{\into}{\mathop{\rightarrowtail}}
\nc{\inv}{^{-1}}
\nc{\isoto}{\buildrel \sim\over\to}
\nc{\isotoo}{\mathop{\buildrel \sim\over\too}}
\nc{\kk}{\Bbbk}
\nc{\onto}{\mathop{\twoheadrightarrow}}
\nc{\too}{\mathop{\longrightarrow}\limits}
\nc{\xytriangle}[7]{\xymatrix@C=#7em{#1\ar[r]^-{\Displ #4} & #2 \ar[r]^-{\Displ #5}&#3\ar[r]^-{\Displ #6}&T #1}}
\nc{\ababs}{{\sl ab absurdo}}
\nc{\adh}[1]{\overline{#1}}
\nc{\adhpt}[1]{\adh{\{#1\}}}
\nc{\aka}{{a.\,k.\,a.}\ }
\nc{\ala}{{\sl \`a la}\ }
\nc{\Autcat}[1]{\Aut_{\cat #1}}
\nc{\cO}{\mathcal{O}}
\nc{\calO}{\mathcal{O}}
\nc{\cV}{\mathcal{V}}
\nc{\Db}{\Der^{\smallb}}
\nc{\Dqc}{\Der_{\Qcoh}}
\nc{\Dperf}{\Der^{\smallperf}}
\nc{\eg}{{\sl e.\,g.}}
\nc{\eps}{\epsilon}
\nc{\FFree}{\,\text{--}\Free}%
\nc{\FFreecat}[1]{\FFree_{\cat #1}}
\nc{\FK}{\mathcal{F}(\cat K)}
\nc{\gm}{\mathfrak{m}}
\nc{\Homcat}[1]{\Hom_{\cat #1}}
\nc{\Morcat}[1]{\Mor_{\cat #1}}
\nc{\hook}{\hookrightarrow}
\nc{\Idcat}[1]{\Id_{\cat{#1}}}
\nc{\ideal}[1]{\langle #1\rangle}
\nc{\ihom}{{\mathsf{hom}}} 
\nc{\ihomcat}[1]{\ihom_{\cat #1}}
\nc{\Kcat}[1]{#1\MModcat{K}}
\nc{\KP}{\cat{K}_{\cat P}}
\nc{\loccit}{{\sl loc.\ cit.}}
\nc{\lind}{\rmL\!}
\nc{\RR}{\rmR\!}
\nc{\Lotimes}{\otimes^{\rmL}}
\nc{\Mid}{\,\big|\,}
\nc{\MMod}{\,\text{-}\Mod}%
\nc{\MModcat}[1]{\MMod_{\cat #1}}%
\nc{\mmod}{\,\text{--}\modname}%
\nc{\mmodb}{\mmod^\sbull}%
\nc{\op}{^{\opname}}
\nc{\oto}[1]{\overset{#1}\to}
\nc{\otoo}[1]{\overset{#1}{\,\too\,}}
\nc{\ourfrac}[2]{\genfrac{}{}{0pt}{}{\Displ #1}{\scriptstyle #2}}
\nc{\ouriff}{\Leftrightarrow}
\nc{\oursetminus}{\!\smallsetminus\!}
\nc{\potimes}[1]{^{\otimes #1}}
\nc{\pproj}{\,\text{-}\proj}
\nc{\ptimes}[1]{^{\times #1}}
\nc{\dd}[1]{_{{\scriptscriptstyle(#1)}}}
\nc{\uu}[1]{^{{\scriptscriptstyle(#1)}}}
\nc{\pushout}{\textrm{\rm p.o.}}
\nc{\qp}{q_{_{\scriptstyle \cat P}}\!}%
\nc{\Rcat}[1]{\Rname_{\cat #1}^\sbull}
\nc{\rdto}{}
\nc{\restr}[1]{_{|_{\scriptstyle #1}}}
\nc{\RK}{\Rcat{K}}
\nc{\sbull}{{\scriptscriptstyle\bullet}}
\nc{\SET}[2]{\big\{\,#1\Mid#2\,\big\}}
\nc{\SHA}{\SH{}^{\bbA^{1}}}
\nc{\SHfin}{\SH^{\text{\rm fin}}}
\nc{\smallmatrice}[1]{\left(\begin{smallmatrix} #1 \end{smallmatrix}\right)}
\nc{\SpcAK}{\Spc(A\MModcat{K})}
\nc{\SpcK}{\Spc(\cat K)}
\nc{\suppcat}[1]{\supp(\cat #1)}
\nc{\then}{\Rightarrow}
\nc{\tideal}[1]{\ideal{#1}}
\nc{\unit}{\mathbb{1}}
\nc{\unitcat}[1]{\unit_{\cat #1}}
\nc{\onept}{\mathrm{B}} 
\nc{\HG}{\!{}^{^H}\overline{G}}
\nc{\uY}{\widetilde{Y}}
\nc{\Dk}{\dual_{\kappa}}
\nc{\Dkk}{\dual_{\kappa'}}
\nc{\bs}{\backslash}
\nc{\biCpt}{\mathrm{biCpt}}
\nc{\biLCpt}{\mathrm{biLCpt}} 
\nc{\Grps}{\mathsf{Grps}}
\nc{\Sets}{\mathsf{Sets}}
\nc{\Top}{\mathsf{Top}}
\nc{\Comp}{\mathsf{Top}^{\mathsf{comp}}}
\nc{\lG}{{}_{{\color{Gray}\scriptscriptstyle G}}}
\nc{\lH}{{}_{{\color{Gray}\scriptscriptstyle H}}}
\nc{\rG}{_{{\color{Gray}\!\scriptscriptstyle G}}}
\nc{\rH}{_{{\color{Gray}\!\scriptscriptstyle H}}}
\nc{\rK}{_{{\color{Gray}\!\scriptscriptstyle K}}}
\nc{\dual}{\Delta}
\nc{\ra}{\rightarrow}
\nc{\xra}{\xrightarrow}
\nc{\C}{\mathbb{C}} 
\nc{\Cont}{\mathrm{C}} 
\nc{\KK}{\mathsf{KK}}
\nc{\Modules}{\mathsf{Mod}}
\nc{\Alg}{\mathsf{Alg}}
\nc{\Sep}{\mathsf{Sep}}
\begin{document}


\title[Grothendieck-Neeman duality]{Grothendieck-Neeman duality\\ and the Wirthm\"uller isomorphism}
\author{Paul Balmer}
\author{Ivo Dell'Ambrogio}
\author{Beren Sanders}
\date{September 28, 2015}

\address{Paul Balmer, Mathematics Department, UCLA, Los Angeles, CA 90095-1555, USA}
\email{balmer@math.ucla.edu}
\urladdr{http://www.math.ucla.edu/$\sim$balmer}

\address{Ivo Dell'Ambrogio, Laboratoire de Math\'ematiques Paul Painlev\'e, Universit\'e de \break Lille~1, Cit\'e Scientifique -- B\^at.~M2, 59665 Villeneuve-d'Ascq Cedex, France}
\email{ivo.dellambrogio@math.univ-lille1.fr}
\urladdr{http://math.univ-lille1.fr/$\sim$dellambr}

\address{Beren Sanders, Department of Mathematical Sciences, University of Copenhagen, Universitetsparken 5, 2100 Copenhagen {\O}, Denmark}
\email{sanders@math.ku.dk}
\urladdr{http://beren.blogs.ku.dk}

\begin{abstract}
We clarify the relationship between Gro\-then\-dieck duality \`a la Neeman and the Wirthm\"uller isomorphism \`a la Fausk-Hu-May.  We exhibit an interesting pattern of symmetry in the existence of adjoint functors between compactly generated tensor-triangulated categories, which leads to a surprising trichotomy: There exist either exactly three adjoints, exactly five, or infinitely many.  We highlight the importance of so-called \emph{relative dualizing objects} and explain how they give rise to dualities on canonical subcategories.  This yields a duality theory rich enough to capture the main features of Grothendieck duality in algebraic geometry, of generalized Pontryagin-Matlis duality \`a la Dwyer-Greenless-Iyengar in the theory of ring spectra, and of Brown-Comenetz duality \`a la Neeman in stable homotopy theory.
\end{abstract}

\subjclass[2010]{18E30; 14F05, 55U35}
\keywords{Grothendieck duality, ur-Wirthm\"uller, dualizing object, adjoints, compactly generated triangulated category, Serre functor}

\thanks{First-named author partially supported by NSF grant~DMS-1303073.}
\thanks{Second-named author partially supported by the Labex CEMPI (ANR-11-LABX-0007-01)}
\thanks{Third-named author partially supported by the Danish National Research Foundation through the Centre for Symmetry and Deformation (DNRF92)}

\maketitle


\vskip-\baselineskip\vskip-\baselineskip
\tableofcontents
\vskip-\baselineskip\vskip-\baselineskip\vskip-\baselineskip

\section{Introduction and statement of results}
\medskip

\subsection*{A tale of adjoint functors}
Consider a tensor-exact functor $f^*: \cat D\to \cat C$ between tensor-triangulated categories. As the notation~$f^*$ suggests, one typically obtains such functors by pulling-back representations, sheaves, spectra, etc., along some suitable ``underlying" map $f:X\to Y$ of groups, spaces, schemes, etc. (The actual underlying map~$f$ is not relevant for our discussion. Moreover, our choice of notation~$f^*$ is geometric in spirit, \ie dual to the ring-theoretic one; see Example~\ref{Exa:AG_affine}.) We are interested in the existence of adjoints to~$f^*$ and of further adjoints to these adjoints, and so on:
\begin{equation} \label{Eq:five_adjoints}
\vcenter{\xymatrix{
\cat C
  \ar@<-60pt>@{}[d]|-{\cdots}
  \ar@<-40pt>@{<-}[d]
  \ar@<-20pt>[d]
  \ar@{<-}[d]^-{\!\Displ f^{*}}
  \ar@<22pt>[d]^-{\Displ f_*}
  \ar@<45pt>@{<-}[d]
  \ar@<67pt>[d]
  \ar@<85pt>@{}[d]|-{\cdots}
    \\
\cat D
}}
\end{equation}
Such questions arise in examples because certain geometric properties of the underlying $f:X\to Y$ can sometimes be translated into the existence, or into properties, of such adjoints. This is illustrated for instance in Neeman's approach to Grothendieck duality~\cite{Neeman96}. Our main motivation is to provide a systematic treatment of these adjoints in the context of compactly generated categories, while simultaneously clarifying the relationship between so-called Wirthm\"uller isomorphisms and Grothen\-dieck duality. In that respect, our work is a continuation of Fausk-Hu-May~\cite{FauskHuMay03}. It turns out that the more adjoints exist, the more strongly related they must be to each other. Also remarkable is the existence of a tipping point after which there must exist infinitely many adjoints on both sides. This will happen for instance as soon as we have the six consecutive adjoints pictured in~\eqref{Eq:five_adjoints} above.

\smallbreak

Let us be more precise. Here is our basic set-up:

\begin{Hyp}
\label{hyp:base}%
Throughout the paper, we assume that both tensor-triangulated categories~$\cat C$ and~$\cat D$ are \emph{rigidly-compactly generated}. See Section~\ref{se:Brown} for details. In short, this means that $\cat C$ admits arbitrary coproducts, its compact objects coincide with the rigid objects (\aka the strongly dualizable objects) and $\cat C$ is generated by a \emph{set} of those rigid-compacts; and similarly for~$\cat D$. Such categories are the standard ``big" tensor-triangulated categories in common use in algebra, geometry and homotopy theory. They are the \emph{unital algebraic stable homotopy categories} of~\cite{HoveyPalmieriStrickland97} (with ``algebraic" understood broadly since it includes, for example, the topological stable homotopy category~$\SH$). See Examples~\ref{exa:schemes}--\ref{exa:BraveNew}.

Moreover, we assume that $f^*: \cat D\to \cat C$ is a tensor-exact functor (\ie strong symmetric monoidal and triangulated) \emph{which preserves arbitrary coproducts}. These hypotheses are quite natural and cover standard situations; see Examples~\ref{exa:alg_geom_general}--\ref{Exa:Brave_New} and~\ref{Exa:eqhtpy}--\ref{Exa:mothtpy}. (Such~$f^*$ are called \emph{geometric functors} in~\cite[Def.\ 3.4.1]{HoveyPalmieriStrickland97}.)
\end{Hyp}

By Neeman's Brown Representability Theorem, these basic hypotheses already imply the existence of two layers of adjoints to the right of the given~$f^*:\cat D\to \cat C$.

\begin{Thm}[Cor.\,\ref{cor:base}]
\label{thm:base}%
Under Hypothesis~\ref{hyp:base}, the functor $f^*: \cat D\to \cat C$ admits a right adjoint $f_*:\cat C\to \cat D$, which itself admits a right adjoint $f\uu{1}:\cat D\to \cat C$.
Moreover, we have a projection formula $d \otimes f_*(c) \cong f_*(f^*(d)\otimes c)$ and a couple of other relations detailed in Proposition~\ref{Prop:right_proj_formula}.
\end{Thm}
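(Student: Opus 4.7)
The plan is to apply Neeman's Brown Representability Theorem for the target twice. Recall that this theorem asserts that any coproduct-preserving exact functor out of a compactly generated triangulated category admits a right adjoint. Applied to $f^*$, which is triangulated and coproduct-preserving by Hypothesis~\ref{hyp:base}, and whose domain $\cat D$ is compactly (even rigidly-compactly) generated, this immediately yields the right adjoint $f_* : \cat C \to \cat D$.

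To iterate and obtain $f\uu{1}$, it remains to show that $f_*$ itself preserves arbitrary coproducts. Once this is established, a second application of Brown representability, now to $f_* : \cat C \to \cat D$ with $\cat C$ playing the role of the compactly generated source, will deliver $f\uu{1}$. I would use the standard implication: if $f^*$ sends compact objects to compact objects, then $f_*$ preserves coproducts, since for compact $d \in \cat D$ and a family $(c_i)$ in $\cat C$ one has natural isomorphisms
\[
\Hom(d, f_* {\textstyle\bigoplus_i} c_i) \cong \Hom(f^*d, {\textstyle\bigoplus_i} c_i) \cong {\textstyle\bigoplus_i} \Hom(f^*d, c_i) \cong {\textstyle\bigoplus_i}\Hom(d, f_* c_i)
\]
via the $(f^*,f_*)$-adjunction and compactness of $f^*d$; so the canonical comparison $\bigoplus_i f_* c_i \to f_*\bigoplus_i c_i$ becomes an isomorphism by Yoneda against the compact generators of $\cat D$. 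In our rigidly-compactly generated setting, $f^*$ does preserve compactness automatically: compacts in $\cat D$ coincide with the rigid objects; strong symmetric monoidality of $f^*$ preserves duality data and hence rigidity; and rigid objects in $\cat C$ are in turn compact.

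For the projection formula, I define a natural comparison
\[
 \pi_{d,c} : d \otimes f_*(c) \longrightarrow f_*\bigl(f^*(d) \otimes c\bigr)
\]
as the mate under the $(f^*, f_*)$-adjunction of the composite $f^*(d \otimes f_*(c)) \cong f^*(d) \otimes f^*f_*(c) \xra{1 \otimes \epsilon} f^*(d) \otimes c$, using strong monoidality of $f^*$ and the counit $\epsilon : f^*f_* \to \Idcat{C}$. Fixing $c$, both functors of $d$ on either side of $\pi_{d,c}$ are triangulated and preserve coproducts (for the right-hand side, this uses that $f^*$, $\otimes$ and $f_*$ each do). Hence it suffices to check that $\pi_{d,c}$ is invertible on a set of rigid-compact generators of $\cat D$. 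For rigid $d$, strong monoidality gives $f^*(d^\vee) \cong (f^*d)^\vee$, and a Yoneda argument combining the $(f^*,f_*)$-adjunction with strong monoidality yields the natural isomorphism $f_* \ihom(f^*a, c) \cong \ihom(a, f_* c)$. Combining these with the standard identity $x \otimes (-) \cong \ihom(x^\vee, -)$ for rigid $x$ produces the chain
\[
d \otimes f_*c \;\cong\; \ihom(d^\vee, f_* c) \;\cong\; f_*\ihom(f^*d^\vee, c) \;\cong\; f_*\ihom((f^*d)^\vee, c) \;\cong\; f_*(f^*d \otimes c),
\]
which one checks coincides with $\pi_{d,c}$.

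The main obstacle is the second adjoint step, and it is there that the rigidity part of Hypothesis~\ref{hyp:base} does essential work: without the coincidence of compact and rigid objects in $\cat C$, strong monoidality of $f^*$ would not force it to preserve compactness, and the second Brown representability argument would collapse.
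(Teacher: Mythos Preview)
Your proposal is correct and follows essentially the same route as the paper: Brown representability gives $f_*$; the symmetric monoidal functor $f^*$ preserves rigids and hence compacts, so $f_*$ preserves coproducts and a second application of Brown representability yields $f\uu{1}$; and the projection formula is established by defining the canonical comparison $\pi$, verifying it on rigid $d$, and extending by compact generation. The only cosmetic difference is that where the paper cites \cite[Prop.\,3.2]{FauskHuMay03} for the rigid case of $\pi$, you instead unfold it via the internal-hom adjunction $f_*\ihomcat{C}(f^*a,c)\cong\ihomcat{D}(a,f_*c)$ (which, as you note, follows directly from monoidality of $f^*$ and is independent of the projection formula), together with $x\otimes(-)\cong\ihom(x^\vee,-)$ for rigid~$x$; this is a perfectly valid alternative verification of the same step.
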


In other words, we get $f^*\adj f_*\adj f\uu{1}$ essentially ``for free''\!. This includes the unconditional existence of a special object that we want to single out:
\begin{Def}
\label{def:DO}%
Writing $\unit$ for the $\otimes$-unit, the object~$\DO:=f\uu{1}(\unit)$ in~$\cat C$ will be called the \emph{relative dualizing object} (for $f^*:\cat D\to \cat C$) in reference to the dualizing complexes of algebraic geometry; see~\cite{Lipman09} and~\cite{Neeman96,Neeman10}. This object $\DO$ of~$\cat C$ is uniquely characterized by the existence of a natural isomorphism
\begin{equation}
\label{eq:DO}%
\Homcat{D}(f_*(-),\unit)\cong\Homcat{C}(-,\DO).
\end{equation}
Equivalently, $\DO$ is characterized by the existence of a natural isomorphism
\begin{equation}
\label{eq:i-DO}%
\ihomcat{D}(f_*(-),\unit)\cong f_*\,\ihomcat{C}(-,\DO),
\end{equation}
where $\ihomcat{C}$ and $\ihomcat{D}$ are the internal hom functors on~$\cat C$ and~$\cat D$ respectively.
In other words, $\DO$ allows us to describe the usual (untwisted) dual $\dual:=\ihom(-,\unit)$ of the direct image $f_*$ as the direct image of the $\DO$-twisted dual~$\dual_{\DO}:=\ihom(-,\DO)$.
\end{Def}

\smallbreak

Armed with this object~$\DO\in\cat C$, we return to our three functors~$f^*\adj f_*\adj f\uu{1}$. We prove that the existence of one more adjoint on either side forces adjoints on both sides~$f\dd{1}\adj f^*\adj f_*\adj f\uu{1}\adj f\dd{-1}$ and strong relations between these five functors. This is one of the main clarifications of the paper.

\begin{Thm} [Grothendieck-Neeman Duality, Theorem~\ref{thm:GN}] \label{thm:GN-intro}
Let $f^*: \cat D\to \cat C$ be as in our basic Hypothesis~\ref{hyp:base} and consider the automatic adjoints $f^*\adj f_*\adj f\uu{1}$ (Thm.\,\ref{thm:base}).
Then the following conditions are equivalent:
\begin{enumerate}[\indent\rm(GN1)]
\smallbreak
\item
\label{it:Groth}%
\emph{Grothendieck duality:}
There is a natural isomorphism
%
$$
\DO \otimes f^*(-) \cong f\uu{1}(-).
$$
%
\smallbreak
\item\label{it:Neeman}
	\emph{Neeman's criterion:} The functor $f_*$ preserves compact objects, or equivalently its right adjoint $f\uu{1}$ preserves coproducts, or equivalently by Brown Representability $f\uu{1}$ admits a right adjoint~$f\dd{-1}$.
\smallbreak
\item\label{it:new}
The original functor $f^*:\cat D\to \cat C$ preserves products, or equivalently by Brown Representability $f^*$ admits a left adjoint $f\dd{1}$.
\end{enumerate}
\smallbreak
\noindent Moreover, when these conditions hold, the five functors $f\dd{1}\adj f^*\adj f_*\adj f\uu{1}\adj f\dd{-1}$
$$
\xymatrix@R=3em{
\cat C
  \ar@{->}@<-15pt>[d]   _-{\!\Displ f\dd{1}}
  \ar@{<-}[d]|-{\!\Displ f^{*}\vphantom{f^f_f}}
  \ar@<15pt>[d]|-{\Displ f_*\vphantom{f^f_f}}
  \ar@<30pt>@{<-}[d]|-{\Displ f\uu{1}\!\!\!\vphantom{f^f_f}}
  \ar@{->}@<45pt>[d]^-{\!\Displ f\dd{-1}}
    \\
\cat D
}
$$
are related by an armada of canonical isomorphisms, detailed in Theorem~\ref{thm:GN} and Example~\ref{Rem:computer}. Most notably, we have what we call the \emph{ur-Wirthm\"uller isomorphism}
\begin{equation}
\label{eq:ur-Wirth}%
f\dd{1}(-) \cong f_*(\DO\otimes -)
\end{equation}
and we have a canonical isomorphism $\unit_{\cat C} \cong \ihomcat{C}(\DO,\DO)$.
\end{Thm}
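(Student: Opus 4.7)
The plan is to establish the circular chain (GN\ref{it:Groth}) $\Rightarrow$ (GN\ref{it:Neeman}) $\Rightarrow$ (GN\ref{it:new}) $\Rightarrow$ (GN\ref{it:Neeman}) $\Rightarrow$ (GN\ref{it:Groth}), after first handling the equivalences \emph{within} conditions (GN\ref{it:Neeman}) and (GN\ref{it:new}). In each case, the equivalence between ``$f_*$ preserves compacts'' and ``$f\uu{1}$ preserves coproducts'' is the standard adjoint duality, and the further equivalence with the existence of a right (resp.\ left) adjoint comes from Neeman's Brown representability applied to $\cat C$ (resp.\ to $\cat D$).

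The implication (GN\ref{it:Groth}) $\Rightarrow$ (GN\ref{it:Neeman}) is immediate since $\DO \otimes f^*(-)$ is manifestly coproduct-preserving. For the converse, I would build a natural transformation $\alpha_d \colon \DO \otimes f^*(d) \to f\uu{1}(d)$ as the mate, under $f_* \adj f\uu{1}$, of the composite $f_*(\DO \otimes f^*(d)) \cong f_*(\DO) \otimes d \xra{\eps \otimes d} d$, using the projection formula and the counit $\eps \colon f_* f\uu{1}(\unit) \to \unit$. To verify that $\alpha_d$ is an isomorphism for \emph{rigid} $d$, I would run the direct Yoneda computation
$$\Homcat{C}(c, f\uu{1}(d)) \cong \Homcat{D}(f_*(c) \otimes d^{\vee}, \unit) \cong \Homcat{C}(c \otimes f^*(d^{\vee}), \DO) \cong \Homcat{C}(c, \DO \otimes f^*(d))$$
combining the adjunction $f_* \adj f\uu{1}$, rigidity of $d$, the projection formula for $f_*$, the defining property~\eqref{eq:DO} of $\DO$, and rigidity of $f^*(d)$. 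Under (GN\ref{it:Neeman}) the functor $f\uu{1}$ preserves coproducts, so both sides of $\alpha$ become coproduct-preserving exact functors of $d$ agreeing on the compact-rigid generators of $\cat D$; hence $\alpha$ is globally an isomorphism.

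For (GN\ref{it:Neeman}) $\Rightarrow$ (GN\ref{it:new}), the dual computation for rigid $c \in \cat C$ and any $d \in \cat D$ reads
$$\Homcat{C}(c, f^*(d)) \cong \Homcat{C}(\unit, c^{\vee} \otimes f^*(d)) \cong \Homcat{D}(\unit, f_*(c^{\vee}) \otimes d),$$
using $f^*(\unit_{\cat D}) = \unit_{\cat C}$ and the projection formula. Under (GN\ref{it:Neeman}), $f_*(c^{\vee})$ is compact hence rigid, so the last term rewrites as $\Homcat{D}(f_*(c^{\vee})^{\vee}, d)$; representability on compact generators forces $f^*$ to preserve products, giving (GN\ref{it:new}) with $f\dd{1}(c) = f_*(c^{\vee})^{\vee}$ on compacts. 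The converse (GN\ref{it:new}) $\Rightarrow$ (GN\ref{it:Neeman}) is the step I expect to be the main obstacle: under (GN\ref{it:new}) alone the same computation still supplies $\Homcat{D}(f\dd{1}(c), d) \cong \Homcat{D}(\unit, f_*(c^{\vee}) \otimes d)$ for rigid $c$ and arbitrary $d$. Since $f\dd{1}$ preserves compacts (because $f^*$ preserves coproducts), $f\dd{1}(c)$ is itself rigid; for rigid $d$, duality rewrites both sides and yields $\Homcat{D}(d^{\vee}, f\dd{1}(c)^{\vee}) \cong \Homcat{D}(d^{\vee}, f_*(c^{\vee}))$. Letting $d$ range over the compact-rigid generators and invoking Yoneda gives the crucial isomorphism $f_*(c^{\vee}) \cong f\dd{1}(c)^{\vee}$, which is therefore compact, proving (GN\ref{it:Neeman}).

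With the three conditions now equivalent, I would derive the remaining identities from (GN\ref{it:Groth}). The ur-Wirthm\"uller formula $f\dd{1}(-) \cong f_*(\DO \otimes -)$ holds on rigid $c$ by a parallel Yoneda identification of both sides with $f_*(c^{\vee})^{\vee}$, and extends to all $c$ since both sides preserve coproducts ($f_*$ does because $f^*$ preserves rigid = compact). The identity $\unit \cong \ihomcat{C}(\DO, \DO)$ then follows from the natural chain
$$\Homcat{C}(x, \ihomcat{C}(\DO, \DO)) \cong \Homcat{C}(x \otimes \DO, f\uu{1}(\unit)) \cong \Homcat{D}(f_*(x \otimes \DO), \unit) \cong \Homcat{D}(f\dd{1}(x), \unit) \cong \Homcat{C}(x, \unit),$$
combining the $\otimes$-$\ihom$ adjunction with (GN\ref{it:Groth}), the adjunction $f_* \adj f\uu{1}$, the ur-Wirthm\"uller identity, and $f\dd{1} \adj f^*$. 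The other canonical isomorphisms relating the five functors can be assembled from these ingredients together with uniqueness of adjoints.
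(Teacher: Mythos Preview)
Your approach is essentially the paper's: the crucial implications (GN\ref{it:new})$\Rightarrow$(GN\ref{it:Neeman}) and (GN\ref{it:Neeman})$\Rightarrow$(GN\ref{it:new}) are exactly the content of the paper's Lemma~\ref{Lem:f_*-cpts} and Lemma~\ref{Lem:trick_with_adjoints}\,\eqref{it:left-trick}, both hinging on the fact that the duality $\dual$ on compacts intertwines $f^*$ with itself and swaps left/right adjoints, so that $\dual f\dd{1}\dual \cong f_*$ and $\dual f_*\dual \cong f\dd{1}$ on compacts. Your Yoneda computations spell this out concretely. One small caution: when you write ``invoking Yoneda gives $f_*(c^\vee)\cong f\dd{1}(c)^\vee$'', you have only matched $\Hom_{\cat D}(e,-)$ for compact~$e$; you still need to produce an actual map in~$\cat D$ and then use compact generation to conclude it is an isomorphism (this is how the paper's Lemma~\ref{Lem:trick_with_adjoints} is phrased).

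The more substantive gap is in your ur-Wirthm\"uller argument. Saying that both $f\dd{1}(c)$ and $f_*(\DO\otimes c)$ are naturally isomorphic to $f_*(c^\vee)^\vee$ for compact~$c$ gives you a natural isomorphism on~$\cat C^c$, but to extend to all of~$\cat C$ via compact generation (Remark~\ref{rem:comp-gen}) you need a natural transformation \emph{defined on all of~$\cat C$} which you then check is an isomorphism on compacts. The paper constructs such a map explicitly (equation~\eqref{eq:explicit-ur-W}) and then undertakes what it calls an ``adventurous diagram chase'' to verify that this global map agrees with the Yoneda-derived isomorphism on compacts. The same issue arises, in milder form, in your (GN\ref{it:Neeman})$\Rightarrow$(GN\ref{it:Groth}) step: you construct~$\alpha_d$ globally, but you do not verify that your Yoneda computation is post-composition by~$\alpha_d$---the paper flags this as a ``tedious but straightforward'' check in Proposition~\ref{Prop:midstep}. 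These verifications are routine but cannot be skipped.
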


The equivalence between~(GN\,\ref{it:Groth}) and~(GN\,\ref{it:Neeman}) was established by Neeman~\cite{Neeman96}. We name the theorem after him since he has been the main architect of compactly generated categories and since several of our techniques have been pioneered by him, in the algebro-geometric context, like in~\cite{Neeman10}. Our main input is to show that Grothendieck-Neeman duality can be detected on the original functor~$f^*$, namely by the property~(GN\,\ref{it:new}) that $f^*$ preserves products. In other words, the existence of Neeman's right adjoint~$f\dd{-1}$ on the far-right is equivalent to the existence of a left adjoint $f\dd{1}$ four steps to the left. Our Lemma~\ref{Lem:trick_with_adjoints} is the tool which allows us to move from left to right via the duality on the subcategory of compact objects. This lemma is the key to the proof of the new implication~(GN\,\ref{it:new})$\then$(GN\,\ref{it:Neeman}) above and appears again in the proof of Theorem~\ref{thm:Wirthmueller} below.

The ur-Wirthm\"uller formula~\eqref{eq:ur-Wirth} plays a fundamental role in our approach and connects with similar formulas in~\cite{FauskHuMay03}, as discussed in Remark~\ref{rem:Wirth} below.
In algebraic geometry, an isomorphism as in~\eqref{eq:ur-Wirth} is mentioned in~\cite[Rem.\,4.3]{Neeman10}.

\smallskip

Our Grothendieck-Neeman Duality Theorem~\ref{thm:GN-intro} leaves one question open, made very tempting by the isomorphism $\ihom(\DO,\DO)\cong\unit$: When is the relative dualizing object~$\DO$ $\otimes$-invertible? Amusingly, this is related to another layer of adjoints, on either side of $f\dd{1}\adj f^*\adj f_*\adj f\uu{1}\adj f\dd{-1}$. We reach here the tipping point from which infinitely many adjoints must exist on both sides.

\begin{Thm}[Wirthm\"uller Isomorphism; see Section~\ref{se:Wirth}]
\label{thm:Wirthmueller}%
Suppose that we have the five adjoints $f\dd{1}\dashv f^*\dashv f_*\dashv f\uu{1}\dashv f\dd{-1}$ of Grothendieck-Neeman duality (Thm.\,\ref{thm:GN-intro}). Then the following conditions are equivalent:
\begin{enumerate}[\indent\rm({W}1)]
\smallbreak
\item \label{it:far-left}%
	The left-most functor $f\dd{1}$ admits itself a left adjoint, or equivalently by Brown Representability it preserves arbitrary products.
\smallbreak
\item \label{it:far-right}%
	The right-most functor $f\dd{-1}$ admits itself a right adjoint, or equivalently by Brown Representability it preserves arbitrary coproducts, or equivalently its left adjoint $f\uu{1}$ preserves compact objects.
\smallbreak
\item \label{it:w-cpt}
The relative dualizing object $\DO$ (Def.\,\ref{def:DO}) is a compact object of~$\cat C$.
\smallbreak
\item \label{it:w-inv}%
The relative dualizing object $\DO$ is $\otimes$-invertible in~$\cat C$.
\smallbreak
\item \label{it:Wirth}%

There exists a \emph{(strong) Wirthm\"uller isomorphism} between $f_*$ and $f\dd{1}$; that is, there exists a $\otimes$-invertible object $\omega\in\cat C$ such that $f\dd{1}\cong f_*(\omega\otimes-)$, or equivalently such that $f_*\cong f\dd{1}(\omega\inv\otimes-)$.
\smallbreak
\item \label{it:ad-lib}
There exists an infinite tower of adjoints on both sides:
\[\vcenter{\xymatrix{
\cat C
  \ar@<-130pt>@{}[d]|-{\cdots}
  \ar@<-120pt>@{<-}[d]^-{\Displ f\uu{-n}\!}
  \ar@<-90pt>@{->}[d]^-{\Displ f\dd{n}\!}
  \ar@<-65pt>@{}[d]|-{\cdots}
  \ar@<-54pt>@{<-}[d]^-{\Displ f\uu{-1}\!}
  \ar@<-27pt>@{->}[d]^-{\Displ f\dd{1}\!}
 \ar@{<-}[d]^(.43){\Displ f^{*}}
  \ar@<27pt>@{->}[d]^-{\!\Displ f_*}
  \ar@<52pt>@{<-}[d]^-{\!\Displ f\uu{1}}
  \ar@<77pt>@{->}[d]^-{\!\Displ f\dd{-1}}
  \ar@<105pt>@{<-}[d]^-{\!\Displ f\uu{2}}
  \ar@<122pt>@{}[d]^-{\cdots}
  \ar@<140pt>@{<-}[d]^-{\!\Displ f\uu{n}}
  \ar@<165pt>@{->}[d]^-{\!\Displ f\dd{-n}}
  \ar@<195pt>@{}[d]|-{\cdots}
    \\
\cat D
}}
\]
which necessarily preserve all coproducts, products and compact objects.
\end{enumerate}
Moreover, when these conditions hold, the tower of adjoints appearing in~(W\ref{it:ad-lib}) is necessarily given for all $n\in \bbZ$ by the formulas
\begin{equation}
\label{eq:twists}%
f\uu{n}=\DO\potimes{n}\otimes f^*
\qquadtext{and}
f\dd{n}=f_*(\DO\potimes{n}\otimes-)\,.
\end{equation}
Finally, (W\ref{it:far-left})-(W\ref{it:ad-lib}) hold true as soon as the functor~$f_*:\cat C\to \cat D$ satisfies, in addition to Grothendieck-Neeman duality, any one of the following three properties:
\begin{enumerate}[\indent\rm(1)]
\item The functor $f_*$ is faithful (\ie $f^*$ is surjective up to direct summands).
\item The functor $f_*$ detects compact objects: any $x\in \cat C$ is compact if $f_*(x)$ is.
\item \label{it:rel-cpt}
Any $x\in \cat C$ is compact if $f_*(x\otimes y)$ is compact for every compact $y\in\cat C$.
\end{enumerate}
(These conditions are ordered in increasing generality, for (1)$\Rightarrow$(2)$\Rightarrow$(3).)
\end{Thm}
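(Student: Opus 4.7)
The plan is to establish the equivalence (W1)--(W5) by a cycle of implications, treat (W6) separately by explicit construction of the tower, and then deduce the sufficient conditions (1)--(3). The running tools are the ur-Wirthm\"uller isomorphism $f\dd{1}\cong f_*(\DO\otimes-)$, Grothendieck duality $f\uu{1}\cong\DO\otimes f^*$, the identity $\ihomcat{C}(\DO,\DO)\cong\unit_{\cat C}$ supplied by Theorem~\ref{thm:GN-intro}, Brown Representability to convert ``has an adjoint'' into ``preserves products/coproducts'', and the structural fact that in a rigidly-compactly generated tt-category, compact equals rigid equals dualizable.

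The routine implications I dispatch first. For (W\ref{it:w-inv})$\Leftrightarrow$(W\ref{it:w-cpt}): an invertible object is automatically rigid, hence compact; conversely, compact $\DO$ is rigid, so $\DO^{\vee}\otimes\DO\cong\ihomcat{C}(\DO,\DO)\cong\unit$ makes $\DO$ invertible with inverse $\DO^{\vee}$. For (W\ref{it:w-cpt})$\Leftrightarrow$(W\ref{it:far-right}): using $f\uu{1}\cong\DO\otimes f^*$ and the fact that the strong monoidal $f^*$ preserves compacts, $f\uu{1}$ preserves compacts iff $\DO$ is compact (the forward direction applies $f\uu{1}$ to $\unit_{\cat D}$). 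For (W\ref{it:w-inv})$\Rightarrow$(W\ref{it:Wirth}): take $\omega=\DO$ in the ur-Wirthm\"uller. For (W\ref{it:Wirth})$\Rightarrow$(W\ref{it:far-left}): when $\omega$ is invertible, $f_*(\omega\otimes-)$ admits $\omega\inv\otimes f^*(-)$ as a left adjoint.

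The main difficulty is (W\ref{it:far-left})$\Rightarrow$(W\ref{it:far-right}), the hinge that tips the adjoint chain from five to infinitely many. My approach is to mirror the novel implication (GN\ref{it:new})$\Rightarrow$(GN\ref{it:Neeman}) from Theorem~\ref{thm:GN-intro}, invoking the same technical device Lemma~\ref{Lem:trick_with_adjoints}, which transports adjoint information from one end of the chain to the other via the duality on compact subcategories. Concretely, (W\ref{it:far-left}) adjoins a sixth functor to the left of the five-term chain $f\dd{1}\dashv f^*\dashv f_*\dashv f\uu{1}\dashv f\dd{-1}$, and the lemma converts this into a sixth adjoint on the right, namely a right adjoint to $f\dd{-1}$, which is (W\ref{it:far-right}). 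I expect this step to be where essentially all the real work lies.

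For (W\ref{it:ad-lib}): the implication (W\ref{it:ad-lib})$\Rightarrow$(W\ref{it:far-left}) is immediate. Conversely, assuming (W\ref{it:w-inv}), define the candidate tower by $f\uu{n}:=\DO\potimes{n}\otimes f^*$ and $f\dd{n}:=f_*(\DO\potimes{n}\otimes-)$ for every $n\in\bbZ$, and use invertibility of $\DO$ together with Grothendieck duality and the projection formula to verify the adjunctions $f\dd{n+1}\dashv f\uu{n}\dashv f\dd{n}$ directly; this establishes (W\ref{it:ad-lib}), uniqueness of adjoints pins down the formulas~\eqref{eq:twists}, and preservation of products, coproducts and compacts is automatic from the existence of adjoints on both sides. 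Finally, for the sufficient conditions (1)--(3), the chain (1)$\Rightarrow$(2)$\Rightarrow$(3) is formal: take $y=\unit$ for (2)$\Rightarrow$(3), while (1)$\Rightarrow$(2) follows from a faithful $f_*$ making every object of $\cat C$ a direct summand of $f^*f_*(-)$, so $f_*(x)$ compact gives $x$ compact (as summands of compacts are compact). For the final step (3)$\Rightarrow$(W\ref{it:w-cpt}), note that $f\dd{1}$ preserves compact objects, being the left adjoint of the coproduct-preserving $f^*$; hence $f_*(\DO\otimes y)=f\dd{1}(y)$ is compact for every compact $y\in\cat C$, and applying the hypothesis~(3) to $x=\DO$ yields compactness of $\DO$, closing the argument.
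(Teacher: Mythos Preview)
Your plan is essentially the paper's own proof: the same cycle of implications, the same identification of (W\ref{it:far-left})$\Rightarrow$(W\ref{it:far-right}) as the crux, the same appeal to Lemma~\ref{Lem:trick_with_adjoints}, and the same treatment of the sufficient conditions (1)--(3) via the ur-Wirthm\"uller formula. Two points deserve attention.

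First, there is a bookkeeping slip in your tower. With $f\uu{n}=\DO\potimes{n}\otimes f^*$ and $f\dd{n}=f_*(\DO\potimes{n}\otimes-)$, the right adjoint of $f\uu{n}=(\DO\potimes{n}\otimes-)\circ f^*$ is $f_*\circ(\DO\potimes{-n}\otimes-)=f\dd{-n}$, not $f\dd{n}$; so the correct adjunctions read $f\uu{n}\dashv f\dd{-n}\dashv f\uu{n+1}$ (equivalently $f\dd{1-n}\dashv f\uu{n}\dashv f\dd{-n}$), as in the paper's~\eqref{eq:twists}. Your stated $f\dd{n+1}\dashv f\uu{n}\dashv f\dd{n}$ fails already at $n=1$. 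This is purely an indexing error and does not affect the strategy.

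Second, your sketch of (W\ref{it:far-left})$\Rightarrow$(W\ref{it:far-right}) is correct in spirit but the mechanism should be spelled out. The paper argues as follows: with six adjoints $f\uu{-1}\dashv f\dd{1}\dashv f^*\dashv f_*\dashv f\uu{1}\dashv f\dd{-1}$, the four leftmost preserve compacts (each has a two-fold right adjoint, hence its right adjoint preserves coproducts, Proposition~\ref{prop:cpt_coprod}); restricting to compacts gives four consecutive adjoints $f\uu{-1}|_{\cat D^c}\dashv f\dd{1}|_{\cat C^c}\dashv f^*|_{\cat D^c}\dashv f_*|_{\cat C^c}$; since $\dual f^*\cong f^*\dual$ on rigids, conjugating the leftmost pair by $\dual$ produces a right adjoint $\dual(f\uu{-1}|_{\cat D^c})\dual$ to $f_*|_{\cat C^c}$; then Lemma~\ref{Lem:trick_with_adjoints}\,\eqref{it:right-trick} applied to $F=f_*$ shows $f\uu{1}$ preserves compacts. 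Your one-line summary ``the lemma converts this into a sixth adjoint on the right'' elides the crucial passage through duality on compacts that makes Lemma~\ref{Lem:trick_with_adjoints} applicable.
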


\begin{Rem}
\label{rem:notation}%
We opted for the notation $f\uu{n}\adj f\dd{-n} \adj f\uu{n+1}$ after trying everything else. As is well-known, notations of the form $f^!$, $f_!$, $f^\times$, $f_\#$, etc., have flourished in various settings, sometimes with contradictory meanings.  Instead of risking collision, we propose a systematic notation which allows for an infinite tower of adjoints, following the tradition that $f\uu{n}$ is numbered with $n$ going up $\cdots f\uu{n}, f\uu{n+1}\cdots$ and $f\dd{n}$ with $n$ going down $\cdots f\dd{n}, f\dd{n-1}\cdots$. Our notation also recalls that $f\uu{n}$ and $f\dd{n}$ are $n$-fold \emph{twists} of $f\uu{0}=f^*$ and $f\dd{0}=f_*$ by~$\DO$; see~\eqref{eq:twists}.
\end{Rem}

\begin{Rem}
\label{rem:Wirth}%
In the literature, Property~(W\ref{it:Wirth}) is usually simply called a Wirth\-m\"uller isomorphism, referring to the original~\cite{Wirthmueller74}. Such a strong relation between the left and right adjoints to~$f^*$ is very useful, for then $f_*$ and $f\dd{1}$ will share all properties which are stable under pre-tensoring with an invertible object (\eg, being full, faithful, etc.). Similarly, most formulas valid for one of them will easily transpose into a formula for the other one. Here, we sometimes add the adjective ``strong" to avoid collision with Fausk-Hu-May's slightly different notion of ``Wirthm\"uller context"~\cite{FauskHuMay03}; see more in Remark~\ref{rem:FHM}. Let us point out that the existence of any Wirthm\"uller isomorphism~(W\ref{it:Wirth}) is not independent of Grothendieck duality but actually requires it. This is because our new condition~(GN\,\ref{it:new}) tells us that the mere existence of the left adjoint $f\dd{1}$ forces Grothendieck duality. Furthermore, the Wirthm\"uller isomorphism itself and the twisting object~$\DO$ are borrowed from the earlier ur-Wirthm\"uller isomorphism~\eqref{eq:ur-Wirth}, since ur-Wirthm\"uller~\eqref{eq:ur-Wirth} clearly implies Wirthm\"uller~(W\ref{it:Wirth}) when $\DO$ is invertible.
\end{Rem}


In conclusion, we have the following picture:
$$
\kern-.2em
\xymatrix@C=1em{
\boxed{\ourfrac{\scriptstyle f^*\adj f_*\adj f\uu{1}}{\ourfrac{\textrm{in general}}{\textrm{(Thm.\,\ref{thm:base})}}}}
&
\boxed{\ourfrac{\scriptstyle f\dd{1}\adj f^*\adj f_*\adj f\uu{1}\adj f\dd{-1}}{\ourfrac{\textrm{Grothendieck-Neeman duality}}{\textrm{(Thm.\,\ref{thm:GN-intro})}}}}
 \ar@{=>}[l]
&
\boxed{\ourfrac{\scriptstyle \cdots \,\adj f\dd{1}\adj f^*\adj f_*\adj f\uu{1}\adj f\dd{-1}\adj\, \cdots}{\ourfrac{\textrm{Wirthm\"uller isomorphism }}{\textrm{(Thm.\,\ref{thm:Wirthmueller})}}}}
 \ar@{=>}[l]
}
$$
\begin{Cor}[Trichotomy of adjoints]
\label{cor:tricho}%
If $f^*$ is a coproduct-preserving tensor triangulated functor between rigidly-compactly generated tensor triangulated categories, then \emph{exactly one} of the following three possibilities must hold:
\begin{enumerate}[\indent\rm(1)]
\item
\label{it:gen}%
There are two adjunctions as follows and no more:
$\hphantom{f\dd{1}\!\dashv\! }f^*\dashv f_* \dashv f\uu{1}.\hphantom{ \!\dashv\! f\dd{-1}}$
\smallbreak
\item
There are four adjunctions as follows and no more:
$f\dd{1}\!\dashv\! f^*\!\dashv\! f_* \!\dashv\! f\uu{1} \!\dashv\! f\dd{-1}$.
\smallbreak
\item
\label{it:W}%
There is an infinite tower of adjunctions in both directions:
\[
\cdots f\uu{-1} \dashv
f\dd{1} \dashv
f^* \dashv
f_* \dashv
f\uu{1} \dashv
f\dd{-1}
\cdots
f\uu{n} \dashv
f\dd{-n} \dashv
f\uu{n+1}
\cdots
\]
\end{enumerate}
\end{Cor}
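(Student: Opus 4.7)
The plan is to read this corollary as a direct consequence of combining Theorems~\ref{thm:base}, \ref{thm:GN-intro} and~\ref{thm:Wirthmueller}, each of which has an ``all-or-nothing'' character: the existence of one extra adjoint at the end of the current chain automatically forces several more. The three enumerated cases have visibly distinct total numbers of adjunctions (two, four, or infinitely many) and are therefore mutually exclusive, so the content of the argument is entirely to show that they are exhaustive.

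First I would invoke Theorem~\ref{thm:base} to obtain the adjunctions $f^*\dashv f_*\dashv f\uu{1}$ unconditionally, so that every configuration is at least in case~(1). Next, suppose the chain admits at least three adjunctions, \ie at least one further adjoint either to the left of $f^*$ or to the right of $f\uu{1}$. Here I would apply the equivalence (GN\ref{it:Neeman})~$\Leftrightarrow$~(GN\ref{it:new}) of Theorem~\ref{thm:GN-intro}: the existence of a left adjoint $f\dd{1}$ to $f^*$ is equivalent to the existence of a right adjoint $f\dd{-1}$ to $f\uu{1}$, so the chain must in fact contain all four adjunctions $f\dd{1}\dashv f^*\dashv f_*\dashv f\uu{1}\dashv f\dd{-1}$, landing us at least in case~(2) and excluding any chain of exactly three adjunctions.

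Then, assuming the chain admits a fifth adjunction, I would apply Theorem~\ref{thm:Wirthmueller}: the existence of a further left adjoint to $f\dd{1}$ and the existence of a further right adjoint to $f\dd{-1}$ are exactly the conditions (W\ref{it:far-left}) and (W\ref{it:far-right}), both of which are equivalent to (W\ref{it:ad-lib}). Consequently the entire doubly infinite tower of adjoints exists, placing us in case~(3) and ruling out every chain of finite length strictly greater than four.

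Putting these three jumps together, any possible configuration must fall into exactly one of the three stated cases. I do not anticipate any genuine obstacle: the corollary is essentially a repackaging of the structural theorems above, its proof amounting to nothing more than sorting out the two ``jumps'' three $\Rightarrow$ five (Grothendieck-Neeman) and five $\Rightarrow$ infinity (Wirthm\"uller).
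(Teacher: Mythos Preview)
Your proposal is correct and matches the paper's approach: the corollary is presented as an immediate consequence of the three structural theorems (Theorems~\ref{thm:base}, \ref{thm:GN-intro}, \ref{thm:Wirthmueller}), with no separate proof given beyond the summary picture preceding it. Your explicit unpacking of the two ``jumps'' (three adjunctions $\Rightarrow$ four via (GN\ref{it:Neeman})$\Leftrightarrow$(GN\ref{it:new}), and five $\Rightarrow$ infinity via (W\ref{it:far-left})$\Leftrightarrow$(W\ref{it:far-right})$\Leftrightarrow$(W\ref{it:ad-lib})) is exactly the intended reasoning.
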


\begin{Rem}
\label{rem:warning}%
The dualizing object~$\DO$ could be invertible even in case~\eqref{it:gen} above, \ie \emph{without} Grothendieck-Neeman duality. See Example~\ref{Exa:warning}. Of course, there is no Wirthm\"uller isomorphism in such cases, since $f\dd{1}$ does not even exist, by~(GN\,\ref{it:new}).
\end{Rem}

\begin{Rem}
In case~\eqref{it:W}, the invertible object~$\DO$ can be trivial: $\DO\simeq\unit$. This happens precisely when~$f^*$ is a \emph{Frobenius functor}~\cite{Morita65}, \ie admits a simultaneous left-and-right adjoint
$f^* \dashv f_*\dashv f^*$. This is also called an \emph{ambidextrous adjunction}.
\end{Rem}

\begin{center}
*\ *\ *
\end{center}

\subsection*{Abstract Grothendieck duality}
In the literature, the phrase ``Groth\-en\-dieck duality" can refer to several different things. In its crudest form, it is the isomorphism $\DO\otimes f^*\cong f\uu{1}$ of~(GN\,\ref{it:Groth}) -- hence the name \emph{twisted inverse image} for~$f\uu{1}$. ``Grothendieck duality" can also refer to the compatibility $\dual\circ f_*  =f_* \circ \dual_{\DO}$ given in~\eqref{eq:i-DO}, between direct image $f_*$ and the two dualities~$\dual=\ihom(-,\unit)$ and $\dual_{\DO}=\ihom(-,\DO)$. However, this is usually formulated for certain proper subcategories $\cat C_0 \subset \cat C$ and $\cat D_0 \subset \cat D$ on which these duality functors earn their name by inducing equivalences $\cat C_0^{\op}\stackrel{\sim}{\to} \cat C_0$ and $\cat D_0^{\op}\stackrel{\sim}{\to} \cat D_0$. Then ``Grothendieck duality" refers to the situation where the functor $f_*:\cat C\to \cat D$ maps $\cat C_0$ to~$\cat D_0$ and intertwines the two dualities. The initial example was $\cat C_0=\Db(\coh X)$ and $\cat D_0=\Db(\coh Y)$ for a suitable morphism of schemes~$f:X\to Y$; here $X$ and $Y$ are assumed noetherian and $\Db(\coh X)$ is the bounded derived category of coherent $\cO_X$-modules. Since in general $\ihomcat{C}(-,\unit)$ might not preserve $\cat C_0$ (as in the geometric example just mentioned), one should also try to replace the naive duality $\dual=\ihom(-,\unit)$ of~\eqref{eq:i-DO} by a more friendly one, say $\dual_\kappa:=\ihom(-,\kappa)$ for some object~$\kappa\in\cat C_0$  having the property that $\ihomcat{C}(-,\kappa):\cat C_0\op\isoto \cat C_0$ is an equivalence.  In algebraic geometry, for $\cat C_0=\Db(\coh X)$, such $\kappa$ are called \emph{dualizing complexes}.

In Sections~\ref{se:Groth} and~\ref{se:rel-cpt}, we follow this approach to Grothendieck duality in our abstract setting, with an emphasis on the trichotomy of Corollary~\ref{cor:tricho}.
Let $f^*$ be a functor as in our basic Hypothesis~\ref{hyp:base}. As before, we write $\dual_\kappa= \ihom(-,\kappa)$ for the $\kappa$-twisted duality functor, for any object~$\kappa$. We prove:

\begin{enumerate}[(1)]
\item In the general situation, $f_*$ always intertwines dualities: we have
\begin{align*}
\Dk \circ f_* \cong f_* \circ \Dkk
\end{align*}
where $\kappa\in \cat D$ is any object and $\kappa':=f\uu{1}(\kappa)\in\cat C$; see~Theorem~\ref{thm:duality_preserving}.
\smallbreak
\item Assume that $f^*:\cat D\to \cat C$ satisfies Grothendieck-Neeman duality (Theorem~\ref{thm:GN-intro}).
Let $\cat D_0\subset \cat D$ be a subcategory admitting a dualizing object $\kappa\in \cat D_0$, in which case $\kappa$ induces an equivalence
$\dual_\kappa: \cat D_0^{\op}\stackrel{\sim}{\to} \cat D_0$.
Provided $\cat D_0$ is a $\cat D^c$-submodule (meaning $\cat D^c\otimes \cat D_0\subseteq \cat D_0$), the object $\kappa'= f\uu{1}(\kappa)\cong \DO\otimes f^*(\kappa)$ is dualizing for the following subcategory of~$\cat C$:
\[
\cat C_0:=\{x\in \cat C\mid f_*(c\otimes x)\in \cat D_0 \textrm{ for all } c\in \cat C^c\},
\]
that is, $\kappa'\in \cat C_0$ and
$\dual_{\kappa'}: \cat C_0^{\op}\stackrel{\sim}{\to} \cat C_0$.
Thus, by the formula in~(1), $f_*: \cat C_0\to \cat D_0$ is a morphism of categories with duality.
See Theorem~\ref{thm:rel-Groth} for details, and see Theorem~\ref{Thm:rel-rel-Groth} for a more general relative version.
\smallbreak
\item
Assume moreover that we have the Wirthm\"uller isomorphism of Theorem~\ref{thm:Wirthmueller}.
Because of the monoidal adjunction $f^*:\cat D\adjto \cat C: f_*$, we may consider $\cat C$ as an enriched category over $\cat D$, \ie we may equip $\cat C$ with Hom objects $\underline{\cat C}(x,y):= f_*\,\ihomcat{C}(x,y)$ in~$\cat D$. Then the equivalence $(-)\otimes \DO : \cat C^c \stackrel{\sim}{\to} \cat C^c$ behaves like a \emph{Serre functor relative to $\cat D$}, meaning that there is a natural isomorphism
\begin{align*}
\dual\Crich(x,y) \cong \Crich (y , x \otimes \DO)
\end{align*}
for all $x,y\in \cat C^c$, where $\dual=\ihomcat{D}(-,\unit)$ is the plain duality of~$\cat D$.
If $\cat D=\Der(\kk)$ is the derived category of a field~$\kk$, this reduces to an ordinary Serre functor on the $\kk$-linear category~$\cat C^c$.
See Theorem~\ref{Thm:Serre_duality} and Corollary~\ref{cor:Serre_duality_over_k}.
\end{enumerate}

In algebraic geometry,  we prove that if $X$ is a projective scheme over a regular noetherian base then  the category $\cat C_0$ of~(2) specializes to $\Db(\coh X)$; see Theorem~\ref{thm:cpt-pb-AG}. Thus in this case the results in (1) and (2) specialize to the classical algebro-geometric Grothendieck duality. Similarly, (3) specializes to the classical Serre duality for smooth projective varieties (cf.~Example~\ref{Exa:proj_var}).
 But of course now these results apply more generally, for instance in representation theory, equivariant stable homotopy, and so on, \emph{ad libitum}.

\begin{center}
*\ *\ *
\end{center}

\subsection*{Further examples}

Let us illustrate the broad reach of our setup with two additional examples, now taken from algebra and topology.

Still consider a tensor-exact functor $f^*: \cat D\to \cat C$ satisfying Hypothesis~\ref{hyp:base} and the adjoints $f^*\dashv f_* \dashv f\uu{1}$.  Instead of starting with a subcategory $\cat D_0 \subset \cat D$ as we did above, we may reverse direction and consider a subcategory $\cat C_0\subset\cat C$ with dualizing object~$\kappa'$ and ask under what circumstances may we ``push" the subcategory with duality $(\cat C_0,\kappa')$ along $f_*:\cat C\to \cat D$ to obtain a subcategory with duality in~$\cat D$.

We prove that if $\kappa'$ admits a ``Matlis lift'', that is, an object $\kappa\in \cat D$ such that $f\uu{1}(\kappa)\cong \kappa'$, then $\kappa$ is dualizing for the thick subcategory of $\cat D$ generated by $f_*(\cat C_0)$; see Theorem~\ref{Thm:abstract_DGI_duality}.  This result specializes to classical Matlis duality for commutative noetherian local rings; see Example~\ref{Exa:matlis}.  (For this we now allow dualizing objects to be external, \ie we only assume that $\kappa'\in \cat C$ induces an equivalence $\dual_{\kappa'}: \cat C_0\stackrel{\sim}{\to} \cat C_0$ while possibly $\kappa' \not\in \cat C_0$. Indeed, even when $\kappa'\in \cat C_0$, the lift $\kappa\in \cat D$ need not be in~$\cat D_0$; see Example~\ref{Exa:pontryagin}.) Dwyer, Greenlees and Iyengar \cite{DwyerGreenleesIyengar06} have developed a rich framework which captures several dualities in the style of Pontryagin-Matlis duality, and we show how this connects with our theory in Example~\ref{Exa:DGI}.

Finally, we conclude our article by showing that Neeman's improved version \cite{Neeman92} of Brown-Comenetz duality~\cite{BrownComenetz76} can also be expressed in our framework: It is given by the $\DO$-twisted duality $\dual_{\DO}$ for a certain tensor-exact functor $f^*$ satisfying our basic hypothesis; see Theorem~\ref{thm:NBCduality}. Interestingly, it is possible to show that this functor~$f^*$ is \emph{not} induced by any underlying map~$f$; see Remark~\ref{Rem:no_f}.

\bigbreak
\section{Brown representability and the three basic functors}
\label{se:Brown}%

We begin by recollecting some well-known definitions and results.

Perhaps the most basic fact about adjoints of exact functors on triangulated categories is that they are automatically exact; see~\cite[Lemma 5.3.6]{Neeman01}.

A triangulated category $\cat T$ is said to be \emph{compactly generated} if it admits arbitrary coproducts, and if there exists a set of compact objects $\cat G\subset \cat T$ such that $\cat T(\cat G, t)=0$ implies $ t=0$ for any $t\in \cat T$. An object $t\in \cat T$ is \emph{compact} (a.k.a.\ \emph{finite}) if the functor $\cat T(t,-): \cat T\to \Ab$ sends coproducts in~$\cat T$ to coproducts of abelian groups. We denote by $\cat T^c$ the thick subcategory of compact objects of~$\cat T$. A (contravariant) functor $\cat T\to\cat A$ to an abelian category is called \emph{(co)homological} if it sends exact triangles to exact sequences. The notion of a compactly generated category is extremely useful, thanks to the following result of~Neeman:

\begin{Thm}[Brown representability; see \cite{Neeman96,Krause02}]
\label{thm:Brown}%
Let $\cat T$ be a compactly generated triangulated category.
Then:
\begin{enumerate}[\indent\rm(a)]
\item\label{it:Brown}
A cohomological functor $\cat T\op \to \Ab$ is representable~--- i.e., is isomorphic to one of the form $\cat T(-,t)$ for some $t\in \cat T$ --- if and only if it sends coproducts in~$\cat T$ to products of abelian groups.
\item\label{it:coBrown}
A homological functor $\cat T \to \Ab$ is corepresentable~--- i.e., is isomorphic to one of the form $\cat T(t,-)$ for some $t\in \cat T$ --- if and only if it sends products in~$\cat T$ to products of abelian groups.
\end{enumerate}
\end{Thm}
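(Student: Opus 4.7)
The plan is to prove (a) by Neeman's iterative mapping-cone construction along a set of compact generators, producing the representing object as a homotopy colimit, and then to derive (b) via Krause's abelian-category detour, since $\cat T\op$ is typically not compactly generated and a direct appeal to (a) is unavailable.

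For (a), the ``only if'' direction is immediate because $\cat T(-,t)$ converts coproducts to products. For ``if'', fix a cohomological $H\colon \cat T\op \to \Ab$ sending coproducts to products and a set $\cat G$ of compact generators. I would build a tower $t_0 \to t_1 \to \cdots$ in $\cat T$ together with compatible elements $\eta_n \in H(t_n)$: set $t_0 := \coprod_{g \in \cat G}\coprod_{x \in H(g)} g$ and take $\eta_0$ from the canonical isomorphism $H(t_0) \cong \prod_{g,x} H(g)$; given $(t_n,\eta_n)$, let $K_n := \coprod_{(g,\phi)} g$ indexed by $g\in\cat G$ and $\phi \in \ker\!\big(\cat T(g,t_n) \to H(g)\big)$ (the map $\phi \mapsto H(\phi)(\eta_n)$), take the tautological $K_n\to t_n$, and set $t_{n+1} := \cone(K_n \to t_n)$. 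Applying $H$ to the defining triangle shows $\eta_n$ lifts to some $\eta_{n+1}\in H(t_{n+1})$, its pullback along $K_n \to t_n$ being zero by construction.

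Form $t := \operatorname{hocolim}_n t_n$ via the Milnor triangle $\coprod_n t_n \xra{1-\mathrm{shift}} \coprod_n t_n \to t$. Applying $H$ and invoking coproduct-to-product turns this into a Milnor exact sequence, from which the compatible family $(\eta_n)$ lifts to some $\eta \in H(t)$ (the $\lim^1$ obstruction vanishes since the transition maps of $(H(t_n))$ are surjective by construction). This $\eta$ represents a natural transformation $\tau\colon \cat T(-,t) \to H$. For each compact $g\in\cat G$ one has $\cat T(g,t) \cong \colim_n \cat T(g,t_n)$, and this colimit bijects with $H(g)$: surjectivity from stage~$0$, injectivity because each element of $\ker\!\big(\cat T(g,t_n)\to H(g)\big)$ is killed at stage $n{+}1$. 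Finally, the class of $x \in\cat T$ on which $\tau_x$ is an isomorphism is triangulated, closed under arbitrary coproducts (both sides convert $\cat T$-coproducts into $\Ab$-products), and contains~$\cat G$; being a localizing subcategory with a generating set, it is all of $\cat T$.

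For (b), ``only if'' is again immediate. The main obstacle is that $\cat T\op$ is typically not compactly generated, so (a) does not directly apply. Following Krause, I would work inside the Grothendieck abelian category $\cat A := \Mod(\cat T^c)$ of additive functors $(\cat T^c)\op \to \Ab$, together with the restricted Yoneda functor $h\colon \cat T \to \cat A$, $x\mapsto \cat T(-,x)|_{\cat T^c}$: this $h$ preserves coproducts and products, sends $\cat T^c$ to the finitely presented projectives of $\cat A$, and identifies its essential image with the pure-injective objects of $\cat A$. Given a product-preserving homological $H\colon \cat T \to \Ab$, I would extend it through $h$ to an exact functor $\hat H\colon \cat A \to \Ab$ (possible because $\cat A$ is generated by finitely presented projectives and $H$ is homological), represent $\hat H$ by a pure-injective object of $\cat A$ via Gabriel-style representability for locally finitely presented Grothendieck categories, and lift this representing object along $h$ to produce some $t \in \cat T$; pure-injectivity ensures that the identification $\cat T(t,-) \cong H$ on $\cat T^c$ extends to all of $\cat T$, since product-preserving homological functors on $\cat T$ are determined by their restriction to $\cat T^c$ once their restricted Yoneda image is pure-injective. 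The hardest step, and the reason (b) demands substantially more machinery than (a), is precisely this pure-injective lift, which is where the compact generation of $\cat T$ essentially enters.
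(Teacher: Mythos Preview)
The paper does not actually prove this theorem: it is quoted from the literature with the bracketed attribution ``see \cite{Neeman96,Krause02}'' and no proof is supplied. Your sketch is therefore not competing with a proof in the paper but rather reproducing the arguments of the cited references, and it does so faithfully: part~(a) is Neeman's tower-and-hocolim construction, and part~(b) is Krause's passage through the functor category $\Mod(\cat T^c)$ and pure-injectives, which is exactly why both papers are cited.

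One small correction in your argument for~(a): you justify lifting the compatible family $(\eta_n)\in\lim_n H(t_n)$ to $H(t)$ by claiming the transition maps $H(t_{n+1})\to H(t_n)$ are surjective and invoking vanishing of $\lim^1$. That surjectivity is not obvious (you only arranged that $\eta_n$ lies in the image, not every element), and in any case it is unnecessary. Applying the cohomological $H$ to the Milnor triangle and using that $H$ sends coproducts to products gives an exact sequence
\[
H(t)\longrightarrow \textstyle\prod_n H(t_n)\xrightarrow{\,1-\mathrm{shift}^*\,}\prod_n H(t_n),
\]
so by exactness the map $H(t)\to\ker(1-\mathrm{shift}^*)=\lim_n H(t_n)$ is already surjective, with no Mittag--Leffler hypothesis needed. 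The rest of your outline is sound.
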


\begin{Rem}
Theorem~\ref{thm:Brown}\,\eqref{it:Brown} already implies that $\cat T$ admits products (apply it to the functor $\prod_{i}\cat T(-,t_i)$). In turn, this allows for ``dual'' statements, such as~\eqref{it:coBrown}.
\end{Rem}%

\begin{Cor} \label{cor:adjoints}
Let $F: \cat T\to \cat S$ be an exact functor between triangulated categories, and assume that~$\cat T$ is compactly generated. Then:
\begin{enumerate}[\indent\rm(a)]
\item
\label{it:right-Brown}%
$F$ admits a right adjoint if and only if it preserves coproducts.
\item
\label{it:left-Brown}%
$F$ admits a left adjoint if and only if it preserves products.
\end{enumerate}
\end{Cor}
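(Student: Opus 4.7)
The plan is to derive both statements as direct applications of Brown representability (Theorem~\ref{thm:Brown}), treating (a) and (b) as formally dual.

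For part~\eqref{it:right-Brown}, one direction is automatic: any functor admitting a right adjoint is a left adjoint and hence preserves all colimits that exist, in particular coproducts. For the converse, I would fix an object $s\in \cat S$ and look at the functor
\[
H_s := \cat S(F(-),s) : \cat T\op \longrightarrow \Ab.
\]
This is cohomological because $F$ is exact and $\cat S(-,s)$ is cohomological. Moreover, since $F$ preserves coproducts by hypothesis and $\cat S(-,s)$ turns coproducts into products, $H_s$ sends coproducts in~$\cat T$ to products of abelian groups. Brown representability~\ref{thm:Brown}\eqref{it:Brown} then produces an object $G(s)\in \cat T$ together with a natural isomorphism $\cat S(F(-),s)\cong \cat T(-,G(s))$. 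Letting $s$ vary, the assignment $s\mapsto G(s)$ becomes functorial by Yoneda, and the family of isomorphisms provides the adjunction $F\adj G$.

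For part~\eqref{it:left-Brown}, the argument is entirely parallel using Theorem~\ref{thm:Brown}\eqref{it:coBrown}: if $F$ admits a left adjoint then $F$ preserves products; conversely, assuming $F$ preserves products, the functor
\[
E_s := \cat S(s,F(-)) : \cat T \longrightarrow \Ab
\]
is homological and sends products in~$\cat T$ to products of abelian groups (since both $F$ and $\cat S(s,-)$ preserve products). Brown representability~\ref{thm:Brown}\eqref{it:coBrown} then yields $E(s)\in\cat T$ with $\cat S(s,F(-))\cong \cat T(E(s),-)$, which upgrades to a left adjoint $E\adj F$ exactly as in part~\eqref{it:right-Brown}.

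The only genuine content beyond the ``only if'' directions is the verification that the relevant Hom-functors into~$\Ab$ satisfy the hypotheses of Brown representability, and both checks are immediate. There is no real obstacle: the corollary is essentially a restatement of Theorem~\ref{thm:Brown} applied to the functors $\cat S(F(-),s)$ and $\cat S(s,F(-))$, with the functoriality of the representing objects handled by the standard Yoneda argument.
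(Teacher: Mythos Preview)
Your proposal is correct and follows exactly the same approach as the paper: feed the (co)homological functors $\cat S(F(-),s)$ and $\cat S(s,F(-))$ into the two parts of Brown representability (Theorem~\ref{thm:Brown}). The paper's proof is simply a one-sentence summary of what you have spelled out in more detail.
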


\begin{proof}
As $F$ is exact, the functors $\cat S(F(-), s): \cat T\op\to \Ab$ and $\cat S(s, F(-)):\cat T\to \Ab$ are (co)homological for each $s\in \cat S$, so we can feed them to Theorem~\ref{thm:Brown}.
\end{proof}

\begin{Rem}
\label{rem:comp-gen}%
For $\cat T$ compactly generated, in order to show that a natural transformation $\alpha:F\to F'$ between two coproduct-preserving exact functors $F,F':\cat T\to \cat S$ is an isomorphism, it suffices to prove so for the components $\alpha_x$ at $x\in \cat T^c$ compact. In some cases, this involves giving an alternative definition of $\alpha_x$, valid for $x$ compact, and showing by direct computation that the two definitions coincide. Such computations can become rather involved. We shall leave the easiest of these verifications to the reader but sketch the most difficult ones, hopefully to the benefit of the careful reader.
\end{Rem}

We will also make frequent use of the following two general facts about adjoints on compactly generated categories.

\begin{Prop}[{\cite[Thm.\,5.1]{Neeman96}}] \label{prop:cpt_coprod}
Let $F: \cat S\adjto \cat T:G$ be an adjoint pair of exact functors between triangulated categories $\cat S$ and~$\cat T$, and assume~$\cat S$ compactly generated.
Then~$F$ preserves compact objects iff~$G$ preserves coproducts.
\qed
\end{Prop}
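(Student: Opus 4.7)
The plan is to prove both implications by the same basic mechanism: translate ``$F(s)$ is compact'' (for $s\in\cat S^c$) into a statement about $G$ via the adjunction, and translate ``$G$ preserves coproducts'' into a statement about $F$ via a compact-generators argument. Everything boils down to the chain of natural isomorphisms
\[
\cat T(F(s),\coprod_i t_i)\;\cong\;\cat S(s,G(\textstyle\coprod_i t_i))
\qquadtext{and}
\coprod_i\cat T(F(s),t_i)\;\cong\;\coprod_i\cat S(s,G(t_i))\;\cong\;\cat S(s,\textstyle\coprod_i G(t_i)),
\]
where the first comes from the adjunction $F\adj G$ and the last from the compactness of $s$ in $\cat S$ (which is automatic, since we take $s\in\cat S^c$).

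For ($\Rightarrow$): assume $F$ sends $\cat S^c$ into $\cat T^c$. Given a family $\{t_i\}_{i\in I}$ in~$\cat T$, consider the canonical comparison morphism $\phi:\coprod_i G(t_i)\to G(\coprod_i t_i)$ in~$\cat S$. To prove $\phi$ is an isomorphism it suffices, since $\cat S$ is compactly generated, to check that $\cat S(s,\phi)$ is a bijection for every $s\in\cat S^c$. Under the adjunction and the identifications above, $\cat S(s,\phi)$ becomes the canonical map $\coprod_i\cat T(F(s),t_i)\to\cat T(F(s),\coprod_i t_i)$, which is bijective precisely because $F(s)$ is compact by assumption. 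Hence $G$ preserves coproducts.

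For ($\Leftarrow$): assume $G$ preserves coproducts, and let $s\in\cat S^c$. For any family $\{t_i\}$ in $\cat T$ we then have
\[
\cat T(F(s),\coprod_i t_i)\;\cong\;\cat S(s,G(\textstyle\coprod_i t_i))\;\cong\;\cat S(s,\textstyle\coprod_i G(t_i))\;\cong\;\coprod_i\cat S(s,G(t_i))\;\cong\;\coprod_i\cat T(F(s),t_i),
\]
using in turn the adjunction, the hypothesis on $G$, the compactness of $s$, and the adjunction again. Thus $F(s)$ is compact in $\cat T$, and $F$ preserves compact objects.

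The only non-formal ingredient is compact generation of $\cat S$, which is what licences the detection of isomorphisms in the first direction on the class $\cat S^c$; apart from that the proof is purely a matter of writing down the canonical isomorphisms, so there is no real obstacle to overcome. One minor point to be careful about is checking that the isomorphisms assemble \emph{naturally} so that $\cat S(s,\phi)$ is genuinely identified with the expected comparison map $\coprod_i\cat T(F(s),t_i)\to\cat T(F(s),\coprod_i t_i)$; this is a straightforward diagram chase using naturality of the unit/counit of $F\adj G$.
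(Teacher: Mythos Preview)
Your proof is correct and is the standard argument for this well-known result. Note that the paper itself does not prove this proposition: it is stated with a reference to Neeman and closed immediately with a \qed, so there is no proof in the paper to compare against.
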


The second general fact will play a crucial role in this paper:

\begin{Lem} \label{Lem:trick_with_adjoints}
Let $F: \cat S\adjto \cat T: G$ be an adjoint pair of exact functors between triangulated categories. Assume $\cat S$ compactly generated and that $F$ preserves compacts.
\begin{enumerate}[\indent\rm(a)]
\item
\label{it:right-trick}%
If the restriction $F|_{\cat S^c}:\cat S^c\to \cat T^c$ admits a right adjoint~$G_0$, then $G$ preserves compacts, and its restriction to compacts is isomorphic to~$G_0$.
\item
\label{it:left-trick}%
If the restriction $F|_{\cat S^c}:\cat S^c\to \cat T^c$ admits a left adjoint~$E_0$ and if $\cat T$ is compactly generated, then $F$ preserves products.
\end{enumerate}
\end{Lem}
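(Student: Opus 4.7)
The plan for both parts is to exploit compact generation twice: first to extend an adjunction-type identification from $\cat S^c$ to all of $\cat S$ by invoking coproduct-preservation, and then to detect the desired isomorphism or preservation property by testing against compact objects.

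For part~(a), I would construct, for each $t\in\cat T^c$, a canonical comparison $\alpha_t\colon G_0(t)\to G(t)$ in $\cat S$ --- say, by transporting the counit of $F|_{\cat S^c}\adj G_0$ across $F\adj G$. For any $s\in\cat S^c$ (so that $F(s)\in\cat T^c$), the map $(\alpha_t)_*$ fits into
\[
\cat S(s,G_0(t))\;\cong\;\cat T(F(s),t)\;\cong\;\cat S(s,G(t)),
\]
where the first iso is the restricted adjunction and the second is $F\adj G$; hence $(\alpha_t)_*$ is an iso for every compact $s$. Taking the cone of $\alpha_t$ and applying $\cat S(s,-)$ kills every compact $s$, so compact generation of $\cat S$ forces $\alpha_t$ to be an isomorphism. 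In particular $G(t)\cong G_0(t)\in\cat S^c$, which shows that $G$ preserves compacts with $G|_{\cat T^c}\cong G_0$.

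For part~(b), it suffices by Corollary~\ref{cor:adjoints}(b) to check that $F$ preserves products. The key intermediate step is to extend the restricted adjunction to a natural isomorphism
\[
\phi_{t,x}\colon\cat S(E_0(t),x)\;\xrightarrow{\;\sim\;}\;\cat T(t,F(x))
\]
for each $t\in\cat T^c$ and \emph{every} $x\in\cat S$, sending $g$ to $F(g)\circ\eta^0_t$ where $\eta^0$ is the unit of $E_0\adj F|_{\cat S^c}$. Both sides are homological functors $\cat S\to\Ab$ preserving coproducts --- the left-hand side because $E_0(t)\in\cat S^c$, the right-hand side because $t\in\cat T^c$ and $F$ itself preserves coproducts (having the right adjoint $G$) --- and they agree on $\cat S^c$; the locus on which $\phi_{t,-}$ is iso is therefore a localizing subcategory of $\cat S$ containing $\cat S^c$, hence all of $\cat S$ by the principle of Remark~\ref{rem:comp-gen} adapted to $\Ab$-valued functors.

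With $\phi$ in hand, for a family $\{x_i\}$ in $\cat S$ and any $t\in\cat T^c$, the canonical comparison $c\colon F(\prod_i x_i)\to \prod_i F(x_i)$ in $\cat T$ becomes, after applying $\cat T(t,-)$, the composite
\[
\cat T(t,F(\textstyle\prod_i x_i))\,\cong\,\cat S(E_0(t),\textstyle\prod_i x_i)\,\cong\,\textstyle\prod_i\cat S(E_0(t),x_i)\,\cong\,\textstyle\prod_i\cat T(t,F(x_i))\,\cong\,\cat T(t,\textstyle\prod_i F(x_i));
\]
identification of this composite with $c_*$ is a direct check via naturality of $\phi$ and the universal property of the product. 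Since the composite is an iso for every compact $t$, compact generation of $\cat T$ forces $c$ to be an iso, i.e.\ $F$ preserves products. The principal obstacle I expect is precisely the extension step in~(b): it is where the hypotheses interact most subtly, using compactness of $E_0(t)$ together with coproduct-preservation of $F$ (supplied by the existing right adjoint $G$) to move from $\cat S^c$ to all of $\cat S$, before compact generation of $\cat T$ is invoked at the end; part~(a), by comparison, needs only compact generation of~$\cat S$.
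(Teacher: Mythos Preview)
Your proposal is correct and follows essentially the same approach as the paper's proof: in~(a) you build the comparison $G_0(t)\to G(t)$ from the restricted adjunction and test it on compact objects via the two adjunctions, then invoke compact generation of~$\cat S$; in~(b) you extend the restricted adjunction to a ``partial adjoint'' $\cat S(E_0(t),x)\cong\cat T(t,F(x))$ valid for all $x\in\cat S$ by the coproduct-preservation argument, and then deduce product-preservation of~$F$ by testing the canonical comparison on compacts of~$\cat T$. The only cosmetic difference is that the paper phrases the isomorphism-detection in~(a) via Yoneda rather than an explicit cone argument, and your reference to Corollary~\ref{cor:adjoints}(b) at the start of~(b) is superfluous since the conclusion of the lemma is already that $F$ preserves products.
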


\begin{proof}
For every compact $t\in\cat T^c$ and every compact $s\in \cat S^c$, we have a natural bijection
$\cat{S}^c(s,G_0(t))\cong\cat{T}^c(F|_{\cat S^c}(s),t) = \cat T(F(s),t)\cong \cat{S}(s,G(t))$.
By plugging $s:=G_0(t)$, the identity map of $G_0(t)$ corresponds to a certain morphism $\gamma_t:G_0(t)\to G(t)$.
Varying $t\in \cat T^c$, we obtain a natural morphism $\gamma: G_0 \to G|_{\cat T^c}$ by the naturality in~$t$ of the bijection.
By its naturality in~$s$, it actually follows that the bijection is obtained by composing maps $f\in \cat S(s, G_0(t))$ with~$\gamma_t$.
In particular, for any fixed $t\in \cat T^c$ the induced map $\cat S(-,\gamma_t): \cat S(-, G_0(t))\to \cat S(- , G(t))$ is invertible on all $s\in \cat S^c$ by construction, and since $\cat S$ is compactly generated, it is therefore invertible on all~$s\in \cat S$ (cf.~Remark~\ref{rem:comp-gen}). It follows by Yoneda that $\gamma_t$ is an isomorphism. Hence $G(t)\simeq G_0(t)\in \cat S^c$ for every $t\in \cat T^c$, which gives~\eqref{it:right-trick}.

For~\eqref{it:left-trick}, let $\eta:\Id_{\cat T^c}\to F\circ E_0$ be the unit of the adjunction $E_0\adj F|_{\cat S^c}$. For every $x\in \cat T^c$ compact and $s\in\cat S$ arbitrary, we can consider the morphism
$$
\alpha_{x,s}:\cat S(E_0(x),s)\otoo{F}\cat T(FE_0(x),F(s))\otoo{\eta^*}\cat T(x,F(s)).
$$
It is an isomorphism when $s\in \cat S^c$, by the adjunction. Both functors $\cat S(E_0(x),-)$ and $\cat T(x,F(-))$ are homological $\cat S\to \Ab$ and preserve coproducts because $F$ does (it has a right adjoint) and because $x$ and $E_0(x)$ are compact. By Remark~\ref{rem:comp-gen}, $\alpha_{x,s}$ is an isomorphism for every $x\in\cat T^c$ and every~$s\in\cat S$. This kind of ``partial adjoint" suffices to prove that $F$ preserves products, as usual\,: Let $\{s_i\}_{i\in I}$ be a set of objects of~$\cat S$ and $x\in \cat T^c$ be compact and consider the isomorphism
\[
\cat T(x,F(\mathop{\textstyle\prod}_{i\in I} s_i)) \underset{\alpha}{\cong} \cat S(E_0(x),\mathop{\textstyle\prod}_i s_i) \cong \mathop{\textstyle\prod}_i \cat S(E_0(x),s_i) \underset{\alpha}{\cong} \mathop{\textstyle\prod}_i \cat T(x,F s_i)
\cong \cat T(x,\mathop{\textstyle\prod}_{i\in I} Fs_i)\,.
\]
One verifies that this is the morphism induced by the canonical map $F(\prod_{i\in I} s_i)\to \prod_{i\in I}F(s_i)$ and since~$\cat T$ is compactly generated, this map is an isomorphism.
\end{proof}

\begin{center}
*\ *\ *
\end{center}

We now let the tensor~$\otimes$ enter the game.

\begin{Def} \label{Def:rigidly}
A tensor-triangulated category $\cat C$ (\ie~a triangulated category with a compatible closed symmetric monoidal structure, see~\cite[App.~A.2]{HoveyPalmieriStrickland97})
is called \emph{rigidly-compactly} generated if it is compactly generated and if compact objects and rigid objects coincide; in particular, the tensor unit object~$\unit$ is compact. We denote the tensor by $\otimes:\cat C\times \cat C\too \cat C$ and its right adjoint by $\ihom:\cat C\op\times \cat C\too \cat C$ (internal Hom). An object~$x$ is \emph{rigid} if the natural map $\ihom(x,\unit)\otimes y\to \ihom(x,y)$ is an isomorphism for all~$y$. Rigid objects are often called ``(strongly) dualizable'' in the literature but we avoid this terminology to prevent any possible confusion with our ``dualizing objects''.
\end{Def}

\begin{Rem} \label{Rem:selfduality_on_cpts}
When $\cat C$ is rigidly-compactly generated, its subcategory of compact objects $\cat C^c\subset \cat C$ is a thick subcategory, closed under~$\otimes$. It admits the canonical duality $\dual=\ihom(-,\unit):(\cat C^c)\op\to \cat C^c$ satisfying~$\dual^2\cong\Id$. See details in~\cite[App.~A]{HoveyPalmieriStrickland97} for instance, where our rigidly-compactly generated tensor-triangulated categories are called ``unital algebraic stable homotopy categories".
\end{Rem}

Let us mention at this point a few important examples of rigidly-compactly generated categories~$\cat C$ arising in various fields of mathematics.

\begin{Exa}
\label{exa:schemes}%
Let $X$ be a quasi-compact and quasi-separated scheme. Let $\cat C:= \Der_{\Qcoh}(X)$ be the derived category of complexes of $\mathcal O_X$-modules having quasi-coherent homology (see~\cite{Lipman09}). It is rigidly-compactly generated, and its compact objects are precisely the perfect complexes:
 $(\Der_{\Qcoh}(X))^c=\Dperf(X)$ (see \cite{BondalVandenbergh03}). The latter are easily seen to be rigid for the derived tensor product~$\otimes= \otimes^{\mathrm L}_{\mathcal O_X}$.
If moreover $X$ is separated, there is an equivalence $\Der_{\Qcoh}(X)\simeq \Der(\Qcoh X)$ with the derived category of complexes of quasi-coherent $\mathcal O_X$-modules (see \cite{BoekstedtNeeman93}).
If $X=\Spec(A)$ is affine, then $\Der(\Qcoh X)\simeq \Der(A\MMod)$ with compacts $\Der(A\MMod)^c\simeq \Khocat^\mathrm{b}(A\pproj)$, the homotopy category of bounded complexes of finitely generated projectives.
\end{Exa}

\begin{Exa}
\label{exa:SH}%
Let $G$ be a compact Lie group. Then $\cat C := \SH(G)$, the homotopy category of ``genuine'' $G$-spectra indexed on a complete $G$-universe (see \cite[\S9.4]{HoveyPalmieriStrickland97}), is rigidly-compactly generated. The suspension $G$-spectra $\Sigma^\infty_+G/H$, with~$H$ running through all closed subgroups of~$G$, form a set of rigid-compact generators which includes the tensor unit $\unit= \Sigma^\infty_+G/G$.
\end{Exa}

\begin{Exa}
\label{exa:Stab}%
Let $G$ be a finite group and let $\kk$ be a field. Then $\cat C:=\Stab(\kk G)$, the stable category of $\kk G$-modules modulo projectives, is rigidly-compactly generated. (Note that the derived category $\Der(\kk G)$, though compactly generated, is not \emph{rigidly}-compactly generated because its unit $\unit=\kk$ is not compact).
More generally, $G$ could be a finite group scheme over $\kk$ (see e.g.~\cite[Theorem~9.6.3]{HoveyPalmieriStrickland97}).
\end{Exa}

\begin{Exa}
\label{exa:SHk}%
Let $\kk$ be a field and let $\cat C := \SHA(\kk)$ denote the stable $\bbA^1$-homotopy category.
Twists of smooth projective \mbox{$\kk$-varieties} are rigid-compact in~$\cat C$. They generate the whole category under resolution of singularities (see \cite{Riou05}). Hence if $\kk$ has characteristic zero, $\SHA(\kk)$ is rigidly-compactly generated.
\end{Exa}

\begin{Exa}
\label{exa:BraveNew}%
Let $A$ be a ``Brave New'' commutative ring, that is, a structured commutative ring spectrum.
To fix ideas, we can understand $A$ to be a commutative \mbox{$\Sphere$-algebra} in the sense of \cite{EKMM97}.
Then its derived category $\Der(A)$, \ie the homotopy category of $A$-modules, is a rigidly-compactly generated category, which is generated by its tensor unit~$A$ (see e.g.~\cite[Example 1.2.3(f)]{HoveyPalmieriStrickland97} and \cite[Example 2.3(ii)]{SchwedeShipley03}).
For example, every commutative dg-ring has an associated commutative $\Sphere$-algebra (its Eilenberg-MacLane spectrum) whose derived category is equivalent, as a tensor triangulated category, to the derived category of dg-modules (see~\cite{Shipley07} and \cite[Theorem 5.1.6]{SchwedeShipley03}).  Thus, derived categories of commutative dg-rings are also rigidly-compactly generated.
\end{Exa}

\begin{Cor}
\label{cor:base}%
Let $\cat C$ and $\cat D$ be rigidly-compactly generated categories, and let $f^*: \cat D\to \cat C$ be as in our basic Hypothesis~\ref{hyp:base}. Then $f^*$ preserves compacts and admits a right adjoint $f_*:\cat C\to \cat D$, which itself admits a right adjoint $f\uu{1}:\cat D\to \cat C$.
\end{Cor}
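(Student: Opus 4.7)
The plan is to chain together the Brown representability results (Corollary~\ref{cor:adjoints} and Proposition~\ref{prop:cpt_coprod}) recalled just above, inserting a single rigidity argument to get from $f^*$ preserving coproducts to $f_*$ preserving coproducts.

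First, I would apply Corollary~\ref{cor:adjoints}\eqref{it:right-Brown} directly: since $\cat{D}$ is compactly generated and $f^*:\cat{D}\to\cat{C}$ is exact and preserves coproducts by Hypothesis~\ref{hyp:base}, it admits a right adjoint $f_*:\cat{C}\to\cat{D}$. This step is essentially free.

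Second, I would verify that $f^*$ preserves compact objects. Here I use the hypothesis that $f^*$ is strong symmetric monoidal together with the defining property of rigidly-compactly generated categories (Definition~\ref{Def:rigidly}): in both $\cat{C}$ and $\cat{D}$, compact objects coincide with rigid objects. Since strong monoidal functors send rigid objects to rigid objects (a formal consequence of $f^*$ preserving the tensor, the unit, and hence evaluation and coevaluation maps witnessing rigidity), any compact $d \in \cat{D}$ is rigid, so $f^*(d)$ is rigid in $\cat{C}$, hence compact. This is the only step requiring the rigidity hypothesis rather than pure Brown-representability formalism, and while elementary, it is the conceptual heart of the statement.

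Third, with $f^* \dashv f_*$ established and $f^*$ shown to preserve compacts, I would invoke Proposition~\ref{prop:cpt_coprod} (with $\cat{S} = \cat{D}$ and $\cat{T} = \cat{C}$) to conclude that $f_*$ preserves coproducts. Finally, since $\cat{C}$ is compactly generated and $f_*$ is exact and coproduct-preserving, a second application of Corollary~\ref{cor:adjoints}\eqref{it:right-Brown} produces the right adjoint $f\uu{1}:\cat{D}\to\cat{C}$ to $f_*$, completing the chain $f^* \dashv f_* \dashv f\uu{1}$.

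I do not anticipate any genuine obstacle: each step is a direct citation once the rigidity observation is in place, so the only thing worth spelling out carefully in the write-up is the preservation of rigid objects by a strong symmetric monoidal exact functor, which is standard but deserves at least a pointer to the relevant appendix of \cite{HoveyPalmieriStrickland97}.
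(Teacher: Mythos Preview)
Your proposal is correct and follows essentially the same route as the paper's proof: Brown representability for $f_*$, the rigidity argument to show $f^*$ preserves compacts, Proposition~\ref{prop:cpt_coprod} to deduce $f_*$ preserves coproducts, and a second application of Brown representability for~$f\uu{1}$. The only cosmetic difference is that the paper cites \cite[\S III.1]{LMS86} rather than \cite{HoveyPalmieriStrickland97} for the preservation of rigids by strong monoidal functors.
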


\begin{proof}
Since $f^*$ preserves coproducts by assumption, $f_*$ exists by Brown Representability, Cor.\,\ref{cor:adjoints}\,\eqref{it:right-Brown}. Since $f^*$ is symmetric monoidal by assumption, it must send rigid objects of~$\cat D$ to rigid objects of~$\cat C$ (see e.g.\ \cite[\S III.1]{LMS86}). Hence it must preserve compacts ($=$~rigids). By Proposition~\ref{prop:cpt_coprod}, $f_*$ preserves coproducts and we can apply another layer of Brown Representability to~$f_*$ in order to get~$f\uu{1}$.
\end{proof}

Our three functors $f^*\dashv f_*\dashv f\uu{1}$ automatically satisfy some basic formulas.

\begin{Prop} \label{Prop:right_proj_formula}
	Let $f^*\dashv f_*\dashv f\uu{1}$ be as in Corollary~\ref{cor:base}.
Then there is a canonical natural isomorphism
\begin{align}
\pi : x \otimes f_*(y) &\overset{\sim}{\longrightarrow} f_*(f^*(x)\otimes y) \label{eq:right_proj_formula}
\end{align}
for all $x\in \cat D$ and $y\in \cat C$, obtained from $f^*(x \otimes f_*(y))\cong f^*(x)\otimes f^*f_*(y)\to f^*(x)\otimes y$ by adjunction.
We also have three further canonical isomorphisms as follows:
\begin{align}
\ihomcat{D}(x, f_*y) &\cong f_*\, \ihomcat{C}(f^* x, y)  \label{eq:inthom-f^*-f_*} \\
\ihomcat{D}(f_* x, y) &\cong f_*\, \ihomcat{C}(x,f\uu{1} y)  \label{eq:inthom-f_*-f^1} \\
f\uu{1} \ihomcat{D}(x,y) &\cong \ihomcat{C}(f^*x, f\uu{1}y) \label{eq:co-RPF} \,.
\end{align}
\end{Prop}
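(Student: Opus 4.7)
The plan is to establish the four formulas in the order \eqref{eq:inthom-f^*-f_*}, \eqref{eq:right_proj_formula}, \eqref{eq:inthom-f_*-f^1}, \eqref{eq:co-RPF}: I would treat \eqref{eq:inthom-f^*-f_*} as a purely formal Yoneda consequence of strong monoidality, then combine it with rigidity of compact objects to establish the projection formula \eqref{eq:right_proj_formula}, and finally derive \eqref{eq:inthom-f_*-f^1} and \eqref{eq:co-RPF} from \eqref{eq:right_proj_formula} together with the adjunction $f_*\adj f\uu{1}$ provided by Corollary~\ref{cor:base}.

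For \eqref{eq:inthom-f^*-f_*}, test against an arbitrary $z\in\cat D$:
\[
\Homcat{D}(z,\ihomcat{D}(x,f_*y)) \cong \Homcat{D}(z\otimes x, f_*y) \cong \Homcat{C}(f^*z\otimes f^*x, y) \cong \Homcat{C}(f^*z,\ihomcat{C}(f^*x,y)) \cong \Homcat{D}(z, f_*\,\ihomcat{C}(f^*x,y)),
\]
using in turn the internal-hom adjunction in~$\cat D$, the composite of $f^*\adj f_*$ with the strong monoidality iso $f^*(z\otimes x)\cong f^*z\otimes f^*x$, the internal-hom adjunction in~$\cat C$, and $f^*\adj f_*$ once more. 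Yoneda then gives~\eqref{eq:inthom-f^*-f_*}.

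The heart of the argument is the projection formula \eqref{eq:right_proj_formula}. Fix $y\in\cat C$ and observe that both $F(x):= x\otimes f_*y$ and $G(x):= f_*(f^*x\otimes y)$ are coproduct-preserving exact endofunctors of~$\cat D$, thanks to Hypothesis~\ref{hyp:base}, Corollary~\ref{cor:base}, and the bilinearity of~$\otimes$. Hence by Remark~\ref{rem:comp-gen} it suffices to verify that $\pi_x$ is an isomorphism when $x\in\cat D^c$ is compact and therefore rigid. Using the rigidity of~$x$ (and of $\dual x$), the already-proven \eqref{eq:inthom-f^*-f_*}, and the canonical iso $f^*\dual x\cong\dual f^*x$ automatic from strong monoidality, one builds a composite isomorphism
\[
x\otimes f_*y \;\cong\; \ihomcat{D}(\dual x, f_*y) \;\cong\; f_*\,\ihomcat{C}(f^*\dual x, y) \;\cong\; f_*\,\ihomcat{C}(\dual f^*x, y) \;\cong\; f_*(f^*x\otimes y).
\]
The main obstacle is the verification that this composite agrees with $\pi_x$ as defined in the statement; this is a routine but slightly delicate unwinding of the units and counits involved in the construction of~$\pi$.

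Finally, both \eqref{eq:inthom-f_*-f^1} and \eqref{eq:co-RPF} follow by Yoneda from \eqref{eq:right_proj_formula} together with $f_*\adj f\uu{1}$. For \eqref{eq:inthom-f_*-f^1}, testing against $z\in\cat D$ one converts $\Homcat{D}(z,\ihomcat{D}(f_*x,y))$ along the chain
\[
\Homcat{D}(z\otimes f_*x,y) \cong \Homcat{D}(f_*(f^*z\otimes x),y) \cong \Homcat{C}(f^*z\otimes x, f\uu{1}y) \cong \Homcat{C}(f^*z, \ihomcat{C}(x,f\uu{1}y)) \cong \Homcat{D}(z, f_*\,\ihomcat{C}(x,f\uu{1}y)),
\]
using internal hom, \eqref{eq:right_proj_formula}, $f_*\adj f\uu{1}$, internal hom, and $f^*\adj f_*$. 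A completely parallel computation, now testing against $z\in\cat C$ and using \eqref{eq:right_proj_formula} to rewrite $f_*z\otimes x \cong f_*(z\otimes f^*x)$, yields~\eqref{eq:co-RPF}.
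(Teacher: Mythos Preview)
Your proposal is correct and follows essentially the same strategy as the paper: reduce the projection formula to the rigid case via compact generation (Remark~\ref{rem:comp-gen}), and deduce the three internal-hom identities by passing to adjoints. The main difference is one of ordering and packaging. You prove \eqref{eq:inthom-f^*-f_*} first by a direct Yoneda chain and then use it (together with rigidity) to build an explicit inverse to~$\pi_x$ for $x$ rigid, which forces you to verify that your composite coincides with the canonical~$\pi_x$. The paper instead appeals to the general fact (cf.~\cite[Prop.~3.2]{FauskHuMay03}) that the canonical map~$\pi$ is automatically invertible on rigid objects, and only afterwards derives \eqref{eq:inthom-f^*-f_*}, \eqref{eq:inthom-f_*-f^1}, \eqref{eq:co-RPF} uniformly as ``conjugate formulas'' (Remark~\ref{Rem:conjugate_formulas}) obtained by taking right adjoints of the two sides of~\eqref{eq:right_proj_formula} and of $f^*(x\otimes y)\cong f^*x\otimes f^*y$. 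Your explicit Yoneda chains for \eqref{eq:inthom-f_*-f^1} and \eqref{eq:co-RPF} are exactly this conjugation written out; the paper's presentation just saves the diagram chase you flag as the ``main obstacle''.
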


\begin{Ter}
We call~\eqref{eq:right_proj_formula} the \emph{(right) projection formula}. Equations~\eqref{eq:inthom-f^*-f_*} and~\eqref{eq:inthom-f_*-f^1} are \emph{internal realizations} of the two adjunctions $f^*\dashv f_*\dashv f\uu{1}$, from which the adjunctions can be recovered by applying $\ihomcat{D}(\unit_\cat{D},-)$.
	Note that~\eqref{eq:inthom-f_*-f^1} specializes to~\eqref{eq:i-DO} by inserting $y=\unit_\cat D$.
\end{Ter}

\begin{proof}
	The map~$\pi$ is clearly well-defined for all~$x$ and~$y$ and is automatically invertible whenever~$x$ is rigid (cf.~\cite[Prop.\,3.2]{FauskHuMay03}).
Fixing an arbitrary~$y\in \cat C$, note that both sides of~\eqref{eq:right_proj_formula} are exact and commute with coproducts in the variable~$x$.
As $\cat D$ is generated by its compact ($=$~rigid) objects, $\pi$ is an isomorphism for all~$x\in \cat D$ (Rem.\,\ref{rem:comp-gen}). This proves the first isomorphism, \ie the projection formula.

Now we can derive from it two of the other equations by taking adjoints. (Recall that if $F_i\adj G_i$ for $i=1,2$ then $F_1F_2\adj G_2G_1$. Note the order-reversal.) First, by fixing~$x$ we see two composite adjunctions
\begin{align*}
x\otimes f_*
= (x\otimes -)\circ f_*
\quad & \dashv \quad
f\uu{1} \circ \ihomcat{D}(x,-)
\\
f_*(f^*(x)\otimes -)
= f_*\circ (f^*(x)\otimes-)
\; & \dashv \;
\ihomcat{C}(f^*x, -)\circ f\uu{1}
= \ihomcat{C}(f^*x, f\uu{1} (-)) \,.
\end{align*}
Since $\pi$ is an isomorphism of the left adjoints, by the uniqueness of right adjoints it induces an isomorphism between the right ones, \ie we get~\eqref{eq:co-RPF}. (The naturality in~$x$ is guaranteed by the fact that the two adjunctions above are actually natural families of adjunctions parametrized by~$x$.)
If we fix~$y$ instead, we get adjunctions
\begin{align*}
(-)\otimes f_*(y)
\quad & \dashv\quad
\ihomcat{D} (f_*y, -)
\\
f_*(f^*(-)\otimes y)
= f_*\circ (-\otimes y) \circ f^*
\; & \dashv\;
f_*\circ \ihomcat{C}(y,-)\circ f\uu{1}
= f_*\,\ihomcat{C}(y, f\uu{1}(-))
\end{align*}
from which we derive the natural isomorphism~\eqref{eq:inthom-f_*-f^1}.
By fixing $x$ in the isomorphism $f^*(x)\otimes f^*(y)\cong f^*(x\otimes y)$ given by the monoidal structure of~$f^*$, we obtain
\begin{align*}
f^*(x) \otimes f^*
= (f^*(x)\otimes -)\circ f^*
\quad & \dashv \quad
f_*\, \ihomcat{C}(f^*x, -)
\\
f^*(x\otimes -) = f^*\circ (x\otimes-)
\quad & \dashv\quad
\ihomcat{D}(x,-)\circ f_*
= \ihomcat{D}(x,f_*(-))
\end{align*}
from which we derive the remaining relation~\eqref{eq:inthom-f^*-f_*}.
\end{proof}

\begin{Rem} \label{Rem:conjugate_formulas}
The reasoning of the previous proof will be used several times, so it is worth spending a little thought on it.  Let's say we have some \emph{formula}, by which we mean a natural isomorphism $F_1\circ \ldots \circ F_n \cong F'_1\circ\ldots \circ F'_m$ between composite functors, in which every factor is part of an adjunction $F_i\dashv G_i$ and $F'_j\dashv G'_j$.  By taking right adjoints on both sides we derive a formula $G_n G_{n-1}\cdots G_1 \cong G'_m G'_{m-1} \cdots G'_1$. Actually the two formulas are equivalent, since we may recover the first one by taking left adjoints in the second one.  Following \cite{FauskHuMay03}, we can say that the two formulas are \emph{conjugate}, or \emph{adjunct}.  Note however that if the original formula admits two different factor-decompositions as above, we would obtain a different conjugate formula from each choice of decomposition.  This is illustrated by the previous proposition, in which~\eqref{eq:inthom-f_*-f^1} and~\eqref{eq:co-RPF} are obtained from two different decompositions of~\eqref{eq:right_proj_formula}.  In this case,~\eqref{eq:right_proj_formula} is a formula between functors of two variables~$x$ and~$y$, and the two decompositions have been obtained by first fixing either~$x$ or~$y$.  Note that the tensor formula $f^*(x\otimes y)\cong f^*(x)\otimes f^*(y)$ is symmetric in $x$ and~$y$, hence the two resulting decompositions yield the same conjugate formula~\eqref{eq:inthom-f^*-f_*}.  All our conjugate formulas will come in such couplets or triplets and will be obtained in this way from a starting formula in either one or two variables.  The systematic exploitation of this principle will greatly simplify the search for new relations.  When repeating this reasoning below we will mostly leave the straightforward details to the reader.
\end{Rem}

\bigbreak
\section{Grothendieck-Neeman duality and ur-Wirthm\"uller}
\label{se:GN}%

We want to prove Theorem~\ref{thm:GN-intro}, for which we need some preparation. Recall the basic set-up as in Hypothesis~\ref{hyp:base} and the three functors $f^*\adj f_*\adj f\uu{1}$ (Cor.\,\ref{cor:base}). We first focus on the new, slightly surprising facts. The following lemma should be compared to the well-known property presented in Proposition~\ref{prop:cpt_coprod}.

\begin{Lem} \label{Lem:f_*-cpts}
If $f^*:\cat D\to \cat C$ has a left adjoint $f\dd{1}\adj f^*$, \ie if $f^*$ preserves products, then its right adjoint $f_*$ preserves compact objects: $f_*(\cat C^c)\subseteq \cat D^c$.
\end{Lem}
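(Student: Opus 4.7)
The plan is to apply Lemma~\ref{Lem:trick_with_adjoints}\,\eqref{it:right-trick} to the adjunction $f^*\dashv f_*$, with $F=f^*$ and $G=f_*$. Both $\cat D$ and $\cat C$ are compactly generated, and we already know from Corollary~\ref{cor:base} that $f^*$ preserves compact objects. Hence the only thing to verify is that the restriction $f^*|_{\cat D^c}:\cat D^c\to \cat C^c$ admits a right adjoint; the lemma then delivers both $f_*(\cat C^c)\subseteq \cat D^c$ and an explicit description of $f_*|_{\cat C^c}$.

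First I would observe that $f\dd{1}$ itself preserves compacts. Indeed, by hypothesis $f\dd{1}$ is a left adjoint to~$f^*$, and $f^*$ preserves coproducts (Hyp.\,\ref{hyp:base}), so Proposition~\ref{prop:cpt_coprod} applied to the adjunction $f\dd{1}\dashv f^*$ gives $f\dd{1}(\cat C^c)\subseteq\cat D^c$. Consequently, restricting both functors to compacts yields an honest adjunction $f\dd{1}|_{\cat C^c}\dashv f^*|_{\cat D^c}$ between the (essentially small) rigid tensor categories $\cat C^c$ and~$\cat D^c$.

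The next, and key, step is to turn this restricted \emph{left} adjoint into a \emph{right} adjoint by using rigidity. On any rigid tensor category one has the self-duality $\dual=\ihom(-,\unit)$ with $\dual^2\cong\Id$ (Rem.\,\ref{Rem:selfduality_on_cpts}), and since $f^*$ is strong monoidal it commutes with $\dual$ on rigid objects: $f^*\dual_\cat D\cong \dual_\cat C f^*$. A straightforward Yoneda-style computation
\[
\Homcat{C}(f^*a,b)\cong\Homcat{C}(\dual_\cat C b,\dual_\cat C f^*a)\cong \Homcat{C}(\dual_\cat C b,f^*\dual_\cat D a)\cong \Homcat{D}(f\dd{1}\dual_\cat C b,\dual_\cat D a)\cong \Homcat{D}(a,\dual_\cat D f\dd{1}\dual_\cat C b)
\]
for $a\in\cat D^c$ and $b\in\cat C^c$ then exhibits $G_0:=\dual_\cat D\circ f\dd{1}|_{\cat C^c}\circ \dual_\cat C$ as a right adjoint to $f^*|_{\cat D^c}$.

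With $G_0$ in hand, Lemma~\ref{Lem:trick_with_adjoints}\,\eqref{it:right-trick} immediately yields $f_*(\cat C^c)\subseteq\cat D^c$, completing the proof (and moreover identifying $f_*|_{\cat C^c}\cong \dual_\cat D\circ f\dd{1}\circ \dual_\cat C$, which is a natural statement to keep in mind for later use). The only genuine subtlety is the rigidity manipulation turning a left adjoint into a right adjoint on compacts; everything else is a direct invocation of results already established.
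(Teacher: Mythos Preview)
Your proof is correct and follows essentially the same approach as the paper: show $f\dd{1}$ preserves compacts via Proposition~\ref{prop:cpt_coprod}, conjugate the restricted adjunction $f\dd{1}|_{\cat C^c}\dashv f^*|_{\cat D^c}$ by the rigid dualities $\dual$ to obtain a right adjoint $G_0=\dual_{\cat D}\circ f\dd{1}\circ\dual_{\cat C}$ to $f^*|_{\cat D^c}$, and then invoke Lemma~\ref{Lem:trick_with_adjoints}\,\eqref{it:right-trick}. Your Yoneda computation is slightly more explicit than the paper's diagrammatic phrasing, but the argument is the same.
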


\begin{proof}
Recall that~$f^*$ preserves coproducts by our standing hypothesis, hence~$f\dd{1}$ preserves compacts (Prop.\,\ref{prop:cpt_coprod}). Therefore $f\dd{1}\adj f^*$ restricts to an adjunction $f\dd{1}: \cat C^c \adjto \cat D^c: f^*$ on compact objects. Since compacts are rigid, duality provides equivalences of (tensor) categories $\dual:=\ihomcat{C}(- , \unit): (\cat C^c)\op \to \cat C^c$ and $\dual:=\ihomcat{D}(- , \unit): (\cat D^c)\op \to \cat D^c$ which are quasi-inverse to themselves (\ie\ $\dual^{-1}=\dual\op$).
Moreover, the symmetric monoidal functor~$f^*$ preserves rigid objects~$c$ and their tensor duals~$\dual(c)$ (cf.~\cite[\S III.1]{LMS86}), so that the following square commutes (up to isomorphism of functors):
\[
\xymatrix{
(\cat D^c)\op \ar[d]_{(f^*)\op} \ar[r]^-\dual_-\sim & \cat D^c \ar[d]^{f^*} \\
(\cat C^c)\op \ar[r]^-\dual_-\sim & \cat C^c.
}
\]
This self-duality implies that the composite functor $f_*^c:=\dual\circ (f\dd{1})\op\circ  \dual^{-1} = \dual f\dd{1} \dual \colon \cat C^c\to \cat D^c$ is \emph{right} adjoint to~$f^*\colon \cat D^c \to \cat C^c$.
By Lemma~\ref{Lem:trick_with_adjoints}\,\eqref{it:right-trick} (applied to~$F:=f^*$), the right adjoint $f_*$ to $f^*$ must preserve compact objects.
\end{proof}

\begin{Prop} \label{Prop:midstep}
Suppose that $f_*:\cat C\to \cat D$ preserves compacts. Then there is a canonical natural isomorphism
$ f\uu{1}(x) \otimes f^*(y) \stackrel{\sim}{\longrightarrow} f\uu{1}(x\otimes y)$
for all $x,y\in \cat D$.
\end{Prop}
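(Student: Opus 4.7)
The plan is to construct a canonical natural morphism $\phi_{x,y}\colon f\uu{1}(x)\otimes f^*(y)\to f\uu{1}(x\otimes y)$ by adjunction, and then verify it is invertible by reducing to rigid-compact $y\in\cat D^c$, where it follows from the conjugate projection formula~\eqref{eq:co-RPF} together with rigidity.

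First I would construct the map as the mate, under $f_*\adj f\uu{1}$, of the composite
\begin{equation*}
f_*\bigl(f\uu{1}(x)\otimes f^*(y)\bigr)\,\underset{\pi\inv}{\cong}\, y\otimes f_*f\uu{1}(x)\,\xrightarrow{\;y\otimes\varepsilon_x\;}\, y\otimes x\,\cong\, x\otimes y,
\end{equation*}
where $\pi$ is the projection formula~\eqref{eq:right_proj_formula} applied with $y\in\cat D$ and $f\uu{1}(x)\in\cat C$, and $\varepsilon\colon f_*f\uu{1}\to\Id_{\cat D}$ is the counit. This construction makes $\phi_{x,y}$ natural in both variables.

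Next I would reduce to the case $y\in\cat D^c$. The hypothesis that $f_*$ preserves compacts implies, by Proposition~\ref{prop:cpt_coprod}, that $f\uu{1}$ preserves coproducts. Combined with our standing assumption that $f^*$ preserves coproducts (Hypothesis~\ref{hyp:base}), both functors $y\mapsto f\uu{1}(x)\otimes f^*(y)$ and $y\mapsto f\uu{1}(x\otimes y)$ are exact and coproduct-preserving from $\cat D$ to $\cat C$. Hence, by Remark~\ref{rem:comp-gen}, it suffices to verify that $\phi_{x,y}$ is an isomorphism for rigid-compact $y\in\cat D^c$.

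For such $y$, both $y$ and $f^*(y)$ are rigid (since the strong monoidal functor $f^*$ preserves rigids and their duals, cf.~\cite[\S III.1]{LMS86}), so $\ihomcat{D}(y,-)\cong\dual(y)\otimes -$ and $\ihomcat{C}(f^*y,-)\cong f^*\dual(y)\otimes -$. Substituting $a:=\dual(y)$ in formula~\eqref{eq:co-RPF} yields
\begin{equation*}
f\uu{1}(y\otimes x)\cong f\uu{1}\ihomcat{D}(\dual y,x)\cong\ihomcat{C}(f^*\dual y,f\uu{1}x)\cong f^*(y)\otimes f\uu{1}(x),
\end{equation*}
giving an abstract isomorphism of the right form.

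The main obstacle is then to verify that this computed isomorphism actually agrees with $\phi_{x,y}$ and is not merely an abstract isomorphism of the same two objects. Here I would exploit the construction of~\eqref{eq:co-RPF} as given in the proof of Proposition~\ref{Prop:right_proj_formula}: it is precisely the right adjoint of the projection formula~\eqref{eq:right_proj_formula} (in the variable $y\in\cat C$). Thus, tracing through the rigidity identifications, the composite isomorphism above is the mate, under $f_*\adj f\uu{1}$, of the very same composite that defines $\phi_{x,y}$; the verification reduces to two routine triangle-identity diagram chases — one for the $\pi\inv$ step and one for the $\varepsilon_x$ step — both of which are straightforward once one writes out the rigidity evaluation $y\otimes\dual y\to\unit$ and its image under $f^*$. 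This identifies $\phi_{x,y}$ with an isomorphism on all $y\in\cat D^c$, and hence by the reduction above, for all $y\in\cat D$.
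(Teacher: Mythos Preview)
Your proposal is correct and follows essentially the same strategy as the paper: construct the canonical comparison map via the projection formula and the counit of $f_*\dashv f\uu{1}$, reduce to rigid $y\in\cat D^c$ using that $f\uu{1}$ preserves coproducts (Prop.~\ref{prop:cpt_coprod}), and then verify invertibility there. The only cosmetic difference is that for rigid $y$ you invoke the already-proved conjugate formula~\eqref{eq:co-RPF} together with rigidity identifications, whereas the paper redoes this step by directly computing $\cat C(z,-)$ on both sides via a Yoneda-style chain; unpacking~\eqref{eq:co-RPF} and the rigidity isomorphisms yields exactly that chain, and in both versions the identification with the canonical map is left as a routine (if tedious) diagram chase.
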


\begin{proof}
The natural comparison map $f\uu{1}(x) \otimes f^*(y) \to f\uu{1}(x\otimes y)$ can always be constructed out of the counit $\eps : f_*f\uu{1}\to \Idcat{C}$ of the adjunction $f_*\dashv f\uu{1}$ as follows:
\begin{align*}
\eps_x \otimes \id_y
&\;\; \in \;\;  \cat D (f_*f\uu{1}(x) \otimes y, x\otimes y) &&  \\
&\;\; \cong \;\;  \cat D(f_*(f\uu{1}(x) \otimes f^*(y) ), x\otimes y) && \textrm{projection formula Prop.\,\ref{Prop:right_proj_formula}} \\
&\;\; \cong \;\;  \cat C  (f\uu{1}(x) \otimes f^*(y) , f\uu{1}(x\otimes y)) && \textrm{adjunction } f_*\dashv f\uu{1} \,.
\end{align*}
If $y\in \cat D^c$ is rigid, we have for all $z\in \cat C$ a natural isomorphism:
\begin{align*}
\cat C(z, f\uu{1}(x) \otimes f^*(y))
&\;\;\cong\;\;  \cat C(z\otimes \dual f^*(y) , f\uu{1}(x))&& f^*(y) \textrm{ is rigid} \\
&\;\;\cong\;\;  \cat C(z\otimes f^*\dual(y) , f\uu{1}(x)) && f^*\dual \cong \dual f^*\textrm{ on rigids} \\
&\;\;\cong\;\;  \cat D(f_*(z\otimes f^*\dual(y)) , x) && \textrm{adjunction } f_*\dashv f\uu{1} \\
&\;\;\cong\;\; \cat D(f_*(z)\otimes \dual(y) , x) && \textrm{projection formula Prop.\,\ref{Prop:right_proj_formula}} \\
&\;\;\cong\;\;  \cat D(f_*(z), x\otimes y) && y \textrm{ is rigid} \\
&\;\;\cong\;\; \cat C (z, f\uu{1}(x\otimes y)) && \textrm{adjunction } f_*\dashv f\uu{1}.
\end{align*}
A tedious but straightforward diagram chase verifies that this isomorphism is merely post-composition by the general comparison map
	$f\uu{1}(x) \otimes f^*(y) \to f\uu{1}(x \otimes y)$
	previously defined.
	Hence, by Yoneda, we conclude that the general comparison map is an isomorphism whenever $y$ is rigid.
By Proposition~\ref{prop:cpt_coprod}, the hypothesis on~$f_*$ is equivalent to $f\uu{1}$ preserving coproducts.
Hence both sides of the comparison map $f\uu{1}(x) \otimes f^*(y) \to f\uu{1}(x\otimes y)$ are coproduct-preserving exact functors in both variables. Hence this comparison map is invertible for all $x,y\in \cat D$ (Remark~\ref{rem:comp-gen}).
\end{proof}

We are now ready to prove our generalized Grothendieck-Neeman duality theorem.
Recall from Definition~\ref{def:DO} that $\DO:= f\uu{1}(\unit)\in \cat C$ is the \emph{relative dualizing object} associated with the given functor $f^*:\cat D\to \cat C$.

\begin{Thm}
\label{thm:GN}%
Let $f^*: \cat D\to \cat C$ be as in our basic Hypothesis~\ref{hyp:base} and consider the adjoints $f^*\adj f_*\adj f\uu{1}$ (Cor.\,\ref{cor:base}).
Then the following conditions are equivalent:
\begin{enumerate}[\indent\rm(a)]
\item\label{it:a1}
The functor $f^*$ admits a left adjoint $f\dd{1}$.
\item\label{it:b1}
The functor $f^*$ preserves products.
\item\label{it:c1}
The functor $f\uu{1}$ admits a right adjoint~$f\dd{-1}$.
\item\label{it:d1}
The functor $f\uu{1}$ preserves coproducts.
\item\label{it:e1}
The functor $f_*$ preserves compact objects.
\end{enumerate}
Furthermore, if~\eqref{it:a1}-\eqref{it:e1} hold true then $f\dd{1}\adj f^*\adj f_*\adj f\uu{1}\adj f\dd{-1}$ satisfy the following additional relations given by canonical natural isomorphisms:
\begin{flalign}
&\kern5em & \Aboxed{f\uu{1} & \cong \DO \otimes f^*(-)} \label{eq:Groth}& \textrm{(Grothendieck duality)}
\\
&& f\dd{-1} & \cong f_*\,\ihomcat{C}(\DO,-) \label{eq:co-Groth}
\\[1em]
&& f\uu{1}(x \otimes y) & \cong f\uu{1}(x) \otimes f^*(y) \label{eq:gen_Groth} && \\
&& \ihomcat{D}(x ,  f\dd{-1} y) &\cong f_* \, \ihomcat{C}(f\uu{1} x, y) \label{eq:gen_co-Groth}
  && \\
&& \ihomcat{D}(x,f\dd{-1}y) &\cong f\dd{-1} \,\ihomcat{C}(f^*x, y) \label{eq:pseudo-inthom} &&
\end{flalign}
\begin{flalign}
&&\kern10.1em f^*(-) & \cong \ihomcat{C}(\DO, f\uu{1}(-)) \label{eq:co-Wirth} &&
\\
&& \Aboxed{\ f\dd{1}(-) & \cong f_*(\DO\otimes -) \vphantom{f^f_f}\ }  \label{eq:ur-Wirth1} & \textrm{(ur-Wirthm\"uller)}
\\[1em]
&& \Aboxed{\ x \otimes f\dd{1}(y) & \cong f\dd{1}(f^*(x)\otimes y) \vphantom{f^f_f}\ } \label{eq:left_proj_formula} & \textrm{(left projection formula)}
\\
&& f^*\ihomcat{D}(x,y)  & \cong \ihomcat{C}(f^*x, f^*y)  \label{eq:f*-closed} \\
&& \ihomcat{D}(f\dd{1}x, y) &\cong  f_*\, \ihomcat{C} (x , f^* y). \label{eq:co-left_proj_formula}
\end{flalign}
\end{Thm}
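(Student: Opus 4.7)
The plan is to establish the equivalences (a)--(e) first and then derive the list of formulas. The equivalences (a)$\Leftrightarrow$(b) and (c)$\Leftrightarrow$(d) are immediate from Brown representability (Corollary~\ref{cor:adjoints}), and (d)$\Leftrightarrow$(e) follows from Proposition~\ref{prop:cpt_coprod} applied to the automatic adjoint pair $f_*\dashv f\uu{1}$. The implication (a)$\Rightarrow$(e) is precisely Lemma~\ref{Lem:f_*-cpts}. The genuinely new content is therefore the converse (e)$\Rightarrow$(b).

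For this converse I would argue by self-duality on compact objects. Assuming $f_*$ preserves compacts, the adjunction $f^*\dashv f_*$ restricts to an adjunction $f^*|_{\cat{D}^c}\dashv f_*|_{\cat{C}^c}$ on the respective subcategories of compact/rigid objects. Since each of $\cat{D}^c$ and $\cat{C}^c$ is self-dual via $\dual=\ihom(-,\unit)$ (Remark~\ref{Rem:selfduality_on_cpts}), and since the strong monoidal functor $f^*$ commutes with $\dual$ on rigid objects, transferring this adjunction through the two self-dualities produces a \emph{left} adjoint $E_0:=\dual_{\cat{D}}\circ f_*|_{\cat{C}^c}\circ\dual_{\cat{C}}\colon\cat{C}^c\to\cat{D}^c$ to $f^*|_{\cat{D}^c}$. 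Invoking Lemma~\ref{Lem:trick_with_adjoints}(b) with $F=f^*$ and this partial left adjoint~$E_0$ then lifts its existence to the global statement that $f^*\colon\cat{D}\to\cat{C}$ preserves products, which is (b). This self-duality trick, hinging on the subtle partial-adjoint computation in Lemma~\ref{Lem:trick_with_adjoints}(b), is the technical heart of the theorem and the main obstacle in the proof.

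For the formulas, the master identity is Proposition~\ref{Prop:midstep}, which under the equivalent conditions gives~\eqref{eq:gen_Groth}: $f\uu{1}(x)\otimes f^*(y)\cong f\uu{1}(x\otimes y)$; specializing $x=\unit$ recovers Grothendieck duality~\eqref{eq:Groth}. The remaining right-hand formulas~\eqref{eq:co-Groth}, \eqref{eq:gen_co-Groth} and~\eqref{eq:pseudo-inthom} are obtained from~\eqref{eq:gen_Groth} together with the internal-hom identities of Proposition~\ref{Prop:right_proj_formula} by systematic taking of adjoints in the various slots, following the conjugation principle of Remark~\ref{Rem:conjugate_formulas}. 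For the ur-Wirthm\"uller formula~\eqref{eq:ur-Wirth1} $f\dd{1}\cong f_*(\DO\otimes-)$, I would re-apply the self-duality trick from the previous paragraph, now to the adjunction $f\dd{1}\dashv f^*$ on compacts, obtaining $f\dd{1}|_{\cat{C}^c}\cong\dual_{\cat{D}}\circ f_*|_{\cat{C}^c}\circ\dual_{\cat{C}}$; combining this with the twisted-duality relation~\eqref{eq:i-DO} and the involutivity $\dual^2\cong\Id$ on rigids then yields $f\dd{1}(w)\cong f_*(\DO\otimes w)$ for every compact~$w\in\cat{C}^c$, and the identity extends to all of~$\cat{C}$ by Remark~\ref{rem:comp-gen} since both sides preserve coproducts. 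Formula~\eqref{eq:co-Wirth} is then the right adjoint of~\eqref{eq:ur-Wirth1}, while~\eqref{eq:left_proj_formula}, \eqref{eq:f*-closed}, and~\eqref{eq:co-left_proj_formula} follow by further conjugation using the monoidality of~$f^*$.
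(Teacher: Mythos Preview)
Your proof of the equivalences (a)--(e) is correct and matches the paper's approach exactly: the paper likewise reduces everything to Lemma~\ref{Lem:f_*-cpts} for (a)$\Rightarrow$(e) and to Lemma~\ref{Lem:trick_with_adjoints}\eqref{it:left-trick} for (e)$\Rightarrow$(b), the latter applied to $F=f^*$ with the partial left adjoint $E_0=\dual f_*\dual$ exactly as you describe. Your derivation of \eqref{eq:gen_Groth}, \eqref{eq:Groth} and their conjugates \eqref{eq:co-Groth}, \eqref{eq:gen_co-Groth}, \eqref{eq:pseudo-inthom}, as well as of \eqref{eq:left_proj_formula}--\eqref{eq:co-left_proj_formula} from \eqref{eq:ur-Wirth1} and the right projection formula, is also the paper's route.

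There is, however, a genuine gap in your argument for the ur-Wirthm\"uller isomorphism~\eqref{eq:ur-Wirth1}. Your duality trick correctly yields a natural isomorphism $f\dd{1}(w)\cong f_*(\DO\otimes w)$ for $w\in\cat C^c$, but Remark~\ref{rem:comp-gen} only lets you conclude that a \emph{globally defined} natural transformation is invertible once you know it is so on compacts; it does not manufacture the global transformation for you. Your isomorphism on compacts is assembled from pieces (uniqueness of adjoints on $\cat C^c$, the identity $\dual_{\DO}\dual(w)\cong \DO\otimes w$ for rigid~$w$) that do not extend verbatim to arbitrary~$w\in\cat C$, and a direct adjoint argument via $f_*(\DO\otimes-)\dashv \ihom(\DO,f\uu{1}(-))$ would require $f^*\cong\ihom(\DO,\DO\otimes f^*(-))$, which is precisely~\eqref{eq:co-Wirth} and hence circular. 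The paper handles this by first writing down an explicit composite
\[
f_*(x\otimes\DO)\xrightarrow{f_*(\eta\otimes1)} f_*(f^*f\dd{1}x\otimes\DO)\overset{\pi}{\cong} f\dd{1}x\otimes f_*f\uu{1}\unit\xrightarrow{1\otimes\eps} f\dd{1}x
\]
defined for \emph{all} $x\in\cat C$, and then carrying out your duality computation on compacts together with a non-trivial diagram chase to verify that this explicit map agrees with the Yoneda-induced isomorphism there. Only then does Remark~\ref{rem:comp-gen} apply. You should supply such a global candidate map (the one above is the natural choice) and indicate that the comparison with your $\dual f_*\dual$ description requires verification.
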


\begin{Rem}
The existence of \emph{any} natural isomorphism as in Grothendieck duality~\eqref{eq:Groth} implies that $f\uu{1}$ preserves coproducts (\ie property~\eqref{it:d1} holds). Hence~\eqref{eq:Groth} is not only a consequence of, but is equivalent to, conditions~\eqref{it:a1}-\eqref{it:e1} of the theorem. Similarly, the more general~\eqref{eq:gen_Groth} is also equivalent to~\eqref{it:a1}-\eqref{it:e1}. Finally, if there exists \emph{any} isomorphism as in~\eqref{eq:co-Wirth}, then $f^*$ must preserve products, since so do the left adjoints $\ihomcat{C}(\DO,-)$ and $f\uu{1}$. Hence~\eqref{eq:co-Wirth} is also an equivalent condition for Theorem~\ref{thm:GN} to hold. We note this for completeness but it is unlikely that such isomorphisms can be established in practice \emph{before} any of~\eqref{it:a1}-\eqref{it:e1} is known.
\end{Rem}

\begin{Rem} \label{Rem:conj_sets_in_GNthm}
We will see in the proof that each group of equations in~\eqref{eq:Groth}-\eqref{eq:co-left_proj_formula}, as displayed above, forms a conjugate set of formulas in the sense of Remark~\ref{Rem:conjugate_formulas}.
\end{Rem}

\begin{Rem}
All of the adjunctions $f\dd{1}\adj f^*\adj f_*\adj f\uu{1}\adj f\dd{-1}$ now have an internal realization in~$\cat D$ by~\eqref{eq:co-left_proj_formula},~\eqref{eq:inthom-f^*-f_*},~\eqref{eq:inthom-f_*-f^1}, and~\eqref{eq:gen_co-Groth}, respectively.
\end{Rem}

\begin{Rem} \label{Rem:computer}
We can further combine the fundamental formulas of Theorem~\ref{thm:GN}, for instance by composing Grothendieck duality~\eqref{eq:Groth} and the ur-Wirthm\"uller~\eqref{eq:ur-Wirth1} isomorphism and then variating by conjugation:
\begin{flalign*}
 f\dd{1} (f^*(x)\otimes y) &\cong f_*(f\uu{1}(x)\otimes y) 
  \\
 \ihomcat{C}(f^*x,f^*y) &\cong \ihomcat{C}(f\uu{1}x, f\uu{1}y) 
  \\
 f_*\, \ihomcat{C}(x, f^*y) &\cong f\dd{-1} \ihomcat{C}(x, f\uu{1}y)
  \,.
\end{flalign*}
Or we may plug Grothendieck duality into ur-Wirthm\"uller's adjunct~\eqref{eq:co-Wirth} to obtain
\begin{flalign*}
f^* &\cong \ihomcat{C}(\DO,\DO\otimes f^*(-))
\end{flalign*}
which, when applied to the tensor unit, specializes to the important relation
\begin{flalign}
 \unit_{\cat C} &\cong \ihomcat{C}(\DO,\DO) \label{eq:ihom(w,w)1} \,.
\end{flalign}
Further combinations of the original formulas are left to the interested reader.
\end{Rem}

\begin{proof}[Proof of Theorem~\ref{thm:GN}]
We already know that~\eqref{it:a1}\IFF\eqref{it:b1} and~\eqref{it:c1}\IFF\eqref{it:d1}\IFF\eqref{it:e1} by Brown representability (Cor.\,\ref{cor:adjoints}) and by Proposition~\ref{prop:cpt_coprod}. We also isolated the non-obvious parts of the equivalences in Lemma~\ref{Lem:trick_with_adjoints}\,\eqref{it:left-trick} and Lemma~\ref{Lem:f_*-cpts}, which give~\eqref{it:e1}$\then$\eqref{it:b1} and~\eqref{it:a1}$\then$\eqref{it:e1} respectively. So we can assume that~\eqref{it:a1}-\eqref{it:e1} hold true and we now turn to proving Formulas~\eqref{eq:Groth}-\eqref{eq:co-left_proj_formula}.

Proposition~\ref{Prop:midstep} already gives~\eqref{eq:gen_Groth}, which then specializes to~\eqref{eq:Groth} by setting $x:=\unit_\cat D$. We now construct the canonical ur-Wirthm\"uller isomorphism~\eqref{eq:ur-Wirth1}.
For every $x\in \cat C$, consider the composite
\begin{equation}
\label{eq:explicit-ur-W}%
f_*(x \otimes \DO) \xrightarrow{f_*(\eta \otimes 1)}f_*(f^* f\dd{1} x \otimes \DO) \cong f\dd{1}x \otimes f_*\DO =f\dd{1}x \otimes f_*f\uu{1} \unit \xrightarrow{1 \otimes \eps} f\dd{1}x
\end{equation}
where the middle map is the right projection formula~\eqref{eq:right_proj_formula} and $\eta:\Id\to f^*f\dd{1}$ and $\eps:f_*f\uu{1}\to \Id$ are the unit and counit of these  adjunctions. By Remark~\ref{rem:comp-gen}, it suffices to show that this map is an isomorphism for $x\in\cat C^c$ compact, because both ends of~\eqref{eq:explicit-ur-W} preserve coproducts in~$\cat C$ (being composed of left adjoints).
Now, for every $d\in \cat D$ and for $x\in \cat C^c$ rigid we compute:
\begin{align*}
\cat D(d, f_*(x\otimes \DO))
&\;\;\cong\;\; \cat C(f^*(d), x\otimes \DO)&& f^*\dashv f_*\\
&\;\;\cong\;\; \cat C(\dual(x)\otimes f^*(d),  \DO) &&  x\in \cat C^c\textrm{ is rigid}\\
&\;\;\cong\;\; \cat D(f_*(\dual x \otimes f^*d), \unit) && \DO=f\uu{1}(\unit)\ \textrm{and}\ f_*\dashv f\uu{1}\\
&\;\;\cong\;\; \cat D( f_*(\dual x) \otimes d, \unit) && \text{projection formula~\eqref{eq:right_proj_formula} }\\
&\;\;\cong\;\; \cat D(d,\dual f_*\dual(x)) && f_*\dual(x)\in\cat D^c\textrm{ by Lemma~\ref{Lem:f_*-cpts}} \\
&\;\;\cong\;\; \cat D(d, f\dd{1}(x)). &&
\end{align*}
The last isomorphism holds because, thanks to~\eqref{it:e1}, the adjunction $f\dd{1}\adj f^* \adj f_*$ restricts to the categories of compact objects,  so that the dual~$\dual$ intertwines the two restricted functors: $\dual f_*\dual\cong f\dd{1}$ on~$\cat C^c$. By Yoneda, we obtain from the above an isomorphism $f_*(x\otimes \DO)\cong f\dd{1}(x)$ for $x\in\cat C^c$ and we ``only" need to show that it coincides with the canonical map~\eqref{eq:explicit-ur-W}. This is an adventurous diagram chase that we outline in more detail this time, since it might be harder to guess.

Following through the chain of isomorphisms, we can reduce the problem to checking the commutativity of the following diagram:
\begin{equation}\label{eq:dia_urWirth}
\begin{gathered}
\xymatrix{
  f_*(x\otimes \DO) \ar[r]^-{\mathrm{coev}\otimes 1} \ar[d]_-{f_*(\eta\otimes 1)}
& \dual f_*\dual x\otimes f_*\dual x\otimes f_*(x\otimes \DO)
\ar[d]^{1 \otimes \mathrm{lax}} \\
f_*(f^*f\dd{1}x\otimes \DO) \ar[d]_{\pi}
& \dual f_*\dual x\otimes f_*(\dual x\otimes x\otimes \DO) \ar[d]^{1\otimes f_* (\mathrm{ev}\otimes 1)}
\\
f\dd{1}x\otimes f_*\DO \ar[r]^{\cong}
& \dual f_*\dual x\otimes f_*\DO
}
\end{gathered}
\end{equation}
where $\pi$ is the projection formula isomorphism~\eqref{eq:right_proj_formula}. Using an explicit description of the bottom isomorphism $f\dd{1}x\cong \dual f_*\dual x$ in terms of the unit and counit of $f\dd{1} \adj f^*$ and the duality maps, one can check that the composite along the left and bottom edges is equal to
$$
\kern-.2em
\xymatrix@C=6em@R=1em{
f_*(x\otimes \DO)\cong f_*(f^*\unit\otimes x\otimes \DO) \ar[r]^-{f_*(f^*\mathrm{coev}\otimes 1\otimes 1)} & f_*(f^*(\dual f_*\dual x\otimes f_*\dual x)\otimes x\otimes \DO) \ar[d]^-{f_*(f^*\pi\otimes 1)} 
\\ & f_*(f^*f_*(f^*\dual f_*\dual x\otimes \dual x)\otimes x\otimes \DO) \ar[d]^-{f_*(\eps\otimes 1)}
\\ & f_*(f^*\dual f_*\dual x\otimes \dual x\otimes x\otimes \DO) \ar[d]^-{f_*(1\otimes \mathrm{ev}\otimes 1)} 
\\ & f_*(f^*\dual f_*\dual x\otimes \DO) \ar[d]^-{\pi} 
\\ & \dual f_*\dual x\otimes f_*\DO.
}
$$
This composite can then be checked to agree with
$$
\xymatrix@R=1em{
f_*(x\otimes \DO)=\unit\otimes f_*(x\otimes \DO) \ar[r]^-{\mathrm{coev}\otimes 1} & \dual f_*\dual x\otimes f_*\dual x\otimes f_*(x\otimes \DO) \ar[d]^-{\pi\otimes 1}
\\ & f_*(f^*\dual f_*\dual x\otimes \dual x)\otimes f_*(x\otimes \DO) \ar[d]^-{\lax}
\\ & f_*(f^*\dual f_*\dual x\otimes \dual x\otimes x\otimes \DO) \ar[d]^-{f_*(1\otimes \mathrm{ev}\otimes 1)}
\\ & f_*(f^*\dual f_*\dual x\otimes \DO) \ar[d]^-{\pi}
\\ & \dual f_*\dual x\otimes f_*\DO.
}
$$
Using this last description, the commutativity of diagram~\eqref{eq:dia_urWirth} can be established.
In carrying out these verifications, the commutativity of the following diagrams
\begin{equation}
\label{eq:lax-pi-auxiliary}
\!\vcenter{\xymatrix@C=1.5em@R=1em{
  f_*a\otimes f_*b \ar[r]^-{\pi} \ar[rd]_-{\lax}
& f_*(f^*f_*a\otimes b) \ar[d]^-{f_*(\eps\otimes 1)}
\\
& f_*(a\otimes b)
}}
\quad\text{and}\quad
\vcenter{\xymatrix@C=2em@R=1em{
  a\otimes f_*b\otimes f_*c \ar[r]^-{\pi\otimes 1} \ar[d]_-{1\otimes \lax}
& f_*(f^*a\otimes b)\otimes f_*c \ar[d]^-{\lax}
\\
  a\otimes f_*(b\otimes c) \ar[r]^-{\pi}
& f_*(f^*a\otimes b\otimes c)
}}\kern-1em
\end{equation}
will prove to be useful. The remaining details are now left to the careful reader.
\goodbreak

We have now established~\eqref{eq:Groth},~\eqref{eq:gen_Groth} and~\eqref{eq:ur-Wirth1}, from which we derive the other ones by the general method of Remark~\ref{Rem:conjugate_formulas}. Taking right adjoints of the functors in~\eqref{eq:ur-Wirth1} yields~\eqref{eq:co-Wirth}. Taking right adjoints of the functors in~\eqref{eq:gen_Groth} for each fixed~$x$ yields~\eqref{eq:gen_co-Groth}, and taking right adjoints for each fixed~$y$ yields~\eqref{eq:pseudo-inthom}. (The left-hand-sides of the two latter formulas coincide, because \mbox{$f\uu{1}(x\otimes y)$} is symmetric in $x$ and~$y$.) On the other hand, taking right adjoints in~\eqref{eq:Groth} yields~\eqref{eq:co-Groth}. Also,~\eqref{eq:ihom(w,w)1} is~\eqref{eq:co-Wirth} evaluated at~$\unit_{\cat D}$.
The left projection formula~\eqref{eq:left_proj_formula} follows by conjugating the right projection formula~\eqref{eq:right_proj_formula} by the ur-Wirthm\"uller isomorphism~\eqref{eq:ur-Wirth1}:
\[
\xymatrix@R=1.5em{
x \otimes f\dd{1}(y) \ar[r]^-\cong \ar@{..>}[d]_-\cong &
  x \otimes f_*( \DO \otimes y ) \ar[d]^-\cong \\
f\dd{1}(f^*(x)\otimes y) \ar[r]^-\cong &
 f_*(f^*(x) \otimes \DO \otimes y).
}
\]
This new two-variable equation~\eqref{eq:left_proj_formula} has two conjugate formulas~\eqref{eq:f*-closed} and~\eqref{eq:co-left_proj_formula}, obtained by taking right adjoints while fixing~$x$ or~$y$, respectively.
\end{proof}

\begin{Exa}[Algebraic geometry]
\label{exa:alg_geom_general}
Let $f:X\to Y$ be a morphism of quasi-compact and quasi-separated schemes, as in Example~\ref{exa:schemes}, and consider the (derived) inverse image functor
$f^*:\cat D = \Der_{\Qcoh}(Y) \to \Der_{\Qcoh}(X) = \cat C$.
It is easy to see that~$f^*$ satisfies our basic Hypothesis~\ref{hyp:base};
its right adjoint is the derived push-forward $f_*= \Rder f_*$, whose right adjoint~$f\uu{1}$ is the twisted inverse image functor, usually written~$f^\times$ or~$f^!$ (see~\cite{Lipman09}).
Then the functor $f^*$ satisfies Grothendieck-Neeman duality precisely when the morphism~$f$ is \emph{quasi-perfect} \cite[Def.\,1.1]{LipmanNeeman07}. Indeed, the latter means by definition that $\Rder f_*$ preserves perfect complexes, \ie compact objects.
In this context, our Theorem~\ref{thm:GN} recovers the original results of Neeman that have inspired us; see~\cite[Prop.\,2.1]{LipmanNeeman07} for a geometric statement in the same generality as we obtain here by specializing our abstract methods. Yet, even when specialized to algebraic geometry, our theorem is somewhat stronger, because it includes the extra information about the left adjoint~$f\dd{1}$ of~$f^*$, whose existence is equivalent to the quasi-perfection of~$f$ and which is necessarily given by the ur-Wirthm\"uller formula $f\dd{1} \cong \DO \otimes f_*$.

The article \cite{LipmanNeeman07} contains a thorough geometric study of quasi-perfection.
Among other things, it is shown that $f$ is quasi-perfect iff it is proper and of finite tor-dimension.
In particular if $f:X\to Y$ is \emph{finite} then it is quasi-perfect iff $f_*(\unit_\cat C)= \Rder f_*(\mathcal O_X)$ is a perfect complex. See Examples~2.2 of \emph{loc.\,cit.}\ for more.
\end{Exa}

\begin{Exa}[Affine case] \label{Exa:AG_affine}
Let $\phi: B\to A$ be a morphism of commutative rings. We may specialize Example~\ref{exa:alg_geom_general} to \mbox{$f:=\Spec(\phi): X:=\Spec(A)\to \Spec(B)=:Y$}. 
(As pointed out in the introduction, we do not use the notation~$\phi^*$ for the restriction of rings~$f_*$, to avoid confusion with extension of scalars~$f^*$.)
Since a proper affine morphism is necessarily finite, we see that $f$ is quasi-perfect if and only if $A$ admits a finite resolution by finitely generated projective $B$-modules.
Since $\Rder f_*  =  f_* \cong {}_BA \otimes_A - $, the right adjoint $f\uu{1}$ has the following reassuring description as a right derived functor: $f\uu{1}= \Rder \Hom_B({}_BA, -)$.
\end{Exa}

\begin{Exa}
	Of course, not all scheme maps $f:X\to Y$ are quasi-perfect. For instance, the affine morphism $f:=\Spec(\phi)$, with $\phi: \bbZ[t]/(t^2)\to \bbZ $ sending the variable $t$ to zero, is not; see \cite[Ex.\,6.5]{Neeman96}. Note that $f$ is finite, but indeed $\Rder f_*(\mathcal O_X)$  is not compact, as $\bbZ$ has infinite projective dimension over $\bbZ[t]/(t^2)$.
\end{Exa}

\begin{Exa}
\label{Exa:warning}%
Let $R$ be a Gorenstein local ring of Krull dimension~$d$ and~$k$ its residue field. Consider as above the morphism $f:\Spec(k)\to \Spec(R)$ and the induced functor $f^*:\Der(R)\to \Der(k)$. It does not satisfy Grothendieck-Neeman duality in general, that is, unless $f_*(k)$ is perfect, \ie $R$ is regular. However, $\DO=f\uu{1}(\unit)=\mathrm{RHom}_R(k,R)\simeq \Sigma^{-d}k$ because $R$ is Gorenstein, see~\cite[\S\,18]{Matsumura86}. In this case, $\DO$ is invertible despite failure of Grothendieck-Neeman duality.
\end{Exa}

\begin{Exa} \label{Exa:Brave_New}
The affine situation of Example~\ref{Exa:AG_affine} can be generalized in the Brave New direction (or in the Differential Graded direction), as in Example~\ref{exa:BraveNew}.  That is, we may consider $\phi: B\to A$ to be a morphism of commutative $\Sphere$-algebras (or commutative dg-rings).  One can check that $\phi$ induces a functor $f^*:=A\otimes_B-\colon \Der(B)\to \Der(A)$ satisfying our basic Hypotheses.  As before, its right adjoint $f_*$ is obtained simply by considering $A$-modules as $B$-modules through~$\phi$ and the next right adjoint $f\uu{1}$ is given by the formula $f\uu{1}= \Hom_B(A,-)$.  (All functors considered here are derived from appropriate Quillen adjunctions.) Since $\Der(A)^c$ is the thick subcategory generated by~$A$, we see by Neeman's criterion that, as before, $f^*$ satisfies Grothendieck-Neeman duality iff $f_*(A)$ is compact. 
\end{Exa}

\begin{Exa}
\label{Exa:poly-algebra}
Let $k$ be a field and consider the inclusion $\phi:k[x^n]\hook k[x]$ of graded $k$-algebras.  Since $k[x]$ is a free $k[x^n]$-module with homogeneous basis $\{1,x,\ldots,x^{n-1}\}$ we see that $f_*(\unit)$ is compact, and hence $f^*$ satisfies Grothendieck-Neeman duality.  Moreover, one easily checks that $\DO = \Hom_{k[x^n]}(k[x],k[x^n]) \cong \Sigma^{n-1} k[x] = \Sigma^{n-1}\unit$.  From a more abstract point of view, $f_*(\unit) \cong \bigoplus_{i=0}^{n-1} \Sigma^{-i} \unit$, and since $\Der(k[x])$ is generated by the unit, the relative dualizing object $\DO$ is characterized (cf.~\eqref{eq:i-DO}) by $f_*(\DO) \cong [f_*(\unit),\unit] = [\bigoplus_{i=0}^{n-1} \Sigma^{-i} \unit,\unit] = \bigoplus_{i=0}^{n-1} \Sigma^{i} \unit$.  Hence $\DO \cong \Sigma^{n-1}\unit$.  Note that if we regard $k[x^n] \to k[x]$ as a map of ungraded commutative rings and consider the extension-of-scalars functor $f^*$ between ordinary derived categories, then $\DO\cong\unit$.
\end{Exa}

\bigbreak
\section{The Wirthm\"uller isomorphism}
\label{se:Wirth}%

When we are in the Grothendieck-Neeman situation, \ie when we have five adjoints $f\dd{1}\adj f^*\adj f_*\adj f\uu{1}\adj f\dd{-1}$, the relative dualizing object~$\DO$ is remarkably ``close'' to being $\otimes$-invertible, a fact which perhaps deserves separate statement.

\begin{Prop} \label{Prop:rigid_implies_invertible}
Assume Grothendieck-Neeman duality (Thm.\,\ref{thm:GN}).
Then $f_*(\DO)$ is compact in~$\cat D$.
Moreover, $\DO$ is compact in~$\cat C$ if and only if it is $\otimes$-invertible.
\end{Prop}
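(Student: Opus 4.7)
The plan is to deduce both assertions directly from the formulas collected in Theorem~\ref{thm:GN}, especially the ur-Wirthm\"uller isomorphism~\eqref{eq:ur-Wirth1} and the derived identity~\eqref{eq:ihom(w,w)1}, together with the characterization compact~$=$~rigid in $\cat C$ and~$\cat D$.

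First I would evaluate the ur-Wirthm\"uller isomorphism $f\dd{1}(-) \cong f_*(\DO \otimes -)$ at the tensor unit $\unit_{\cat C}$ to obtain a canonical isomorphism $f_*(\DO) \cong f\dd{1}(\unit_{\cat C})$. Now $f\dd{1}$ is a left adjoint to the coproduct-preserving functor~$f^*$, and hence by Proposition~\ref{prop:cpt_coprod} it preserves compact objects. Since $\unit_{\cat C}\in \cat C^c$ by our standing hypothesis that $\cat C$ is rigidly-compactly generated, we conclude that $f_*(\DO)\cong f\dd{1}(\unit_{\cat C})\in \cat D^c$. This settles the first claim.

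For the second claim, one direction is immediate: in a rigidly-compactly generated category any $\otimes$-invertible object is rigid, hence compact (cf.\ Remark~\ref{Rem:selfduality_on_cpts}). Conversely, suppose $\DO\in\cat C$ is compact, and therefore rigid. Then by the defining property of rigidity, the canonical map $\dual(\DO)\otimes \DO \to \ihomcat{C}(\DO,\DO)$ is an isomorphism, where $\dual(\DO) = \ihomcat{C}(\DO,\unit_{\cat C})$. Combining this with the isomorphism $\unit_{\cat C}\cong \ihomcat{C}(\DO,\DO)$ from~\eqref{eq:ihom(w,w)1}, we obtain $\dual(\DO)\otimes \DO \cong \unit_{\cat C}$, exhibiting $\DO$ as a $\otimes$-invertible object with inverse $\dual(\DO)$.

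There is no real obstacle here; the content of the proposition is entirely encoded in two formulas already available from Theorem~\ref{thm:GN}, and the only conceptual input is the identification of compact and rigid objects in $\cat C$. The first assertion is a one-line consequence of ur-Wirthm\"uller applied to~$\unit$, while the second is a formal manipulation of the relation $\ihomcat{C}(\DO,\DO)\cong\unit_{\cat C}$ using rigidity.
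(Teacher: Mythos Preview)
Your proposal is correct and follows essentially the same argument as the paper: evaluate ur-Wirthm\"uller at~$\unit$ and use Proposition~\ref{prop:cpt_coprod} for the first claim, then combine rigidity of~$\DO$ with~\eqref{eq:ihom(w,w)1} for the second.
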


\begin{proof}
We have $f_*(\DO)\cong f\dd{1}(\unit)$ by the ur-Wirthm\"uller isomorphism~\eqref{eq:ur-Wirth1}. Moreover, $\unit$ is assumed to be compact and $f\dd{1}$ preserves compact objects by Proposition~\ref{prop:cpt_coprod}, because its right adjoint~$f^*$ preserves coproducts by hypothesis.

Invertible objects are always rigid, hence compact under our assumptions.
Conversely, if $\DO$ is rigid there is an isomorphism $\dual(\DO) \otimes \DO \stackrel{\sim}{\to} \ihomcat{C}(\DO,\DO)$ and therefore by~\eqref{eq:ihom(w,w)1} an isomorphism $\dual(\DO)\otimes \DO\simeq \unit$, hence $\DO$ is invertible.
\end{proof}

We are now ready to prove Theorem~\ref{thm:Wirthmueller}, which abundantly characterizes the situations when~$\DO$ \emph{does} become invertible.

\begin{proof}[Proof of the Wirthm\"uller isomorphism Theorem~\ref{thm:Wirthmueller}]
The equivalent formulations of~(W\ref{it:far-left}) hold by Corollary~\ref{cor:adjoints} and similarly for~(W\ref{it:far-right}), together with Proposition~\ref{prop:cpt_coprod}. The equivalence
(W\ref{it:w-cpt})$\Leftrightarrow$(W\ref{it:w-inv}) holds by Proposition~\ref{Prop:rigid_implies_invertible}.
If $f\uu{1}$ preserves compacts then obviously $\DO=f\uu{1}(\unit)$ is compact, hence we have (W\ref{it:far-right})$\Rightarrow$(W\ref{it:w-cpt}).
Conversely, we can see from Grothendieck duality $f\uu{1}\cong \DO \otimes f^*$ that (W\ref{it:w-cpt})$\Rightarrow$(W\ref{it:far-right}), as our $f^*$ always preserves compacts.

Let us show (W\ref{it:far-left})$\Rightarrow$(W\ref{it:far-right}). Thus we now have six adjoints $f\uu{-1}\dashv f\dd{1}\dashv f^* \dashv f_*\dashv f\uu{1}\dashv f\dd{-1}$ and we want to show that $f\uu{1}$ preserves compacts.
Since $f\uu{-1}$, $f\dd{1}$, $f^*$ and $f_*$ have two-fold right adjoints they must preserve compacts by Proposition~\ref{prop:cpt_coprod} (their right adjoints preserve coproducts).
By restricting to compacts, we have four consecutive adjoints $f\uu{-1}|_{\cat D^c} \;\dashv\; f\dd{1} |_{\cat C^c} \;\dashv\; f^*|_{\cat D^c} \;\dashv\; f_*|_{\cat C^c}$. Now recall from Remark~\ref{Rem:selfduality_on_cpts} that $\dual=\ihom(-,\unit)$ defines a duality on compact objects, hence conjugating with it turns left adjoints into right adjoints. Furthermore, since $f^*\dual=\dual f^*$, the original functor $f^*|_{\cat D^c}$ is fixed by conjugation by~$\dual$. The above four adjoints therefore yield (several isomorphisms, like $\dual(f\dd{1}|_{\cat C^c})\dual\cong f_*|_{\cat C^c}$, and) the following five consecutive adjoints between $\cat D^c$ and~$\cat C^c$
\[
f\uu{-1}|_{\cat D^c}
\;\dashv\; f\dd{1} |_{\cat C^c}
\;\dashv\; f^*|_{\cat D^c}
\;\dashv\; f_*|_{\cat C^c}
\;\dashv\; \dual(f\uu{-1} |_{\cat D^c})\dual \,.
\]
The right-most functor is the unpredicted one. We can now apply Lemma~\ref{Lem:trick_with_adjoints}\,\eqref{it:right-trick} for $F:=f_*$ to show that its right adjoint $f\uu{1}$ preserves compacts, as desired.

Clearly (W\ref{it:w-inv})$\Rightarrow$(W\ref{it:Wirth}) because of the ur-Wirthm\"uller isomorphism~\eqref{eq:ur-Wirth}. Now assume~(W\ref{it:Wirth}) instead, \ie that there exists a $\otimes$-invertible $\omega\in\cat C$ such that $f_*(\omega\otimes-)$ is left adjoint to~$f^*$. Then the formulas~\eqref{eq:twists} make sense with $\omega$ instead of~$\DO$, yielding well-defined functors $f\uu{n}:\cat D\to \cat C$ and $f\dd{n} : \cat C\to \cat D$ for all $n\in \bbZ$:
\begin{align*}
 f\uu{n} := \omega^{\otimes n} \otimes f^*
\quad \quad
f\dd{n} := f_*(\omega^{\otimes n}\otimes -)
\quad
 \textrm{ with }
f\uu{0}:=f^*
\textrm{ and }
f\dd{0}:= f_* \,.
\end{align*}
Moreover, for all $n\in \bbZ$ we obtain the required adjunctions
$f\uu{n}\dashv f\dd{-n} \dashv f\uu{n+1}$
by variously composing the adjunction $f^*\dashv f_*$ with the appropriate power of $(\DO\otimes -) \dashv (\DO\inv \otimes -)$ or $(\DO\inv \otimes -) \dashv (\DO \otimes -)$.
Thus (W\ref{it:Wirth})$\Rightarrow$(W\ref{it:ad-lib}).

If~(W\ref{it:ad-lib}) holds then we have (W\ref{it:far-left})-(W\ref{it:far-right}) because then every functor in the tower must preserve products, coproducts and compacts.
The uniqueness of adjoints implies that whenever $f^*\dashv f_*$ sprouts a doubly infinite tower of adjoints this is necessarily given by the formulas~\eqref{eq:twists}, because in that case $\DO$ is invertible.

As the alert (or record-keeping) reader must have noticed, we have proved that the conditions (W\ref{it:far-left})-(W\ref{it:ad-lib}) are all equivalent, and that they imply~\eqref{eq:twists}. It remains to verify the claimed sufficient conditions, \ie the ``finally'' part.

For completeness, let us first recall (see Lemma~\ref{Lem:faithful_vs_cofinal} below) that $f_*$ is faithful if and only if $f^*$ is surjective on objects, up to direct summands.  Moreover, if this is the case then the counit $\eps_x:f^*f_*(x)\to x$ is a split epi for all $x\in \cat C$.  Therefore if $f_*(x)$ is compact then $x$ must be as well, because $f^*$ preserves compacts.  This shows the implication (1)$\Rightarrow$(2).  Clearly~(2) implies~(3), as they have the same conclusion but the hypothesis in~(2) is weaker.  To conclude the proof of the theorem, it suffices to show that~(3) implies~(W\ref{it:w-cpt}).  As we are in the context of Grothendieck-Neeman duality, we can use the ur-Wirthm\"uller equation
$ f_*(\DO\otimes x) \cong  f\dd{1}(x) $.
Since $f\dd{1}$ preserves compacts, it implies that $f_*(\DO\otimes x)$ is compact whenever $x$ is a compact object of~$\cat C$, so by~(3) we can conclude that $\DO$ is compact, that is~(W\ref{it:w-cpt}).
\end{proof}

\begin{Lem} \label{Lem:faithful_vs_cofinal}
Let $F:\cat S \adjto \cat T: G$ be adjoint exact functors between triangulated categories.
Then $G$ is faithful if and only if $F$ is surjective up to direct summands, that is, if and only if every object $x\in \cat T$ is a retract of $F(y)$ for some $y\in \cat S$. Moreover, this is equivalent to the counit of adjunction admitting a (possibly unnatural) section at each object.
\end{Lem}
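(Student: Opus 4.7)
The plan is to route all three conditions through a single statement about the counit $\epsilon\colon FG\to\Idcat{T}$ of the adjunction. First I would invoke the classical categorical fact that for any adjunction $F\dashv G$, the right adjoint $G$ is faithful if and only if the counit $\epsilon_x\colon FG(x)\to x$ is an epimorphism for every $x\in\cat T$ (see, e.g., Mac\,Lane's book on categories). Next I would exploit the well-known feature of triangulated categories that every epimorphism is automatically split: completing any epi $f\colon X\to Y$ to an exact triangle $Z\to X\xra{f}Y\xra{g}TZ$ gives $gf=0$, hence $g=0$ since $f$ is epi, and this forces the triangle to split. Combining the two observations, $G$ is faithful if and only if $\epsilon_x$ admits a section at every object $x\in\cat T$, which is already one of the two equivalences claimed in the lemma.

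It remains to bridge this characterization with the retract condition. The easy direction is immediate: a section of $\epsilon_x\colon F(G(x))\to x$ visibly exhibits $x$ as a retract of $F(y)$ with $y:=G(x)\in\cat S$. For the converse, suppose $x$ is a retract of some $F(y)$ via morphisms $i\colon x\to F(y)$ and $p\colon F(y)\to x$ with $p\circ i=\id_x$. I would apply the adjunction isomorphism to write $p=\epsilon_x\circ F(\widetilde p)$, where $\widetilde p\colon y\to G(x)$ is the transpose of $p$ under $F\dashv G$; then $F(\widetilde p)\circ i\colon x\to FG(x)$ is a section of $\epsilon_x$, closing the loop.

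I do not anticipate any substantive obstacle: the proof is a clean composite of two standard facts (the counit characterization of faithfulness of a right adjoint, and the splitting of epimorphisms in triangulated categories), glued by one line of adjunction bookkeeping. The only mild point of care is organizing the three equivalences so that the section/retract correspondence is articulated explicitly, rather than left implicit in an invocation of ``an easy diagram chase.''
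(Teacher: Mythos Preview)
Your proposal is correct and follows essentially the same route as the paper: both arguments hinge on showing that faithfulness of $G$, surjectivity of $F$ up to retracts, and objectwise splitting of the counit $\epsilon$ are all equivalent. The organization differs only cosmetically: you invoke ``epimorphisms split in triangulated categories'' and Mac\,Lane's counit criterion as black boxes and then handle the retract $\Leftrightarrow$ section step via the adjunction transpose $p=\epsilon_x\circ F(\tilde p)$, whereas the paper unwinds the triangle argument by hand for one direction and, for the converse, uses instead that $\epsilon_{F(y)}$ is split epi by the unit--counit relation together with additivity of $\epsilon$ to deduce that $\epsilon_x$ is epi on each summand.
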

\begin{proof}(Cf.\,\cite[Prop.\,2.10]{Balmer11}.)
For any $x\in \cat T$, let $FG(x)\stackrel{\eps_x\;\;}{\to} x \stackrel{\alpha}{\to} y \to \Sigma FG(x)$ be an exact triangle containing $\eps_x: FG(x)\to x$, the counit of adjunction at~$x$.  Then $G(\eps_x)$ is a split epi by one of the unit-counit relations.  Also, $\alpha\eps_x=0$ hence $G(\alpha)G(\eps_x)=0$. Together these facts imply $G(\alpha)=0$.  Now if $G$ is faithful we have $\alpha=0$ and therefore, by the exact triangle, $\eps_x$ is a split epi (cf.~\cite[Cor.\,1.2.7]{Neeman01}).  In particular, $x$ is a retract of $FG(x)$.  Hence $G$ faithful implies that $F$ is surjective up to direct summands.

Conversely, assume $F$ is surjective up to direct summands:  for every $x\in \cat T$ we can find an $x'\in \cat T$, a $y\in \cat S$, and an isomorphism $x\oplus x' \cong F(y)$.  By the other unit-counit relation, the morphism $\eps_{F(y)}: FGF(y) \to F(y)$ is a split epi.  By the naturality of $\eps$ and the additivity of the functors, the morphisms $\eps_{F(y)}$ and $\eps_{x}\oplus \eps_{x'}$ are isomorphic, hence $\eps_x$ must be an epi.
As $x\in \cat T$ was arbitrary, this proves $G$ faithful by \cite[Thm.\ IV.3.1]{MacLane98}.
\end{proof}

\begin{Rem}
\label{rem:FHM}%
The important article Fausk-Hu-May~\cite{FauskHuMay03} also deals with Gro\-then\-dieck duality and Wirthm\"uller isomorphisms, without assuming the categories to be triangulated until their final section. In some sense, we take over where they leave things and the picture becomes much simpler, as we now explain.

Fausk-Hu-May assume given two pairs of adjoints: the original $f^*\adj f_*$ and another one $f_{{!}}\adj f^{{!}}$. This is motivated by ``Verdier-Grothendieck duality" in algebraic geometry (which we do not consider here). They mainly study two special cases:
\begin{enumerate}[(1)]
\item
\label{it:FHM-G}%
The case $f_{{!}}=f_*$ or ``Grothendieck context", which reads $\qquad f^*\adj f_*\adj f^!$.
\item
\label{it:FHM-W}%
The case $f^{{!}}=f^*$ or ``Wirth\-m\"uller context", which reads $\,f_!\adj f^*\adj f_*$.
\end{enumerate}
Although they explicitly say that both cases can happen simultaneously, their paper rather stresses the separation between the two contexts (first in the notation, since $f_!\adj f^*\adj f_*\adj f^!$ would collide with $f_!\adj f^!$, but more systematically in the presentation: the two cases are deemed ``deceptively similar, but genuinely different"). However, for triangulated categories, our new condition~(GN\,\ref{it:new}) says that the mere existence of a left adjoint to~$f^*$ already forces Grothendieck duality, even before asking whether this left adjoint is a twisted form of~$f_*$. In other words, \eqref{it:FHM-W} is not genuinely different from~\eqref{it:FHM-G}, it actually \emph{implies}~\eqref{it:FHM-G}!

Also, the Wirthm\"uller context~\eqref{it:FHM-W} assumes the existence of some object $C$ with $f_*(\unit)\simeq f\dd{1}(C)$.
Then \cite[Thm.\,8.1]{FauskHuMay03} establishes a Wirthm\"uller isomorphism $f_*\simeq f\dd{1}(C\otimes-)$ comparable to our~(W\ref{it:Wirth}). However, in each example, such a ``Wirthm\"uller object" $C$ needs to be constructed by hand. For instance, in equivariant stable homotopy such a construction is done in a separate article~\cite{May03}, sequel to~\cite{FauskHuMay03}. Moreover, the relation $f_*(\unit)\simeq f\dd{1}(C)$ does not characterize~$C$ uniquely, a priori. Our approach avoids the mysterious object~$C$ altogether: The relative dualizing object~$\DO$ and the ur-Wirthm\"uller isomorphism~\eqref{eq:ur-Wirth} exist in the more general setting of Grothendieck duality, and when a Wirthm\"uller isomorphism exists, we can simply take $C$ to be the inverse of the relative dualizing object~$\DO$.

We did not find any example with non-rigid Wirthm\"uller object~$C$. Hence, the following result essentially subsumes the Wirthm\"uller context of~\cite{FauskHuMay03} into ours (again, for rigidly-compactly generated categories).
\end{Rem}

\begin{Prop}
Suppose that the basic adjunction $f^*:\cat D\adjto \cat C:f_*$ as in Hypothesis~\ref{hyp:base} fits in a ``Wirthm\"uller context" in the sense of~\cite{FauskHuMay03}, \ie suppose that $f^*$ has a left adjoint~$f\dd{1}$ (denoted~$f_!$ in~\cite{FauskHuMay03}) and that there exists an object $C\in\cat C$ such that $f\dd{1}(C)\simeq f_*(\unit)$. Then its dual $\ihomcat{C}(C,\unit)$ is isomorphic to~$\DO$.

If moreover $C$ is compact (\ie rigid), as is commonly the case in examples, then $\DO$ and $C$ are invertible and $C\cong \DO\inv$. In other words, a Wirth\"uller context with compact Wirthm\"uller object~$C$ only happens in the case of the infinite tower of adjoints (Theorem~\ref{thm:Wirthmueller}) and then $C$ must be the inverse of the canonical object~$\DO$.
\end{Prop}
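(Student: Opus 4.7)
Since $f^*$ admits the left adjoint $f\dd{1}$, Grothendieck-Neeman duality (Theorem~\ref{thm:GN}) is in force; in particular we have the ur-Wirthm\"uller formula $f\dd{1}(-) \cong f_*(\DO \otimes -)$ from~\eqref{eq:ur-Wirth1} and the canonical identity $\ihom(\DO,\DO) \cong \unit$ from~\eqref{eq:ihom(w,w)1}. Specializing ur-Wirthm\"uller at~$C$ and combining with the hypothesis yields the point-wise iso $f_*(\DO \otimes C) \cong f_*(\unit)$ in~$\cat D$.

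The plan for the first claim is to promote this point-wise relation to a natural isomorphism of functors $f_*(-) \cong f_*(\DO \otimes C \otimes -)\colon \cat C \to \cat D$, and then to extract $\dual C \cong \DO$ by uniqueness of right adjoints. For the promotion, combining the hypothesis with the two projection formulas gives, for $y \in \cat D$,
\[
f_*(f^*y) \;\underset{\eqref{eq:right_proj_formula}}{\cong}\; y \otimes f_*(\unit) \;\cong\; y \otimes f\dd{1}(C) \;\underset{\eqref{eq:left_proj_formula}}{\cong}\; f\dd{1}(f^*y \otimes C) \;\underset{\eqref{eq:ur-Wirth1}}{\cong}\; f_*(\DO \otimes C \otimes f^*y),
\]
so the desired Wirthm\"uller identification already holds on the image of~$f^*$. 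The extension to all of~$\cat C$ then proceeds as in Fausk-Hu-May~\cite[Thm.\,8.1]{FauskHuMay03}, exploiting the abundant five-adjoint structure now available.

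Once the natural iso $f_*(-) \cong f_*(\DO \otimes C \otimes -)$ is in hand, both sides are left adjoints of functors $\cat D \to \cat C$, so taking right adjoints yields $f\uu{1}(-) \cong \ihom(\DO \otimes C, f\uu{1}(-))$. Evaluating at~$\unit_{\cat D}$ and then using iterated tensor-hom adjunction together with~\eqref{eq:ihom(w,w)1}, I would compute
\[
\DO \;\cong\; \ihom(\DO \otimes C, \DO) \;\cong\; \ihom\bigl(C, \ihom(\DO, \DO)\bigr) \;\cong\; \ihom(C, \unit) \;=\; \dual C,
\]
proving the first claim.

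The second claim is then formal: if $C$ is rigid, then so is $\dual C \cong \DO$, and Proposition~\ref{Prop:rigid_implies_invertible} forces $\DO$ to be $\otimes$-invertible; from $C$ rigid we get $\dual\dual C \cong C$ and from $\DO$ invertible we get $\dual\DO \cong \DO\inv$, whence $C \cong \dual\dual C \cong \dual\DO \cong \DO\inv$. Invertibility of $\DO$ places us in the Wirthm\"uller situation~(W\ref{it:w-inv}) of Theorem~\ref{thm:Wirthmueller}, delivering the infinite tower of adjoints. The main obstacle throughout is the upgrade of the point-wise hypothesis to the natural Wirthm\"uller isomorphism; once granted, the rest is a routine adjunction and internal-hom calculation driven by~\eqref{eq:ihom(w,w)1}.
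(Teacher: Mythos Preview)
Your argument is correct, but it is more roundabout than the paper's. Both approaches invoke \cite[Thm.\,8.1]{FauskHuMay03} to upgrade the point-wise hypothesis $f\dd{1}(C)\cong f_*(\unit)$ to a natural isomorphism; the paper simply cites that theorem outright to obtain $f\dd{1}(C\otimes -)\cong f_*$, whereas you first re-establish the special case on the image of~$f^*$ by hand and only then defer to the same reference for the extension --- this preliminary work is redundant. The more substantive difference is which adjunction you pass through afterward. The paper takes right adjoints of $f\dd{1}(C\otimes -)\cong f_*$ directly via $f\dd{1}\dashv f^*$, obtaining $\ihomcat{C}(C,f^*(-))\cong f\uu{1}$ and hence $\dual C\cong \DO$ in one step by evaluating at~$\unit_{\cat D}$. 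You instead first rewrite $f\dd{1}$ as $f_*(\DO\otimes -)$ via ur-Wirthm\"uller, take right adjoints via $f_*\dashv f\uu{1}$, and are then left with $\ihom(\DO\otimes C,\DO)\cong\DO$, which requires the extra identity $\ihom(\DO,\DO)\cong\unit$ to unwind. Both reach the same conclusion, but the paper's route avoids the detour through~$\DO$ entirely. Your treatment of the second claim (rigidity of~$C$ forces invertibility of~$\DO$ via Proposition~\ref{Prop:rigid_implies_invertible}, hence $C\cong\DO\inv$) matches the paper's.
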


\begin{proof}
By~\cite[Thm.\,8.1]{FauskHuMay03}, the Wirthm\"uller context yields an isomorphism $f\dd{1}(C \otimes -)\simeq f_*$.
Taking right adjoints (which exist by Theorem~\ref{thm:base}), we get
$$
\ihomcat{C}(C,f^*(-))\simeq f\uu{1}.
$$
Evaluating at~$\unit\in\cat D$, we obtain the desired $\ihomcat{C}(C,\unit)\simeq f\uu{1}(\unit)=\DO$. If moreover $C$ is rigid, then so is its dual~$\DO$. So, by Theorem~\ref{thm:Wirthmueller}, $\DO$ must be invertible.
\end{proof}

\begin{Exa}[Equivariant homotopy theory]
\label{Exa:eqhtpy}%
Let $H$ be a closed subgroup of a compact Lie group~$G$ and let $f^*:\SH(G)\to \SH(H)$ denote the restriction functor from the equivariant stable homotopy category of (genuine) $G$-spectra to that of $H$-spectra, as in Example~\ref{exa:SH}.
Then $f^*$ provides an example of Theorem~\ref{thm:Wirthmueller}.
The relative dualizing object $\DO$ is the $H$-sphere $S^L$ where $L$ denotes the tangent $H$-representation
of the smooth $G$-manifold $G/H$ at the identity coset $eH$ (see \cite[Chapter III]{LMS86}).
The ur-Wirthm\"uller isomorphism reads $G_+ \wedge_H X \simeq F_H(G_+,X \wedge S^L)$ and provides the
well-known Wirthm\"uller isomorphism between induction and coinduction, up to a twist by $S^L$.
If $H$ has finite index in $G$ (\eg\ if $G$ is a finite group) then $L=0$ and $\DO \cong \unit$.
\end{Exa}

\begin{Exa}[Finite group schemes]
	Let $H$ be a closed subgroup of a finite group scheme $G$ and consider their stable representation categories, as in Example~\ref{exa:Stab}.
	As discussed in \cite[Chapter~8]{Jantzen87},
	the restriction functor $f^*:\Stab(\kk G) \ra \Stab(\kk H)$ provides another example of Theorem~\ref{thm:Wirthmueller}.
	If $\delta_G$ denotes the unimodular character of the finite group scheme $G$ then
	the relative dualizing object $\DO$ is $\left.\delta_G\right|_H \cdot \delta_H^{-1}$.
	A finite group scheme is said to be ``unimodular'' if its unimodular character is trivial, which is equivalent to the group algebra being a symmetric algebra. This is the case for instance for (discrete) finite groups.
\end{Exa}

\begin{Exa}[Motivic homotopy theory] \label{Exa:mothtpy}
	Let $\kk$ be a field of characteristic zero, and let $\SHA(\kk)$ denote the stable $\bbA^1$-homotopy category over $\kk$,
	as in Example~\ref{exa:SHk}.
	For any finite extension $i:\kk\hookrightarrow \kk'$, the
	base change functor $i^*:\SHA(\kk) \to \SHA(\kk')$ provides another example of
	Theorem~\ref{thm:Wirthmueller}. In this example, the relative dualizing object
	$\DO$ is the unit object $\unit$. See \cite{Hu01} for further details.
\end{Exa}
\begin{Exa}[Cohomology rings of classifying spaces] \label{exa:coho-rings}
Let $H \hook G$ be an inclusion of connected compact Lie groups, and consider the restriction map $H^*(BG;\bbQ) \to H^*(BH;\bbQ)$ on rational cohomology rings -- regarded as commutative dg-algebras over $\bbQ$ with zero differentials.  We are then in the situation of Example~\ref{Exa:Brave_New}, with $f^*:\Der(H^*(BG;\bbQ)) \to \Der(H^*(BH;\bbQ))$ the derived extension-of-scalars.  Since the cohomology rings are polynomial algebras, it follows from \cite[\S 11]{DwyerWilkerson98} and Venkov's theorem that $H^*(BH;\bbQ)$ is a finitely generated free $H^*(BG;\bbQ)$-module, and thus a compact object of the derived category $\Der(H^*(BG;\bbQ))$.  Hence we have GN-duality.  In fact, $\omega_f = \Sigma^{-d}\unit$ where $d=\dim(G/H)$.  Indeed, the Eilenberg-Moore spectral sequence associated to the fibration $G/H \to BH \to BG$ collapses and provides an isomorphism $H^*(BH;\bbQ) \simeq H^*(BG;\bbQ) \otimes_\bbQ H^*(G/H;\bbQ)$ (cf.~\cite[Thm.\,8.1]{McCleary01}).  Moreover, since our groups are connected, the manifold $G/H$ is orientable.  Hence, by Poincar\'{e} duality its Betti numbers are symmetric and one sees that the graded dual of $H^*(G/H;\bbQ)$ is $\Sigma^{-d}H^*(G/H;\bbQ)$.  Extending scalars along $\bbQ \to H^*(BG;\bbQ)$, it follows that $\Sigma^{-d}H^*(BH;\bbQ)$ is the dual of $H^*(BH;\bbQ)$ in the derived category $\Der(H^*(BG;\bbQ))$ and the identification $\DO \cong \Sigma^{-d}\unit$ follows (cf.~\eqref{eq:i-DO}).

		As a concrete example, take $G=U(n)$ and let $T\le G$ be the maximal torus consisting of the diagonal unitary matrices.  The Weyl group $W=N_G(T)/T$ is the symmetric group $S_n$ and acts on $T$ by permuting the (diagonal) entries.  The cohomology ring $H^*(BT;\bbQ)$ is the polynomial algebra $\bbQ[x_1,\ldots,x_n]$ with all generators in degree 2 and with the Weyl group acting by permuting the generators.  Similarly, $H^*(BU(n);\bbQ)$ is a polynomial algebra $\bbQ[c_1,\ldots,c_n]$ with generators the universal Chern classes $c_i$ (of degree $2i$).  The map $H^*(BU(n);\bbQ) \to H^*(BT;\bbQ)$ is a monomorphism and provides an isomorphism $\bbQ[c_1,\ldots,c_n] \xra{\sim} \bbQ[x_1,\ldots,x_n]^{W}$	onto the subalgebra of $W$-invariant polynomials (\aka symmetric polynomials), sending $c_i$ to the $i$th elementary symmetric polynomial.  Here $d=\dim(G/T)=n^2-n$.  In this example, one can see directly that $\DO = \Sigma^{-n(n-1)}\unit$ by the same method as in Example~\ref{Exa:poly-algebra} using the fact that $\bbQ[x_1,\ldots,x_n]$ is a free $\bbQ[c_1,\ldots,c_n]$-module with basis given by the collection of monomials $\{ x_1^{a_1}x_2^{a_2}\cdots x_n^{a_n} \mid 0 \le a_i \le n-i \}$.  Just note that the degrees of the monomial generators range from $0$ to $n(n-1)$ (recall that $|x_i|=2$) and that the number of monomial generators of degree $i$ equals the number of generators of degree $n(n-1)-i$.
	\end{Exa}
\begin{Exa}[Highly-structured cochains]
		Fix a field $\kk$ and let $H\kk$ denote the associated Eilenberg-MacLane spectrum.  For any space $X$, the function spectrum $C^*(X) := F(\Sigma^\infty X_+,H\kk)$ is a highly structured commutative $H\kk$-algebra and we can consider its derived category $\Der(C^*(X))$ as in Example~\ref{exa:BraveNew}.  Note that $\pi_{-n}(C^*(X)) \cong H^n(X;\kk)$ and we think of $C^*(X)$ as the spectrum of ``cochains'' on~$X$.  In particular, for a compact Lie group $G$ we can consider the derived category $\Der(C^*(BG))$.  For any closed subgroup $H \le G$, the map $BH \to BG$ induces a map of commutative $H\kk$-algebras $C^*(BG) \to C^*(BH)$ and we are in the situation of Example~\ref{Exa:Brave_New}.  It follows from the fact that $G/H$ is a finite CW-complex, that $C^*(BH)$ is compact when regarded as a $C^*(BG)$-module.  Hence we have GN-duality.  Moreover, as discussed in \cite[\S6]{BensonGreenlees14}, the relative dualizing object is invertible and we have an isomorphism $\DO \cong F(BH^{-L},H\kk)$ where $L$ denotes the tangent $H$-representation of $G/H$ at the identity coset (already seen in Example~\ref{Exa:eqhtpy}) and $BH^{-L}$ is the associated Thom spectrum.  Moreover, if our groups are connected then this simplifies to $\DO \cong \Sigma^{-d}(C^*(BH))=\Sigma^{-d}\unit$ where $d=\dim(G/H)$.  

		In fact, when $\kk=\bbQ$ this example can be united with the previous Example~\ref{exa:coho-rings}.  Indeed, for any compact connected Lie group $G$, the results of~\cite{Shipley07}, together with the intrinsic formality of the polynomial algebra $H^*(BG;\bbQ)$, provides an equivalence $\Der(C^*(BG)) \cong \Der(H^*(BG;\bbQ))$ under which  the (derived) extensions of scalars along $C^*(BG)\to C^*(BH)$ and along $H^*(BG;\bbQ)\to H^*(BH;\bbQ)$ coincide.
\end{Exa}
\begin{Rem}
		In fact, the last two examples can be connected with Example~\ref{Exa:eqhtpy} by using Greenlees and Shipley's work on algebraic models for free rational $G$-spectra. See \cite{GreenleesShipley14,Greenlees14pp} for details.
\end{Rem}

Finally, we provide an example of a functor $f^*:\cat D\to \cat C$ satisfying Grothendieck-Neeman duality for which $\DO$ is \emph{not} invertible.

\begin{Exa}
Let $\phi: B\to A$ be a morphism of commutative rings, as in Example~\ref{Exa:AG_affine}, and assume that the induced pull-back functor $f^*= \Spec(\phi)^*= A\otimes^\Lder_B-: \Der(B)\to \Der(A)$ satisfies Grothendieck-Neeman duality, \ie that $A$ is (finitely generated and) perfect over~$B$ (see Ex.\,\ref{Exa:AG_affine}).
For simplicity assume that $B=\kk$ is a field, so that~$A$ is a finite-dimensional commutative $\kk$-algebra.
In this case $\DO $ is the \mbox{$A$-module} $\Rder \Hom_B(A,B)=\Hom_\kk(A,\kk)=:A^\star$, the $\kk$-linear dual of~$A$, seen as an object of~$\Der(A)$. As $A$ has Krull dimension zero, the Picard group of $A$ is trivial, so $\DO$ is invertible (\ie perfect!) only if $A^\star \cong A$ as $A$-modules.
For an explicit example where this is \emph{not} the case, we can take the three-dimensional $\kk$-algebra $A:= \kk[t,s]/(s^2,t^2,st)$, which has the basis $\{1,s,t\}$;
then $A^\star$ has the dual basis $\{1^\star,s^\star,t^\star\}$ and the $A$-action determined by
$s\cdot s^\star = 1^\star,\
t\cdot t^\star = 1^\star,\
s\cdot t^\star = s\cdot 1^\star = t\cdot s^\star = t\cdot 1^\star = 0$.
Since $(s\cdot A^\star)\cap (t\cdot A^\star)\ni 1^\star\neq 0$, this intersection is non-zero. On the other hand, $(s\cdot A)\cap (t\cdot A)=0$, hence $A^\star\not\simeq A$ as an $A$-module.
(From a traditional point of view: if $A$ is a finite-dimensional algebra, $A^\star$ has finite $A$-injective dimension and in fact any of its finite injective resolutions is a dualizing complex for~$A$, in the classical sense; see~\cite[Prop.\,X.9.1(b) and Ex.\,X.9.10(b)]{BourbakiAlgebre07}.)
\end{Exa}

\bigbreak
\section{Grothendieck duality on subcategories}
\label{se:Groth}%

In this section we consider subcategories $\cat C_0\subset \cat C$ admitting a dualizing object~$\kappa$ and study the behavior of such structures under our functors~$f^*$ and~$f_*$.

\begin{Def}
\label{def:dualizing_object}%
Let $\cat C_0\subset \cat C$ be a \emph{$\cat C^c$-submodule}, \ie a thick triangulated subcategory of our big category~$\cat C$ such that $c\otimes x\in \cat C_0$ for all $x\in \cat C_0$ and all compact~$c\in\cat C^c$. An object $\kappa\in \cat C_0$ is called a \emph{dualizing object for~$\cat C_0$} if the \emph{$\kappa$-twisted duality} $\Dk:=\ihomcat{C}(-,\kappa)$ defines an anti-equivalence on~$\cat C_0$:
\begin{equation}
\label{eq:hom(-,kappa)}%
\Dk=\ihomcat{C}(-,\kappa):\cat C_0\op\isotoo \cat C_0.
\end{equation}
In Section~\ref{se:Beyond}, we will consider the more general situation of an ``external" dualizing object $\kappa\in\cat C$ by dropping the assumption that $\kappa$ belongs to the subcategory $\cat C_0$ itself.
(Note that if $\unit $ belongs to $ \cat C_0$ then necessarily $\kappa \cong \Dk(\unit)\in \cat C_0$.)
\end{Def}

\begin{Rem}
\label{rem:dualizing}%
Because $\cat C(x, \Dk(y)) \cong \cat C(x \otimes y, \kappa) \cong \cat C (y \otimes x, \kappa) \cong \cat C(y, \Dk(x))\cong \cat C\op(\Dk(x),y)$, we see that $\Dk$ is adjoint to itself and we have a canonical natural morphism
\begin{equation}
\label{eq:can_kappa}
\varpi_{\kappa}: x \longrightarrow \Dk \Dk (x)
\end{equation}
for all $x\in \cat C$, which is both the unit and the counit of this self-adjunction. It satisfies $\Dk(\varpi)\circ\varpi_{\Dk}=\id_{\Dk}:\Dk\to \Dk\Dk\Dk\to \Dk$ by the unit-counit relation.
We say that $x\in\cat C$ is \emph{$\kappa$-reflexive} if this morphism $\varpi_\kappa$ is an isomorphism (at~$x$).
\end{Rem}

\begin{Lem} \label{Lem:duality_criterion}
For a $\cat C^c$-submodule $\cat C_0\subset \cat C$, an object $\kappa\in\cat C_0$ is a dualizing object if and only if the following two conditions are satisfied:
\begin{enumerate}[\rm(i)]
\item For every $x\in\cat C_0$, the $\kappa$-twisted dual $\dual_\kappa(x)=\ihomcat{C}(x,\kappa)$ belongs to~$\cat C_0$.
\item Every $x\in \cat C_0$ is $\kappa$-reflexive, \ie $\varpi_\kappa:x\isoto \Dk\Dk(x)$.
\end{enumerate}
\end{Lem}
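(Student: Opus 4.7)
The plan is to reduce the statement to the general principle that an adjunction is an equivalence precisely when its unit and counit are isomorphisms, applied to the self-adjunction $\dual_\kappa \dashv \dual_\kappa$ noted in Remark~\ref{rem:dualizing}.

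First I would make explicit that the formula $\cat C(x, \Dk y) \cong \cat C(y, \Dk x)$ upgrades $\Dk$ to an adjunction $\Dk : \cat C \rightleftarrows \cat C\op : \Dk$, with unit and counit both given by the natural transformation $\varpi_\kappa : \Id \to \Dk\Dk$ of~\eqref{eq:can_kappa}. This is already recorded in Remark~\ref{rem:dualizing}, so no real work is needed here.

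For the ``only if'' direction, assume $\Dk$ restricts to an anti-equivalence $\cat C_0\op \isoto \cat C_0$. Then~(i) is automatic from the fact that the equivalence has codomain $\cat C_0$. For~(ii), any equivalence of categories which is part of an adjunction is automatically an adjoint equivalence, so its unit is an isomorphism; applied to the self-adjunction above (restricted to $\cat C_0$) this says $\varpi_\kappa$ is invertible on $\cat C_0$, i.e.\ every $x\in\cat C_0$ is $\kappa$-reflexive.

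For the ``if'' direction, assume (i) and (ii). By~(i), the functor $\Dk$ restricts to $\cat C_0\op \to \cat C_0$ and to $\cat C_0 \to \cat C_0\op$, and these restricted functors inherit the self-adjunction from the ambient one. By~(ii), the unit $\varpi_\kappa : \Id_{\cat C_0} \to \Dk\Dk$ of the restricted adjunction is an isomorphism; the counit, being again $\varpi_\kappa$ (evaluated on $\cat C_0\op$), is also an isomorphism by~(ii). Hence the restricted adjunction is an adjoint equivalence, which is the definition of $\kappa$ being dualizing for $\cat C_0$.

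There is no genuine obstacle in this argument; the only point requiring a line of care is verifying that the restriction of the self-adjunction to $\cat C_0$ is well-defined, which is exactly what condition~(i) ensures, and that both the unit and counit of this self-adjunction coincide with $\varpi_\kappa$, which is exactly the content of Remark~\ref{rem:dualizing}.
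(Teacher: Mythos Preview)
Your proposal is correct and follows exactly the same approach as the paper: both reduce the claim to the general fact that an adjunction is an equivalence if and only if its unit and counit are isomorphisms, applied to the self-adjunction $\Dk \dashv \Dk$ on $\cat C_0$ whose unit and counit are both~$\varpi_\kappa$. The paper compresses this into a single sentence, while you spell out the two directions, but the content is identical.
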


\begin{proof}
In the adjunction $\Dk:\cat C_0\op \adjto \cat C_0:\Dk$ the unit and counit are isomorphisms if and only if $\Dk$ is an equivalence.
\end{proof}

\begin{Exa} \label{Exa:trivial_duality}
For the subcategory $\cat C_0=\cat C^c$ of all rigids, an object $\kappa\in \cat C_0$ is dualizing if and only if it is invertible (cf.\ \cite[Cor.\,5.9]{FauskHuMay03}). In particular $\cat C_0=\cat C^c$ always admits $\kappa=\unit$ as dualizing object.
\end{Exa}

\begin{Lem} \label{Lem:extraction_of_rigids}
There is a canonical natural isomorphism
$\dual_{\kappa}(x) \otimes \dual(c) \cong \dual_\kappa (x \otimes c)$
for all $x,\kappa\in \cat C$ and all rigid $c\in \cat C^c$, where $\dual=\dual_{\unit}$ is the usual dual.
\end{Lem}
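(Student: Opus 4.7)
The plan is to chain together two standard closed-symmetric-monoidal identities, exploiting that $c$ is rigid.

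First, I would use the definition of rigidity (Def.\,\ref{Def:rigidly}): since $c \in \cat C^c$ is rigid, the canonical map
\[
\ihomcat{C}(c,\unit) \otimes y \longrightarrow \ihomcat{C}(c,y)
\]
is an isomorphism for every $y \in \cat C$. Applying this with $y := \ihomcat{C}(x,\kappa) = \Dk(x)$ yields a natural isomorphism
\[
\dual(c) \otimes \Dk(x) \;\cong\; \ihomcat{C}\bigl(c,\,\ihomcat{C}(x,\kappa)\bigr).
\]

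Next, I would apply the standard hom-tensor adjunction (i.e.\ the two-variable adjunction between $\otimes$ and $\ihom$ on~$\cat C$) to rewrite the right-hand side:
\[
\ihomcat{C}\bigl(c,\,\ihomcat{C}(x,\kappa)\bigr) \;\cong\; \ihomcat{C}(c \otimes x,\,\kappa) \;\cong\; \ihomcat{C}(x \otimes c,\,\kappa) \;=\; \Dk(x \otimes c),
\]
where the middle step uses the symmetry of~$\otimes$. Combining this with the previous display, and using the symmetry of $\otimes$ on the left as well to rearrange $\dual(c) \otimes \Dk(x) \cong \Dk(x) \otimes \dual(c)$, produces the claimed natural isomorphism
\[
\Dk(x) \otimes \dual(c) \;\cong\; \Dk(x \otimes c).
\]

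I do not foresee any real obstacle here, since rigidity of~$c$ gives us precisely the freedom to move $c$ in and out of $\ihom$, and everything else is formal for a closed symmetric monoidal category. The only small thing worth checking (which I would leave to the reader, as is done throughout this section) is that the composite isomorphism agrees with a natural candidate comparison map built from the coevaluation $\unit \to \dual(c) \otimes c$ and the counit of $\otimes \dashv \ihom$; this ensures naturality in both $x$ and~$\kappa$.
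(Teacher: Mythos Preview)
Your proposal is correct and essentially the same as the paper's argument: both use the rigidity of~$c$ together with the internal tensor--hom adjunction. The only cosmetic difference is that the paper describes the canonical comparison map in one step as the adjoint of the double evaluation $\ihom(x,\kappa)\otimes \ihom(c,\unit)\otimes x\otimes c \xrightarrow{\ev\otimes\ev} \kappa$ (and cites \cite[Thm.~A.2.5(d)]{HoveyPalmieriStrickland97}), whereas you factor the isomorphism through the intermediate object $\ihomcat{C}(c,\ihomcat{C}(x,\kappa))$.
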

\begin{proof}
This is standard; see~\cite[Thm.\ A.2.5.(d)]{HoveyPalmieriStrickland97}.
The map $\ihom(x,\kappa)\otimes \ihom(c,\unit) \ra \ihom(x\otimes c,\kappa)$
is adjoint to the map
$\ihom(x,\kappa)\otimes \ihom(c,\unit) \otimes x \otimes c
\cong \ihom(x,\kappa) \otimes x \otimes \ihom(c,\unit) \otimes c
\xra{\ev\otimes \ev} \kappa \otimes \unit \cong \kappa$.
\end{proof}

\begin{Rem}
It follows that if $\kappa$ is a dualizing object for~$\cat C_0$ then so is $\kappa\otimes u$ for every $\otimes$-invertible~$u$. In algebraic geometry, dualizing complexes are unique up to tensoring by an invertible object; see~\cite[Lem.\,3.9]{Neeman10}. For a general~$\cat C_0$ this seems to be over-optimistic, although we can prove the following variant, replacing an equivalence of the form $u\otimes-$ by one of the form $\ihomcat{C}(u,-)$.
\end{Rem}

\begin{Prop}
\label{prop:unique}%
Let $\cat C_0\subseteq \cat C$ be a $\cat C^c$-submodule containing $\cat C^c$ (that is, $\unit\in\cat C_0$). Let $\kappa$ and $\kappa'$ be two dualizing objects for~$\cat C_0$. Let $u:=\Dkk(\kappa)=\ihomcat{C}(\kappa,\kappa')$ and $v:=\Dk(\kappa')=\ihomcat{C}(\kappa',\kappa)$, both in~$\cat C_0$. Then $v \cong \dual u$ and $u\cong \dual v$ and the restrictions to~$\cat C_0$ of the functors $F_u:=\ihomcat{C}(u,-)$ and $F_v:=\ihomcat{C}(v,-)$ yield mutually inverse equivalences $F_u:\cat C_0\overset{\sim}\longleftrightarrow\cat C_0:F_v$ such that $\kappa\cong F_u(\kappa')$.
Moreover, we have $\dual_{\kappa'}\cong F_v\circ \dual_{\kappa}\cong \dual_{\kappa}\circ F_u$.
\end{Prop}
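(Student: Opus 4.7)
The plan is to derive every claim from two global natural isomorphisms
\[
\dual_{\kappa'} \cong F_v \circ \dual_\kappa \qquad\text{and}\qquad \dual_\kappa \cong F_u \circ \dual_{\kappa'}
\]
of endofunctors of \emph{all} of $\cat C$, not merely of $\cat C_0$. The first follows at once from the general symmetry $\ihomcat{C}(v,\ihomcat{C}(x,\kappa))\cong\ihomcat{C}(x,\ihomcat{C}(v,\kappa))$ provided by the tensor--hom adjunction, together with the reflexivity $\ihomcat{C}(v,\kappa) = \Dk\Dk(\kappa')\cong\kappa'$, which holds because $\kappa'\in\cat C_0$ and $\kappa$ is dualizing on $\cat C_0$. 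The second is obtained by interchanging the symbols $\kappa\leftrightarrow\kappa'$ (which swaps $u\leftrightarrow v$ and $\Dk\leftrightarrow\Dkk$).

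From these two identities the rest follows quickly. Recall that $\unit\in\cat C_0$ with $\dual_\kappa(\unit)=\kappa$ and $\dual_{\kappa'}(\unit)=\kappa'$, so that by reflexivity $\dual_{\kappa'}(\kappa')\cong\unit\cong\dual_\kappa(\kappa)$. Evaluating the second identity at $\kappa'$ then yields $v=\dual_\kappa(\kappa')\cong F_u(\unit)=\dual u$, and evaluating the first at $\kappa$ yields $u\cong\dual v$. Next, for any $y\in\cat C_0$ we have $\dual_{\kappa'}(y)\in\cat C_0$ and $\dual_{\kappa'}(\dual_{\kappa'}(y))\cong y$; the second identity applied to $\dual_{\kappa'}(y)$ then gives $F_u(y)\cong\dual_\kappa\dual_{\kappa'}(y)\in\cat C_0$, showing that $F_u$ restricts to an endofunctor of $\cat C_0$ that is isomorphic there to $\dual_\kappa\circ\dual_{\kappa'}$. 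By symmetry, $F_v|_{\cat C_0}\cong\dual_{\kappa'}\circ\dual_\kappa$, and the involutivity of $\dual_\kappa$ and $\dual_{\kappa'}$ on $\cat C_0$ immediately gives $F_u F_v\cong\Id_{\cat C_0}\cong F_v F_u$. Finally $F_u(\kappa')\cong\dual_\kappa\dual_{\kappa'}(\kappa')\cong\dual_\kappa(\unit)=\kappa$; and the ``moreover'' clause is just one of the starting identities combined with the computation $\dual_\kappa\circ F_u\cong\dual_\kappa\circ\dual_\kappa\circ\dual_{\kappa'}\cong\dual_{\kappa'}$ on $\cat C_0$.

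I do not foresee a serious obstacle here: the proof is mostly bookkeeping around the tensor--hom adjunction. The main subtlety is to distinguish where reflexivity is used: the two starting identities are global on $\cat C$ because they only invoke reflexivity for the single objects $\kappa$ and $\kappa'$ themselves, whereas the mutual quasi-inverseness of $F_u$ and $F_v$ and the descriptions $F_u\cong\dual_\kappa\dual_{\kappa'}$ and $F_v\cong\dual_{\kappa'}\dual_\kappa$ are genuinely statements on $\cat C_0$, requiring reflexivity for arbitrary $y\in\cat C_0$.
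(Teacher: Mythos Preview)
Your proof is correct, and it is more direct than the paper's. The paper first proves an internal ``fully-faithfulness'' statement: for $x,y\in\cat C_0$ there is a canonical isomorphism $\beta:\ihomcat{C}(x,y)\isoto\ihomcat{C}\big(\ihomcat{C}(y,\kappa),\ihomcat{C}(x,\kappa)\big)$, established by testing against $\Homcat{C}(c,-)$ for $c\in\cat C^c$ and invoking compact generation of~$\cat C$. From the resulting two-variable identity $\big[[y,\kappa],[x,\kappa]\big]\cong[x,y]\cong\big[[y,\kappa'],[x,\kappa']\big]$ they then read off $u\cong\dual v$, $v\cong\dual u$, $\kappa\cong[u,\kappa']$, $\kappa'\cong[v,\kappa]$ by specialising $x,y$, and finally compute $\big[[x,\kappa],\kappa'\big]\cong[v,x]$ to identify $F_v$ with $\dual_{\kappa'}\dual_\kappa$. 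Your route bypasses the $\beta$-isomorphism and the compact-generation argument entirely: you observe that the tensor--hom symmetry $\ihomcat{C}(v,\ihomcat{C}(x,\kappa))\cong\ihomcat{C}(x,\ihomcat{C}(v,\kappa))$ together with the single reflexivity $\ihomcat{C}(v,\kappa)=\Dk^2(\kappa')\cong\kappa'$ already gives $F_v\circ\dual_\kappa\cong\dual_{\kappa'}$ on all of~$\cat C$, and then everything else follows by evaluation and symmetry. What the paper's longer argument buys is the auxiliary isomorphism~$\beta$, which is of some independent interest but is not needed for the proposition itself; what your argument buys is simplicity and the observation that the identity $\dual_{\kappa'}\cong F_v\circ\dual_\kappa$ holds globally, not just on~$\cat C_0$.
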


\begin{proof}
Let us write $[-,-]$ for $\ihomcat{C}(-,-)$ to save space. From the equivalence $[-,\kappa]:\cat C_0\op\isoto \cat C_0$, one deduces a natural isomorphism
\begin{equation}
\label{eq:[,kappa]}%
\beta:[x,y]\isoto \big[[y,\kappa],[x,\kappa]\big]
\end{equation}
for $x,y\in\cat C_0$. Indeed, the morphism $\beta$ is (double) adjoint to the (double) evaluation $[x,y]\otimes[y,\kappa]\otimes x\too \kappa$. When tested on $\Homcat{C}(c,-)$ for $c\in\cat C^c$, we obtain
$$
\xymatrix@C=1em{
\Homcat{C}(c,[x,y]) \ar@{}[r]|-{\cong} \ar[d]^-{\Homcat{C}(c,\beta)}
& \Homcat{C}(c\otimes x,y) \ar@{..>}[r]
& \Homcat{C}([y,\kappa],[c\otimes x,\kappa])
\\
\Homcat{C}(c,\big[[y,\kappa],[x,\kappa]\big]) \ar@{}[r]|-{\cong}
& \Homcat{C}(c\otimes [y,\kappa],[x,\kappa]) \ar@{}[r]|-{\cong}
& \Homcat{C}([y,\kappa],\dual c\otimes [x,\kappa]) \ar[u]^-{\cong}.
}
$$
The dotted arrow thus obtained is nothing but the map on morphisms sets induced by the contravariant functor~$[-,\kappa]:\cat C_0\op\to \cat C_0$. Since the latter is an equivalence, this dotted map is a bijection hence so is the left vertical map. Since this holds for every $c\in\cat C^c$ and since $\cat C$ is compactly generated, the morphism~$\beta$ is an isomorphism. As the same is true for $[-,\kappa']$ we have a natural isomorphism
\begin{equation}
\label{eq:double-kappa}%
\big[[y,\kappa],[x,\kappa]\big] \cong [x,y] \cong \big[[y,\kappa'],[x,\kappa']\big]
\end{equation}
for all $x,y\in \cat C_0$. Since $\unit\in\cat C_0$, we have $[\kappa,\kappa]\cong \dual_\kappa^2(\unit)\cong\unit$ and similarly $[\kappa',\kappa']\cong\unit$.
Thus, plugging $x=\kappa$ and $y=\kappa'$ (resp.\ $x=\kappa'$ and $y=\kappa$) in~\eqref{eq:double-kappa} we get the announced isomorphisms $u\cong \dual v$ and $v\cong \dual u$.
Plugging instead $x=\unit$ and $y=\kappa$ (resp.\ $y=\kappa'$) in~\eqref{eq:double-kappa}, we obtain
\begin{equation}
\label{eq:uv}%
\kappa\cong[u,\kappa'] \qquadtext{and}\kappa'\cong [v,\kappa].
\end{equation}
Now compute the composite equivalence $\xymatrix@C=3em{\cat C_0 \ar[r]_-{\simeq}^-{[-,\kappa]} & \cat C_0\op \ar[r]_-{\simeq}^-{[-,\kappa']} & \cat C_0.}$ For all $x\in\cat C_0$,
$$
\big[[x,\kappa],\kappa'\big]\overset{\eqref{eq:uv}}\cong\big[[x,\kappa],[v,\kappa]\big]\overset{\eqref{eq:[,kappa]}}\cong [v,x].
$$
This proves that $F_v:=[v,-]\cong \dual_{\kappa'}\dual_{\kappa}$ and $F_u:=[u,-]\cong \dual_{\kappa}\dual_{\kappa'}$ define equivalences on~$\cat C_0$, which satisfy the desired relations since $\Dk^2\cong\Id\cong \Dkk^2$.
\end{proof}

\begin{Rem}
One can deduce from Proposition~\ref{prop:unique} that $u=\ihomcat{C}(\kappa,\kappa')$ is $\otimes$-invertible, and that $\kappa\otimes u\cong \kappa'$, if $\cat C_0$ satisfies any of the following properties:
\begin{enumerate}[(i)]
\item
\label{it:uv1}%
$u\otimes \cat C_0\subset \cat C_0$ (for instance if $u$ belongs to $\cat C^c$);
\item
\label{it:uv2}%
$\cat C_0$ cogenerates~$\cat C$, \ie for $t\in \cat C$ if $\Homcat{C}(t,x)=0$ for all $x\in \cat C_0$ then $t=0$;
\item
\label{it:uv3}%
if an object $a\in \cat C$, not necessarily in~$\cat C_0$, admits a natural isomorphism $\ihomcat{C}(a,x)\cong x$ for all $x\in \cat C_0$, then $a\cong \unit$.
\end{enumerate}
This is left as an easy exercise for the interested reader.
Note in particular that condition~\eqref{it:uv1} holds if $\cat C_0$ satisfies $\cat C_0\otimes \cat C_0\subseteq \cat C_0$.
This assumption appears, for example, in~\cite{BoyarchenkoDrinfeld13}; however,
$\cat C_0\otimes \cat C_0\subseteq \cat C_0$ is not true
in algebraic geometry for $\cat C_0= \Db(\coh X)$.
Nevertheless, this important example can also be derived from Proposition~\ref{prop:unique}:
\end{Rem}

\begin{Cor}[{\cite[Lem.\,3.9]{Neeman10}}]
\label{cor:unique-AG}%
If $X$ is a noetherian scheme admitting two dualizing complexes $\kappa$ and~$\kappa'$, then there exists a $\otimes$-invertible $\ell \in\Dperf(X)$ (a shift of a line bundle on each connected component of~$X$) such that $\kappa'\cong \kappa\otimes \ell$.
\end{Cor}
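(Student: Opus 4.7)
The plan is to apply Proposition~\ref{prop:unique} to the pair $(\cat C,\cat C_0)=(\Der_{\Qcoh}(X),\Db(\coh X))$ and then use local analysis on $X$ to identify the abstract object $u=\ihom(\kappa,\kappa')$ produced by the proposition as a shift of a line bundle on each connected component of~$X$.

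First I would verify the hypotheses of Proposition~\ref{prop:unique}. Since $X$ is noetherian, $\Db(\coh X)$ is a thick triangulated subcategory of $\Der_{\Qcoh}(X)$ containing $\mathcal O_X$, and it is stable under tensoring with perfect complexes (the compact objects of $\Der_{\Qcoh}(X)$), as these are locally bounded complexes of finitely generated projectives. Hence $\cat C_0$ is a $\cat C^c$-submodule containing $\unit$. By definition, dualizing complexes $\kappa$ and $\kappa'$ belong to $\cat C_0$ and induce anti-equivalences of $\cat C_0$ in the sense of Definition~\ref{def:dualizing_object}. Applying Proposition~\ref{prop:unique} then yields $u=\ihom(\kappa,\kappa')\in\cat C_0$ and an auto-equivalence $F_u=\ihom(u,-)$ of $\cat C_0$ satisfying $F_u(\kappa')\cong \kappa$.

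Granting for a moment that $u$ is $\otimes$-invertible in $\Dperf(X)$ (hence rigid), Lemma~\ref{Lem:extraction_of_rigids} specializes to $F_u(\kappa')=\ihom(u,\kappa')\cong \kappa'\otimes \dual(u)$, and combining with $F_u(\kappa')\cong\kappa$ gives $\kappa'\cong\kappa\otimes u$. Setting $\ell:=u$ then finishes the corollary.

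The main obstacle, and the genuine geometric content of the statement, is thus the $\otimes$-invertibility of $u$ in $\Dperf(X)$. This is a local question on $X$, so one reduces to the affine case $X=\Spec A$ with $A$ noetherian, where $\kappa$ and $\kappa'$ are classical dualizing complexes over~$A$. The key computation is that for each prime $\gm\subset A$ with residue field $k(\gm)$, the standard property of a dualizing complex together with the defining adjunction for $\ihom(\kappa,\kappa')$ gives a natural isomorphism
\[
u\otimes^{\Lder}_A k(\gm)\;\cong\;\Sigma^{n(\gm)}\,k(\gm)
\]
for a single integer $n(\gm)$. Fibrewise having one-dimensional cohomology concentrated in a single degree forces $u$ to be perfect of rank one at every point, hence $\otimes$-invertible. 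Finally, the shift function $\gm\mapsto n(\gm)$ is locally constant on $\Spec A$ (by the well-known local constancy of the codimension function attached to a dualizing complex), so on each connected component of $X$ the object $\ell=u$ is the shift of a genuine line bundle, as claimed.
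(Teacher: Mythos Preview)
Your overall strategy is sound and is the classical route (essentially Hartshorne's argument): use Proposition~\ref{prop:unique} to produce $u=\ihom(\kappa,\kappa')$, establish that $u$ is $\otimes$-invertible by a local fibrewise computation, and then deduce $\kappa'\cong\kappa\otimes u$. The final deduction is fine. However, the paper proceeds differently after invoking Proposition~\ref{prop:unique}, and the difference is instructive.

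The paper does not attempt the residue-field computation. Instead it exploits \emph{all} the output of Proposition~\ref{prop:unique}: not only $F_u(\kappa')\cong\kappa$, but also the isomorphisms $u\cong\dual v\cong\dual^2(u)$ and $\unit\cong\ihom(u,u)$. It then appeals to a criterion of Avramov--Iyengar--Lipman: an object $x\in\Db(\coh X)$ is $\otimes$-invertible iff $x$ is $\unit$-reflexive and $\unit$ is $x$-reflexive. The two isomorphisms just mentioned are precisely of this shape, and the only remaining issue---whether they coincide with the canonical maps~$\varpi$---is again dispatched by an AIL result (locally, the existence of \emph{any} isomorphism $x\cong\dual_y^2(x)$ forces $y$-reflexivity). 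Thus the paper stays entirely within the abstract machinery already developed and quotes a clean black-box criterion.

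Your route is legitimate, but the sentence ``the standard property of a dualizing complex together with the defining adjunction for $\ihom(\kappa,\kappa')$ gives $u\otimes^{\Lder}_A k(\gm)\cong\Sigma^{n(\gm)} k(\gm)$'' hides real work. The adjunction yields $\dual_{\kappa'}(u\otimes^{\Lder} k)\cong\mathrm{RHom}(k,\kappa)\cong\Sigma^{d}k$, but to invert $\dual_{\kappa'}$ you need $u\otimes^{\Lder} k\in\Db(\coh A)$, i.e.\ that $u$ already has finite Tor-dimension---which is close to what you are trying to prove. The classical argument does close this loop (using finite injective dimension of dualizing complexes and a careful induction), but it is not a one-liner. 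By contrast, the AIL criterion consumes exactly the isomorphisms Proposition~\ref{prop:unique} hands you, with no further input about dualizing complexes.
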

\begin{proof}
According to the definition used in \cite{Neeman10}, which is slightly more general than the classical one, a dualizing complex for $X$ is an (internal) dualizing object for $\cat C_0=\Db(\coh X)$.  It suffices to prove that $\ell := u = \ihom(\kappa,\kappa')$ is $\otimes$-invertible, since Proposition~\ref{prop:unique} then implies that $\kappa\cong \ihom(u,\kappa') \cong \Delta u\otimes \kappa'= u^{-1}\otimes \kappa'$.  To this end, we appeal to the following criterion: Over a noetherian scheme~$X$, a complex $x\in \Db(\coh X)$ is $\otimes$-invertible iff both
\begin{enumerate}
\item  $x$ is ($\unit$-)\-reflexive,  $x\stackrel{\sim}{\to}\dual^2(x)$, and
\item $\unit$ is $x$-reflexive, $\unit \stackrel{\sim}{\to} \dual^2_x(\unit)= \ihom(x,x)$.
\end{enumerate}
\noindent
For the affine case, see \cite[Cor.\,5.7]{AvramovIyengarLipman10}.  The criterion globalizes because the canonical morphism $\varpi$ commutes with localization to an open subscheme \cite[$\S$1.3 and Rem.\,1.5.5]{AvramovIyengarLipman11}; cf.~\cite[Thm.\,4.1.2]{CalmesHornbostel09}.

By Proposition~\ref{prop:unique}, we have isomorphisms $u\cong \dual(v) \cong \dual^2(u)$ and $\unit \cong \ihom (u,u)$.  In order to conclude that $u$ is $\otimes$-invertible by the above criterion, we must prove that the latter isomorphisms are instances of the canonical maps~$\varpi$.  This verification is easy for the second map, but for the first one  it appears to be rather involved.  Fortunately, we can avoid it altogether: By the local nature of~$\varpi$, we may reduce to the affine case where, by \cite[Prop.\,2.3]{AvramovIyengarLipman10}, the existence of \emph{any} isomorphism $x\cong \dual^2_y(x)$ implies that $x$ is $y$-reflexive (for $x,y\in \Db(\coh X)$).
\end{proof}

\begin{center}
*\ *\ *
\end{center}

Let us now ``move" the above subcategories with duality under~$f^*$ and~$f_*$. In order to do this, and for later use in Theorem~\ref{Thm:abstract_DGI_duality}, we need to clarify the following:
\begin{Thm}
\label{thm:duality_preserving}%
Let $f^*: \cat D\to \cat C$ be as in our basic Hypothesis~\ref{hyp:base} and $\kappa\in \cat D$. Recall the two adjunctions $f^* \dashv f_* \dashv f\uu{1} $ of Corollary~\ref{cor:base}, as well as their internal realizations~\eqref{eq:inthom-f^*-f_*} and~\eqref{eq:inthom-f_*-f^1}. The latter yields a canonical natural isomorphism
\begin{align} \label{eq:f_*-intertwines-dualities}
\zeta:\dual_\kappa\circ f_*  \isoto f_* \circ \dual_{f\uu{1}(\kappa)}.
\end{align}
This isomorphism is compatible with the canonical maps $\varpi$ of $\dual_{\kappa}$ and $\dual_{\kappa'}$ for $\kappa'=f\uu{1}(\kappa)$. This means that the following diagram commutes, for all $x\in\cat C$\,:
\begin{align} \label{eq:duality_preserving}
\begin{gathered}
\xymatrix{
f_*(x) \ar[d]_{\varpi_{f_*(x)}} \ar[r]^-{f_*(\varpi_x)} &
 f_* \dual_{\kappa'} \dual_{\kappa'} (x) \ar[d]_{\cong}^{\zeta_{\dual_{\kappa'}(x)}} \\
\dual_{\kappa} \dual_{\kappa} f_* (x) \ar[r]^\cong_-{\dual_{\kappa} (\zeta)} &
  \dual_{\kappa} f_* \dual_{\kappa'} (x).
}
\end{gathered}
\end{align}
In other words, $f_*:\cat C\to \cat D$ is a duality-preserving functor in the sense of~\cite{CalmesHornbostel09}.
\end{Thm}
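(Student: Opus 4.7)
The plan is to reduce the commutativity of~\eqref{eq:duality_preserving}, via the self-adjunction $\dual_\kappa \dashv \dual_\kappa$, to an equality between two morphisms $f_*(x)\otimes f_*\dual_{\kappa'}(x)\to \kappa$, and then to identify both with a single canonical evaluation composite. First, we construct $\zeta$ by specializing~\eqref{eq:inthom-f_*-f^1} at $y=\kappa$, obtaining a natural isomorphism $\zeta_x\colon \dual_\kappa f_*(x) \isoto f_*\dual_{\kappa'}(x)$. Unfolding the derivation of~\eqref{eq:inthom-f_*-f^1} from the projection formula (as in the proof of Proposition~\ref{Prop:right_proj_formula}), one checks that $\zeta_x$ is the adjunct, under the tensor-hom adjunction $(-\otimes-)\dashv \ihomcat{D}(-,\kappa)$ in~$\cat D$, of the ``total evaluation''
\[
e_x\colon f_*\dual_{\kappa'}(x)\otimes f_*(x) \xrightarrow{\lax} f_*\bigl(\dual_{\kappa'}(x)\otimes x\bigr) \xrightarrow{f_*(\ev)} f_*(\kappa') = f_*f\uu{1}(\kappa) \xrightarrow{\eps_\kappa} \kappa \, .
\]

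Both composites in~\eqref{eq:duality_preserving} are morphisms $f_*(x)\to \dual_\kappa f_*\dual_{\kappa'}(x)$, the right-hand vertical $\zeta_{\dual_{\kappa'}(x)}$ being an isomorphism. The $\dual_\kappa \dashv \dual_\kappa$ adjunction transports such a morphism to a map $f_*(x)\otimes f_*\dual_{\kappa'}(x)\to \kappa$, so it suffices to verify that the two adjuncts coincide. For the left-bottom path, the definition of $\varpi_{f_*(x)}$ as the adjunct of the standard evaluation $\dual_\kappa f_*(x)\otimes f_*(x)\to \kappa$ reduces the adjunct of $\dual_\kappa(\zeta_x)\circ \varpi_{f_*(x)}$ to ``pair $\zeta_x$ against $\id_{f_*(x)}$ via evaluation''; substituting the description of $\zeta_x$ above yields precisely $e_x$ (after inserting the symmetry of $\otimes$). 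For the top-right path, naturality of $\varpi$ under $f_*$ together with the defining property of $\zeta$ (applied at $\dual_{\kappa'}(x)$) unfolds the adjunct of $\zeta_{\dual_{\kappa'}(x)}^{-1}\circ f_*(\varpi_x)$ into $f_*$ of the internal evaluation $\dual_{\kappa'}(x)\otimes \dual_{\kappa'}\dual_{\kappa'}(x)\to \kappa'$, transported through the counit $\eps_\kappa$, and then, using~\eqref{eq:lax-pi-auxiliary} to commute lax monoidality with the projection formula, collapses again to $e_x$.

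The principal obstacle is this last diagram chase: keeping careful track of how each adjunct is unfolded, of the lax monoidal structure on $f_*$, and of the successive uses of the adjunctions $f^*\dashv f_*\dashv f\uu{1}$. The chase is of exactly the same flavor as, but is appreciably simpler than, the one carried out in the proof of Theorem~\ref{thm:GN} around diagram~\eqref{eq:dia_urWirth}, and it relies on the same auxiliary compatibilities~\eqref{eq:lax-pi-auxiliary} between lax monoidality and the projection formula, together with the standard unit-counit relations. Alternatively, one can avoid the explicit chase by invoking the calculus of mates in the 2-category of categories and adjunctions: $\zeta$ is the mate of the identity at $f_*$ for the tensor-hom pairings on~$\cat C$ and~$\cat D$, and the compatibility of $\zeta$ with the canonical maps $\varpi$ is then a formal consequence of the naturality of mates under composition of adjunctions.
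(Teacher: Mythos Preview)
Your approach is genuinely different from the paper's. The paper does \emph{not} carry out the diagram chase: it explicitly says ``It is far from obvious to verify that~\eqref{eq:duality_preserving} commutes, but fortunately this has already been proved, in even greater generality, in \cite[Thm.~4.2.9]{CalmesHornbostel09}.'' The paper's work then consists solely in checking that its hypotheses match those of Calm\`es--Hornbostel and that its maps $\pi$ and $\zeta$ agree with the homonymous maps there (in particular, it verifies that $\zeta$ is given by $f_*\ihom(x,f\uu{1}\kappa)\to \ihom(f_*x,f_*f\uu{1}\kappa)\xrightarrow{\ihom(1,\eps)}\ihom(f_*x,\kappa)$, which is equivalent to your description via~$e_x$).

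Your plan --- transport both composites through the self-adjunction $\dual_\kappa\dashv\dual_\kappa$ and identify each adjunct with the total evaluation~$e_x$ --- is sound in outline, and your description of $\zeta$ as the adjunct of $e_x$ is correct. The left-bottom path is indeed straightforward. The top-right path is where the real work lies, and your sketch there is thin: you need to unfold $\zeta^{-1}_{\dual_{\kappa'}(x)}$, not $\zeta$, and trace how $f_*(\varpi_x)$ interacts with it; this is doable but is precisely the ``far from obvious'' part the paper declines to spell out. Your mates alternative is also viable and is essentially what \cite{CalmesHornbostel09} systematizes. Either way your route is more self-contained than the paper's citation, at the cost of a chase the authors judged too long to include.
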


\begin{proof}
It is far from obvious to verify that~\eqref{eq:duality_preserving} commutes, but fortunately this has already been proved, in even greater generality, in \cite[Thm.~4.2.9]{CalmesHornbostel09}.  Indeed, we are proving that the functor $f_*: \cat C\to \cat D$, or rather the pair $(f_*, \zeta)$, is a ``duality-preserving functor'' in the sense of \cite[Def.\,2.2.1]{CalmesHornbostel09} between the ``categories with (weak) duality'' $(\cat C,\dual_{\kappa'})$ and $(\cat D, \dual_\kappa)$.  The hypotheses $(\mathbf A_f)$, $(\mathbf B_f)$ and $(\mathbf C_f)$ of the cited theorem are all satisfied in our situation by virtue of Corollary~\ref{cor:base} (note that our $f\uu{1}$ is denoted $f^!$ in \emph{loc.\,cit.}).  To see that the conclusion of the cited theorem applies here, we must still verify that the natural maps we denote by~$\pi$ and $\zeta$ coincide with the homonymous maps of \cite{CalmesHornbostel09}.  For~$\pi$, it suffices to inspect the definitions in Proposition~\ref{Prop:right_proj_formula} and \cite[Prop.\,4.2.5]{CalmesHornbostel09} and note that they agree.  For~$\zeta$, we must compare our definition of~\eqref{eq:inthom-f_*-f^1} as a conjugate of~$\pi$ (which then specializes to~\eqref{eq:f_*-intertwines-dualities}) with the definition of $\zeta$ given in \cite[Thm.\,4.2.9]{CalmesHornbostel09}.  In more detail, we must show that our map~\eqref{eq:inthom-f_*-f^1} coincides with the composite
\[f_*\ihom(x,f\uu{1}y) \to \ihom(f_*x,f_*f\uu{1}y) \xra{\ihom(1,\eps)} \ihom(f_*x,y)\]
where the first map is the canonical map
$f_*\ihom(a,b) \to \ihom(f_*a,f_*b)$
induced by the (lax) monoidal structure on $f_*$.  This is readily checked from the definition of~\eqref{eq:inthom-f_*-f^1} in terms of $\pi$, together with the first diagram in~\eqref{eq:lax-pi-auxiliary}.
\end{proof}

\begin{Def}
\label{def:cpt_pullback}
If $\cat D_0$ is a $\cat D^c$-submodule of $\cat D$, we define its \emph{compact pull-back along $f_*$} as the following full subcategory of~$\cat C$:
\[
f^\#(\cat D_0):= \{ x\in \cat C\mid f_*(c \otimes x) \in \cat D_0 \textrm{ for all }c\in \cat C^c \} \,.
\]
One sees immediately that $f^\#(\cat D_0)$ is a $\cat C^c$-submodule of~$\cat C$, because $\cat C^c$ is one.
\end{Def}

We can use this compact pull-back $f^\#$ to rephrase Grothendieck-Neeman duality:
\begin{Prop} \label{Prop:Crelcpt_thick}
Let $f^*: \cat D\to \cat C$ be as in Hypothesis~\ref{hyp:base}.
\begin{enumerate}[\indent\rm(a)]
\item
The functor $f^*$ satisfies Grothendieck-Neeman duality (Theorem~\ref{thm:GN}) if and only if the following inclusion holds: $\cat C^c \subset f^\#(\cat D^c)$.
\item
If $\cat C^c = f^\# (\cat D^c)$ then we have the Wirthm\"uller isomorphism (Theorem~\ref{thm:Wirthmueller}).
\end{enumerate}
\end{Prop}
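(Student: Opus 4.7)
The key observation is that part (a) reduces to recognizing that Neeman's criterion (GN\ref{it:Neeman}) --- namely that $f_*$ preserves compact objects --- can be repackaged in terms of the compact pull-back~$f^\#$, and that part (b) then follows by combining this with the third sufficient condition listed at the end of Theorem~\ref{thm:Wirthmueller}.

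For part (a), I would prove both directions directly from the definition of $f^\#(\cat D^c)$. For the $(\Leftarrow)$ direction, given any $x\in\cat C^c$, plug $c:=\unit_{\cat C}\in\cat C^c$ into the defining condition to deduce $f_*(x)=f_*(\unit\otimes x)\in\cat D^c$; thus $f_*$ preserves compact objects, which is condition (GN\ref{it:Neeman}) and hence implies Grothendieck-Neeman duality by Theorem~\ref{thm:GN}. For the $(\Rightarrow)$ direction, assume GN duality, so $f_*$ preserves compact objects. Then for any $x,c\in\cat C^c$, the tensor product $c\otimes x$ lies in $\cat C^c$ (recall from Remark~\ref{Rem:selfduality_on_cpts} that $\cat C^c$ is closed under~$\otimes$, since rigid objects are closed under tensor), hence $f_*(c\otimes x)\in\cat D^c$, which is precisely the statement $x\in f^\#(\cat D^c)$.

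For part (b), assume $\cat C^c=f^\#(\cat D^c)$. The inclusion $\cat C^c\subseteq f^\#(\cat D^c)$ gives Grothendieck-Neeman duality by part~(a). The reverse inclusion $f^\#(\cat D^c)\subseteq\cat C^c$ unpacks to the following statement: if $f_*(c\otimes x)$ is compact for every compact $c\in\cat C^c$, then $x\in\cat C$ is compact. But this is exactly condition~(3) in the ``finally'' part of Theorem~\ref{thm:Wirthmueller}. Hence we are in the situation where that theorem yields the Wirthm\"uller isomorphism (W\ref{it:far-left})--(W\ref{it:ad-lib}).

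There is no serious obstacle; the proof is essentially a bookkeeping exercise combining Definition~\ref{def:cpt_pullback} with Theorems~\ref{thm:GN} and~\ref{thm:Wirthmueller}. The only mildly substantive point is recognizing that the defining condition of $f^\#(\cat D^c)$, when applied with $c=\unit$, already captures Neeman's criterion, and that requiring the condition for \emph{all} compact~$c$ (rather than just $c=\unit$) is automatic once $\cat C^c$ is $\otimes$-closed.
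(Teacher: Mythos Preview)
Your proposal is correct and follows essentially the same approach as the paper: both arguments reduce~(a) to Neeman's criterion by plugging $c=\unit$ for one direction and using that $\cat C^c$ is $\otimes$-closed for the other, and both observe that the reverse inclusion in~(b) is precisely sufficient condition~(3) of Theorem~\ref{thm:Wirthmueller}.
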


\begin{proof}
Hypothesis~(GN\,\ref{it:Neeman}) of Theorem~\ref{thm:GN-intro} reads: $f_*(c)\in\cat D^c$ for all $c\in\cat C^c$. Since $\unit\in\cat C^c$ and since $\cat C^c$ is stable under the tensor product, this condition is equivalent to: $f_*(c\otimes x)\in\cat D^c$ for all $x,c\in\cat C^c$. The latter exactly means $\cat C^c\subset f^\#(\cat D^c)$ by Definition~\ref{def:cpt_pullback}. Hence~(a).
For~(b), it suffices to note that $\cat C^c \supset f^\# (\cat D^c)$ is precisely the sufficient condition~\eqref{it:rel-cpt} in Theorem~\ref{thm:Wirthmueller}.
\end{proof}

Furthermore, compact pullback is compatible with composition of functors:
\begin{Prop}
\label{prop:trans_pullback}%
Consider two composable functors $\cat E \stackrel{g^*}{\longrightarrow} \cat D\stackrel{f^*}{\longrightarrow} \cat C$, both satisfying Hypothesis~\ref{hyp:base} and with composite $f^*g^*=:(gf)^*$, and let $\cat E_0$ be any $\cat E^c$-submodule of~$\cat E$.
Then $(gf)^\#(\cat E_0)=f^\# (g^\#(\cat E_0))$.
\end{Prop}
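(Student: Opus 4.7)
The plan is to unwind both sides of the equality directly from the definition and show the two inclusions, using only the projection formula and the fact that $f^*$ preserves compact objects (Corollary~\ref{cor:base}). Since both $\cat C_0 = (gf)^\#(\cat E_0)$ and $\cat C_0' = f^\#(g^\#(\cat E_0))$ are full subcategories of $\cat C$, I just need to compare the membership conditions.

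First, I would observe that $(gf)_* = g_* f_*$, being right adjoint to $(gf)^* = f^* g^*$. Hence $x \in (gf)^\#(\cat E_0)$ if and only if $g_* f_*(c \otimes x) \in \cat E_0$ for every $c \in \cat C^c$, while $x \in f^\#(g^\#(\cat E_0))$ if and only if for every $c \in \cat C^c$ and every $d \in \cat D^c$ one has $g_*\bigl(d \otimes f_*(c \otimes x)\bigr) \in \cat E_0$. Next, I would invoke the projection formula of Proposition~\ref{Prop:right_proj_formula} to rewrite the latter condition as $g_* f_*(f^*(d) \otimes c \otimes x) \in \cat E_0$ for all $c \in \cat C^c$, $d \in \cat D^c$.

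For the inclusion $(gf)^\#(\cat E_0) \subseteq f^\#(g^\#(\cat E_0))$, suppose $x \in (gf)^\#(\cat E_0)$. Given any $c \in \cat C^c$ and $d \in \cat D^c$, the object $f^*(d) \otimes c$ belongs to $\cat C^c$ because $f^*$ preserves compact objects and $\cat C^c$ is stable under $\otimes$. So we may plug $c' := f^*(d) \otimes c$ into the defining condition for $(gf)^\#(\cat E_0)$ to conclude $g_* f_*(f^*(d) \otimes c \otimes x) \in \cat E_0$, which by the preceding paragraph is exactly what is needed.

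For the reverse inclusion, suppose $x \in f^\#(g^\#(\cat E_0))$. Given any $c \in \cat C^c$, the tensor unit $\unit_{\cat D}$ lies in $\cat D^c$, so taking $d = \unit_{\cat D}$ in the characterization of $f^\#(g^\#(\cat E_0))$ yields $g_* f_*(f^*(\unit_{\cat D}) \otimes c \otimes x) \cong g_* f_*(c \otimes x) \in \cat E_0$, as required. There is no genuine obstacle here: the only subtlety is remembering that $f^*(d) \otimes c$ remains compact (needed for the first inclusion) and that one is permitted to take $d = \unit$ (used for the second); the rest is purely formal manipulation of the defining conditions.
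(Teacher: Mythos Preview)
Your proof is correct and follows essentially the same approach as the paper's own proof: both unwind the two membership conditions, apply the projection formula to rewrite $g_*(d\otimes f_*(c\otimes x))$ as $g_*f_*(f^*(d)\otimes c\otimes x)$, and then observe that the objects $f^*(d)\otimes c$ with $c\in\cat C^c$, $d\in\cat D^c$ range exactly over $\cat C^c$ (using compactness of $f^*(d)\otimes c$ for one direction and $d=\unit_{\cat D}$ for the other). The paper phrases this as a direct equivalence of conditions rather than two inclusions, but the content is identical.
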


\begin{proof}
Notice that the composite $(gf)^*$ also satisfies Hypothesis~\ref{hyp:base}, and that its right adjoint must be $(gf)_*\cong g_* f_*$ by the uniqueness of adjoints.
Thus $x\in \cat C$ belongs to $(gf)^\#(\cat E_0)$ iff $g_*f_*(c\otimes x)\in \cat E_0$ for all $c\in \cat C^c$.

On the other hand: $x\in f^\#(g^\#(\cat E_0))$ iff $f_*(c\otimes x)\in g^\#(\cat E_0)$ for all $c\in \cat C^c$,
iff $g_*(d\otimes f_*(c\otimes x))\in \cat E_0$ for all $c\in \cat C^c$ and $d\in \cat D^c$.
By the projection formula~\eqref{eq:right_proj_formula}, we see that
$
g_*(d\otimes f_*(c\otimes x)) \cong g_*f_*(f^*(d) \otimes c \otimes x)
$.

Since each $f^*(d) \otimes c$ as above is compact in $\cat C$, and since every compact of $\cat C$ has this form (choose $d=\unit_\cat D$),  the two conditions on~$x$ are equivalent.
\end{proof}

Let us give an example of this compact pullback $f^\#$ in algebraic geometry.

\begin{Thm} \label{thm:cpt-pb-AG}
Let $f:X\to Y$ be a morphism of noetherian schemes and let $f^*:\cat D=\Dqc(Y)\too \Dqc(X)=\cat C$ be the induced functor (see Ex.\,\ref{exa:alg_geom_general}).
\begin{enumerate}[\indent\rm(a)]
\item
\label{it:f_*(coh)}%
Suppose that $f:X\to Y$ is proper. Then $f_*:\cat C\to \cat D$ maps $\Db(\coh X)$ into~$\Db(\coh Y)$.
Moreover, for every object $x\in\Db(\coh X)$ and every perfect $c\in\Dqc(X)^c$ we have $f_*(c\otimes x)\in \Db(\coh Y)$.
\item
\label{it:proj}%
Suppose that $f:X\to Y$ is projective. Then the following converse to~\eqref{it:f_*(coh)} holds\,: If $x\in \Dqc(X)$ is such that $f_*(c\otimes x)\in\Db(\coh Y)$ for every perfect $c\in\Dqc(X)^c$ then $x\in\Db(\coh X)$.
\end{enumerate}
In the notation of Definition~\ref{def:cpt_pullback}, we have for $f:X\to Y$ projective that
$$
f^\#\big(\Db(\coh Y)\big)=\Db(\coh X).
$$
\end{Thm}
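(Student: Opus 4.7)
The plan is to deduce the final equality $f^\#(\Db(\coh Y))=\Db(\coh X)$ from the two inclusions in~(a) and~(b). For part~(a), I would invoke Grothendieck's finiteness theorem, which settles the first assertion that $f_*$ takes $\Db(\coh X)$ to $\Db(\coh Y)$ for proper $f$. The second assertion then reduces to the first, since tensoring with a perfect complex preserves $\Db(\coh X)$: a perfect complex $c\in\Dperf(X)$ has locally---and, by quasi-compactness, globally---bounded Tor-amplitude, so $c\otimes x\in\Db(\coh X)$ whenever $x\in\Db(\coh X)$.

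For the harder part~(b), I would first reduce to the case $Y=\Spec(A)$ affine, noting that it suffices for the Beilinson argument below to use only the perfect complexes $L^{-k}|_V$ arising by restriction from $X$ to affine opens $V\subset Y$. By projectivity, fix a closed immersion $i\colon X\hookrightarrow\bbP^n_A=:\bar X$ and set $L:=i^*\cO_{\bar X}(1)$. The crucial input is Beilinson's \emph{bounded} locally free resolution of the diagonal on $\bar X\times_A\bar X$,
\[
\cO_\Delta\;\simeq\;\bigl[\cO(-n)\boxtimes\Omega^n(n)\to\cdots\to\cO\boxtimes\cO\bigr].
\]
Applied to $i_*x\in\Dqc(\bar X)$ via pullback along $\pi_1$, tensoring, and pushforward along $\pi_2$ (using flat base change along $\bar X\to\Spec A$ and the projection formula for both $\pi_2$ and the closed immersion~$i$), this expresses $i_*x$ as the totalization of a bounded complex whose $k$-th term is
\[
f_*(L^{-k}\otimes x)\otimes_A\Omega^k(k),\qquad 0\le k\le n.
\]
Each $L^{-k}$ being perfect on $X$, our hypothesis gives $f_*(L^{-k}\otimes x)\in\Db(\coh Y)$, so every term lies in $\Db(\coh\bar X)$ and hence so does the totalization $i_*x$. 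Finally, since $i$ is a closed immersion, $i_*$ is exact with $\rmH^k(i_*x)=i_*\rmH^k(x)$ and identifies $\coh X$ with the full subcategory of coherent sheaves on $\bar X$ supported on~$X$; it follows that $x\in\Db(\coh X)$, as desired.

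The key substantive step is the appeal to Beilinson's resolution, which reduces what appears to be a test against \emph{all} perfect complexes to the finite family $\{L^{-k}\}_{0\le k\le n}$. Without this trick, proving~(b) would seem to require a more delicate analysis via Serre's correspondence between coherent sheaves on a projective scheme and finitely generated graded modules over its homogeneous coordinate ring.
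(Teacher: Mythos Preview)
Your proposal is correct and follows essentially the same strategy as the paper: Grothendieck's finiteness theorem for~(a), and Beilinson's resolution of the diagonal on~$\bbP^n$ together with the fact that a closed immersion $i_*$ detects bounded coherence for~(b).

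The only difference is organizational. The paper invokes the transitivity property $f^\#=(i)^\#\circ(g)^\#$ (Proposition~\ref{prop:trans_pullback}) to treat the closed immersion $i:X\hookrightarrow\bbP^n_Y$ and the projection $g:\bbP^n_Y\to Y$ as two \emph{separate} cases, each checked directly (the first trivially, the second by Beilinson over the arbitrary base~$Y$). You instead push $x$ forward to $i_*x$ and run Beilinson once on~$\bbP^n_A$, combining the two steps; this forces you into the reduction to affine~$Y$ and the accompanying remark that only the restrictions $L^{-k}|_{f^{-1}V}$ are needed. The paper's decomposition avoids this wrinkle entirely, since Beilinson works over any base and no extension of perfect complexes is required. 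Both arguments are equally valid; the paper's is a little cleaner, yours a little more hands-on.
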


\begin{proof}
For~\eqref{it:f_*(coh)}, the question being local in the base~$Y$, we can assume that $Y=\Spec(A)$ is affine. For any coherent sheaf~$\mathcal{F}\in\coh(X)$,  the $A$-modules $\RR^i f_*\mathcal{F}$ are finitely generated and vanish for $i>\!\!>0$, by~\cite[III.3.2.3]{EGA3}. It follows that $f_*\mathcal{F}=\RR f_*\mathcal{F}$ is bounded coherent. Hence so is $f_*(x)$ for any $x\in \Dqc(X)$ contained in the thick subcategory generated by coherent sheaves, which is precisely~$\Db(\coh X)$. The ``moreover part" follows immediately since $\Db(\coh X)$ is a $\Dperf(X)$-submodule of~$\Dqc(X)$, where $\Dperf(X)=\Dqc(X)^c$.

For~\eqref{it:proj}, in view of Proposition~\ref{prop:trans_pullback}, and since we can decompose $f$ into a closed immersion followed by the structure morphism $\bbP^n_Y\to Y$, we treat the two cases separately. For $f:X\to Y$ a closed immersion, the result is straightforward since $f_*$ itself detects boundedness and coherence. (One can reduce to $Y$ affine -- in any case, $f_*$ commutes with taking homology and detects coherence, so $f_*(x)\in\Db(\coh Y)$ forces $x$ to be bounded with coherent homology, \ie to be in~$\Db(\coh X)$.) For the projection~$f: \bbP^n_Y\to Y$, we can use the resolution of the diagonal $\cO_{\mathbb{\Delta}}$ in $\bbP^n\times_Y\bbP^n$ by objects of the form $p_1^*d_i\otimes p_2^* c_i$ where $p_1,p_2:\bbP^n\times \bbP^n\to \bbP^n$ are the two projections and $d_i=\Omega^i(i)$ and $c_i=\cO(-i)$ are vector bundles over~$\bbP^n$ and in particular compact objects; see~\cite{Beilinson83}. Hence, since every $x\in \Dqc(\bbP^n_Y)$
is isomorphic to $(p_2)_*(\cO_{\mathbb{\Delta}}\otimes p_1^*(x))$, we see that $x$ belongs to the thick subcategory generated by the $(p_2)_*\big(p_1^*d_i\otimes p_2^* c_i\otimes p_1^*(x))$. Computing the latter using the projection formula and flat base change (in the cartesian square for $\bbP^n\times_Y \bbP^n$ over~$Y$), we get
$$
(p_2)_*\big(p_1^*d_i\otimes p_2^* c_i\otimes p_1^*(x))
\;\;\cong\;\; (p_2)_*(p_1^*(x\otimes d_i))\otimes c_i
\;\;\cong\;\; f^*(f_*(x\otimes d_i))\otimes c_i\,.
$$
In particular, as soon as $f_*(x\otimes d_i)\in \Db(\coh Y)$ then all the objects above belong to $\Db(\coh\bbP^n_Y)$, hence so does~$x$ itself.
\end{proof}

\begin{Cor}
\label{cor:cpt-pb-AG}%
Let $f:X\to S$ be a projective morphism of noetherian schemes, with $S$ regular (for instance $S=\Spec(\kk)$ for $\kk$ a field). Then $\Db(\coh X)$ is equal to~$\SET{x\in\Dqc(X)}{f_*(c\otimes x)\in\Dperf(S)\textrm{ for all }c\in\Dperf(X)}$.
\end{Cor}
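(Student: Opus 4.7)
The plan is to deduce this immediately from Theorem~\ref{thm:cpt-pb-AG}\,\eqref{it:proj}, which established the identity
\[
f^\#\bigl(\Db(\coh S)\bigr) \;=\; \Db(\coh X)
\]
for any projective morphism of noetherian schemes, where $f^\#(-)$ is the compact pull-back of Definition~\ref{def:cpt_pullback}. Unpacking the definition, this is exactly the equality being asserted, except that $\Db(\coh S)$ appears in place of $\Dperf(S)$. Thus the only thing to verify is that the regularity hypothesis on~$S$ forces the coincidence
\[
\Dperf(S) \;=\; \Db(\coh S).
\]

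First I would recall that for \emph{any} noetherian scheme the inclusion $\Dperf(S)\subseteq \Db(\coh S)$ is automatic, since a perfect complex is (Zariski-locally) a bounded complex of finitely generated locally free modules, hence has bounded coherent cohomology. For the reverse inclusion I would use regularity: on a regular noetherian scheme every coherent sheaf has finite projective dimension (Auslander--Buchsbaum--Serre, applied at each stalk together with a standard gluing of finite locally free resolutions), so any object of $\Db(\coh S)$ is quasi-isomorphic to a bounded complex of finitely generated locally free sheaves and is therefore perfect.

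Putting the two ingredients together, $f^\#(\Dperf(S)) = f^\#(\Db(\coh S)) = \Db(\coh X)$, which is precisely the claimed description of $\Db(\coh X)$ as the set of $x\in\Dqc(X)$ such that $f_*(c\otimes x)\in \Dperf(S)$ for every $c\in \Dperf(X)$. There is no serious obstacle here: the whole content of the corollary is a direct specialization of the preceding theorem, and the only point that requires argument (the identification $\Dperf(S)=\Db(\coh S)$ under regularity) is classical.
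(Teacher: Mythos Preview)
Your proposal is correct and follows exactly the paper's approach: invoke Theorem~\ref{thm:cpt-pb-AG} and use the classical identification $\Dperf(S)=\Db(\coh S)$ for $S$ regular noetherian. The paper's proof is a single sentence to this effect, while you have merely spelled out the well-known reason for the latter equality.
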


\begin{proof}
In this case, $\Dperf(S)=\Db(\coh S)$ and we can apply Theorem~\ref{thm:cpt-pb-AG}.
\end{proof}

\begin{center}
*\ *\ *
\end{center}

We now turn to the interaction between the two notions discussed above, namely that of dualizing object and that of compact pullback of subcategories.

\begin{Thm}\label{Thm:duality_iso}
Let $f^*\colon \cat D\to \cat C$ be as in Hypothesis~\ref{hyp:base}.
Let $\cat D_0\subset \cat D$ be a $\cat D^c$-submodule equipped with a dualizing object $\kappa\in \cat D_0$ (Def.\,\ref{def:dualizing_object}) and consider the following two possible properties of an object $x\in \cat C$:
\begin{enumerate}[\indent\rm(i)]
\item $x\in f^\#(\cat D_0)$.
\label{it:relative_compact}%
\item $x$ is $f\uu{1}(\kappa)$-reflexive: $x \stackrel{\sim}{\to} \dual_{f\uu{1}(\kappa)} \dual_{f\uu{1}(\kappa)} (x)$.
\label{it:reflexive}%
\end{enumerate}
Then we have\,:
\begin{enumerate}[\indent\rm(a)]
\item
\label{it:i=>ii}%
If $\unit\in\cat D_0$ then~\eqref{it:relative_compact} implies~\eqref{it:reflexive}.
\item
\label{it:ii=>i}%
If $\cat D_0$ consists precisely of the $\kappa$-reflexive objects of~$\cat D$, then~\eqref{it:reflexive} implies~\eqref{it:relative_compact}.
\end{enumerate}
\end{Thm}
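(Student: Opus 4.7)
The plan is to exploit the commutative square~\eqref{eq:duality_preserving} from Theorem~\ref{thm:duality_preserving}. Writing $\kappa' := f\uu{1}(\kappa)$ and observing that both $\zeta$-maps in that square are isomorphisms, one obtains the following transfer principle: for any $y \in \cat C$, the morphism $f_*(\varpi_{\kappa'}(y))$ is an isomorphism in $\cat D$ if and only if $\varpi_\kappa(f_*(y))$ is. This is the bridge between $\kappa'$-reflexivity in $\cat C$ and $\kappa$-reflexivity in $\cat D$ that will drive both parts of the argument.

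For part~\eqref{it:ii=>i}, I first check that if $x$ is $\kappa'$-reflexive then so is $c \otimes x$ for every compact $c \in \cat C^c$. Applying the extraction-of-rigids Lemma~\ref{Lem:extraction_of_rigids} twice yields a natural isomorphism $\dual_{\kappa'}^2(c \otimes x) \cong c \otimes \dual_{\kappa'}^2(x)$, and a coherence check (see below) shows that this isomorphism intertwines the canonical biduality maps $\varpi_{\kappa'}(c \otimes x)$ and $c \otimes \varpi_{\kappa'}(x)$. Thus $c \otimes x$ is $\kappa'$-reflexive. By the transfer principle, $f_*(c \otimes x)$ is $\kappa$-reflexive, and by our hypothesis on~$\cat D_0$ it therefore belongs to~$\cat D_0$, giving $x \in f^\#(\cat D_0)$.

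For part~\eqref{it:i=>ii}, given $x \in f^\#(\cat D_0)$, each $f_*(c \otimes x)$ lies in~$\cat D_0$, so $\varpi_\kappa(f_*(c \otimes x))$ is an isomorphism by the defining property of the dualizing object~$\kappa$ (cf.~Lemma~\ref{Lem:duality_criterion}). The transfer principle, combined once more with the compatibility of $\varpi$ under extraction of rigids, upgrades this to: $f_*(c \otimes \varpi_{\kappa'}(x))$ is an isomorphism for every $c \in \cat C^c$. To conclude that $\varpi_{\kappa'}(x)$ itself is an isomorphism, I use compact generation of $\cat C$: every compact object of $\cat C$ is isomorphic to $\dual c$ for some $c \in \cat C^c$, and rigidity of $c$ together with the adjunction $f^* \dashv f_*$ yields natural identifications
\[
\Homcat{C}(\dual c, -) \;\cong\; \Homcat{C}(\unit_{\cat C}, c \otimes -) \;\cong\; \Homcat{D}(\unit_{\cat D}, f_*(c \otimes -)).
\]
Under these, the map induced on Hom sets by $\varpi_{\kappa'}(x)$ becomes the map induced by the isomorphism $f_*(c \otimes \varpi_{\kappa'}(x))$, so $\varpi_{\kappa'}(x)$ is an isomorphism by Yoneda.

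The main obstacle I anticipate is the naturality step used in both parts: the extraction-of-rigids isomorphism $\dual_{\kappa'}^2(c \otimes x) \cong c \otimes \dual_{\kappa'}^2(x)$ must be shown to intertwine $\varpi_{\kappa'}(c \otimes x)$ with $c \otimes \varpi_{\kappa'}(x)$. This is a routine but slightly lengthy diagram chase starting from the adjoint description of $\varpi$ (Remark~\ref{rem:dualizing}) and the explicit construction of the comparison map in Lemma~\ref{Lem:extraction_of_rigids}, in the same spirit as the coherence manipulations already performed in the proof of Theorem~\ref{thm:GN}.
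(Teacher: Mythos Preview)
Your argument is correct and, in fact, more streamlined than the paper's own proof. The key difference is that you systematically exploit the commutative square~\eqref{eq:duality_preserving} of Theorem~\ref{thm:duality_preserving} as a ready-made ``transfer principle'' between $\kappa$-reflexivity in~$\cat D$ and $\kappa'$-reflexivity in~$\cat C$, whereas the paper---despite having just proved that square---does not invoke it directly. For part~\eqref{it:ii=>i} the two arguments are essentially the same computation reorganized: the paper strings together the extraction-of-rigids isomorphism and $\zeta$ into one chain $f_*(x\otimes c)\cong \dual_\kappa^2 f_*(x\otimes c)$ and then checks this is~$\varpi_\kappa$, while you factor it as ``$c\otimes x$ is $\kappa'$-reflexive'' followed by transfer. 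For part~\eqref{it:i=>ii} the paper instead builds a long chain of Hom-set bijections $\cat C(c,x)\cong \cat C(c,\dual_{\kappa'}^2 x)$ (using $\unit\in\cat D_0$ to apply the equivalence $\dual_\kappa$ on~$\cat D_0$) and verifies it is post-composition by~$\varpi_{\kappa'}$; your route via transfer plus the identification $\Homcat{C}(\dual c,-)\cong \Homcat{D}(\unit,f_*(c\otimes -))$ is shorter and, notably, does not appear to use the hypothesis $\unit\in\cat D_0$ at all. The coherence check you flag---that the extraction isomorphism $\dual_{\kappa'}^2(c\otimes x)\cong c\otimes \dual_{\kappa'}^2(x)$ intertwines $\varpi_{\kappa'}(c\otimes x)$ with $c\otimes\varpi_{\kappa'}(x)$---is exactly the content of the last displayed diagram in the paper's proof of~\eqref{it:ii=>i}, so your assessment of the remaining work is accurate.
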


\begin{proof}
Let us prove~\eqref{it:relative_compact}$\Rightarrow$\eqref{it:reflexive} when $\unit\in\cat D_0$, and write $\kappa':=f\uu{1}(\kappa)$ for short.
Consider $x,y ,c \in \cat C$ with $c$ compact, $y$ arbitrary, and $x\in f^\#(\cat D_0)$, which implies in particular that $f_*(x\otimes \dual c)\in \cat D^c$.
We obtain the following isomorphism:
\begin{align*}
\cat C (c, x)
&\;\;\cong\;\; \cat C (\unit , x \otimes \dual c) && c \in \cat C^c \textrm{ is rigid}
\\
&\;\;\cong\;\; \cat D \big(\unit , f_*(x \otimes \dual c) \big) && \unit\cong f^*\unit\textrm{ and } f^* \dashv f_* \\
&\;\;\cong\;\; \cat D \big(\Dk f_*(x\otimes \dual c) , \kappa \big) && \unit\textrm{ and } f_*(x\otimes \dual c)\in \cat D_0 \textrm{ and }\Dk:\cat D_0\op\isoto\cat D_0 \\
&\;\;\cong\;\; \cat D \big( f_* \Dkk(x\otimes \dual c), \kappa \big) &&
  \textrm{by~\eqref{eq:f_*-intertwines-dualities}, special case of~}\eqref{eq:inthom-f_*-f^1}
\\
&\;\;\cong\;\; \cat C \big( \Dkk(x\otimes \dual c), \kappa' \big) &&  f_* \dashv f\uu{1} \textrm{ and }f\uu{1}(\kappa)=\kappa'
\\
&\;\;\cong\;\; \cat C \big( \Dkk(x) \otimes c, \kappa' \big) &&
   c\in\cat C^c\textrm{ is rigid and Lemma~\ref{Lem:extraction_of_rigids}}  \\
&\;\;\cong \;\;  \cat C (c, \Dkk \Dkk (x)) && \otimes \dashv \ihomcat{C} \,.
\end{align*}
By following through this composite isomorphism, one can check that it is induced by the canonical map~\eqref{eq:can_kappa}.
Indeed, choosing an arbitrary morphism $\varphi:c\to x$ and writing $[-,-]$ for $\ihom(-,-)$, the relevant diagram can be checked using the naturality of the isomorphism
$f_*[a,f\uu{1}b] \cong [f_* a,b]$ in~\eqref{eq:inthom-f_*-f^1} with respect to the morphism $\unit\oto{\eta} c\otimes \dual c\,\otoo{\varphi\otimes 1}\, x\otimes \dual c$, together with the following three diagrams:
\[\xymatrix @R=3mm @C=7.5mm{ [c,\kappa']\otimes c \ar[dd]^{\ev} \ar[r]^{\sim}_-{\textrm{\ref{Lem:extraction_of_rigids}}}
    & [c \otimes \dual c, \kappa'] \ar[dd]^{[\eta,1]}
	& [\unit,\kappa'] \ar[dd]^{\sim} \ar[r]_-{f_*\adj f^{(1)}}
    & f\uu{1}f_*[\unit,\kappa'] \ar[r]^{\sim}_-{\eqref{eq:inthom-f_*-f^1}} & f\uu{1}[f_*\unit,\kappa] \ar[dd]_-{f^*\adj f_*} \\\\
		\kappa' \ar[r]^{\sim} & [\unit,\kappa']
		& \kappa' \ar@{=}[r] & f\uu{1}\kappa \ar[r]^{\sim} & f\uu{1}[\unit,\kappa]}
\]
\[\xymatrix @R=3mm @C=8.7mm{
 c \ar[rr]^-{\varphi} \ar[dd]^{\coev} && x \ar[rr]^-{\coev} && [[x,\kappa'],[x,\kappa']\otimes x] \ar[dd]^{[1,\ev]}\\\\
		[[x,\kappa'],[x,\kappa']\otimes c] \ar[rr]^-{[1,[\varphi,1]\otimes 1]} && [[x,\kappa'],[c,\kappa']\otimes c] \ar[rr]^-{[1,\ev]} && [[x,\kappa'],\kappa']].}\kern.6mm
\]
The first diagram can be checked using the definition of $[c,\kappa']\otimes c \cong [c\otimes \dual c,\kappa']$ (cf.~Lemma~\ref{Lem:extraction_of_rigids}) and dinaturality of coevaluation with respect to $\eta:\unit \ra c \otimes \dual c$.
Similarly, the second diagram can be checked using the definition of $f_*[\unit,\kappa'] \cong [f_* \unit,\kappa]$ together with dinaturality of coevaluation with respect to $\unit \ra f_*\unit$.
Finally, the last diagram follows from dinaturality of (co)evaluation and naturality of evaluation
with respect to~$\varphi:c\ra x$.
As $\cat C$ is compactly generated, the canonical map~\eqref{eq:can_kappa} is invertible for any $x$, as claimed.

For~\eqref{it:ii=>i}, in order to prove the conditional implication~\eqref{it:reflexive}$\Rightarrow$\eqref{it:relative_compact}, assume that $x\in \cat C$ is $\kappa'$-reflexive and let $c\in \cat C^c$ be a compact object.
We must show that $f_*(x\otimes c)$ belongs to~$\cat D_0$.
By the extra hypothesis, it suffices to show that $f_*(x \otimes c)\in \cat D$ is $\kappa$-reflexive.
We have the following composite isomorphism starting with $\kappa'$-reflexivity of~$x$ and $\unit$-reflexivity of~$c$\,:
\begin{align*}
f_* ( x  \otimes c )
&\;\; \cong \;\;  f_* \big( \dual_{\kappa'}  \dual_{\kappa'} (x ) \otimes \dual \dual(c) \big) && \\
&\;\; \cong \;\;  f_* \dual_{\kappa'} \dual_{\kappa'} (x \otimes c) && \textrm{Lemma~\ref{Lem:extraction_of_rigids}}\textrm{ twice}\\
&\;\; \cong \;\;  \dual_\kappa \dual_\kappa f_*(x \otimes c) && \eqref{eq:f_*-intertwines-dualities}\textrm{ twice.}
\end{align*}
To check that this isomorphism $f_*(x\otimes c) \isoto \dual_\kappa \dual_\kappa f_*(x\otimes c)$ coincides
with the canonical map~\eqref{eq:can_kappa}, it suffices to check that it is adjoint to the
evaluation map $[f_*(x\otimes c),\kappa]\otimes f_*(x\otimes c)\otoo{\ev} \kappa$. This can be accomplished
from the definitions by using dinaturality of evaluation with respect
to $f_*\dual_{\kappa'}(x\otimes c) \cong \dual_\kappa (f_*(x\otimes c))$ together with the following two commutative diagrams:
\[\xymatrix{
		f_*[a,f\uu{1}(b)]\otimes f_*(a) \ar[r]^-{\mathrm{lax}} \ar[d]^{\cong} & f_*([a,f\uu{1}(b)]\otimes a] \ar[r]^-{f_*\ev} & f_*f\uu{1}(b) \ar[d]^{\eps} \\
		[f_*(a),b]\otimes f_*(a) \ar[rr]^-{\ev}&& b
	}\]
and
$$
\xymatrix@R=4mm@C=5em{
  \dual_{\kappa}(x\otimes c)\otimes x\otimes c  \ar[r]^-{1\otimes\varpi_{\kappa}\otimes\varpi} \ar[dd]_-{\cong}
& \dual_{\kappa}(x\otimes c)\otimes \dual_{\kappa}^2x \otimes \dual^2c \ar[d]^-{\ \switch}
\\
& \dual_{\kappa}^2x\otimes \dual^2c\otimes \dual_{\kappa}(x\otimes c) \ar[d]_-{\cong}
\\
  \kappa
& \dual_{\kappa}^2(x\otimes c)\otimes \dual_{\kappa}(x\otimes c). \ar[l]_-{\ev}
}
$$
The first diagram can be checked in a straightforward manner by using the definition of $f_*[a,f^!b]\cong [f_*a,b]$ in~\eqref{eq:inthom-f_*-f^1}.
The second diagram can be checked using the definition of the maps $\varpi_{\kappa}:x\to \dual_{\kappa}^2x$ and $\varpi:c\to \dual^2c$ together with the fact that
$$
\xymatrix{
  [a,b]\otimes [a',b']\otimes a\otimes a' \ar[r]^-{\switch} \ar[d]_-{\cong}
& [a,b]\otimes a\otimes [a',b']\otimes a'  \ar[d]^-{\ev\otimes\ev}
\\
  [a\otimes a',b\otimes b']\otimes a\otimes a' \ar[r]^-{\ev}
& b\otimes b'
}
$$
commutes (which can be checked from the definition).

We conclude that indeed $f_*(x \otimes c)$ is $\kappa$-reflexive and therefore belongs to~$\cat D_0$.
\end{proof}

Let us give an example illustrating part~\eqref{it:ii=>i} of the theorem.
\begin{Cor}
Let $f:X\to \Spec(k)$ be a projective scheme over a field, and let $\DO\in \Dqc(X)$ denote the relative dualizing object for~$f^*$.
Then $x\in\Dqc(X)$ is $\DO$-reflexive iff
$\dim_k H^i f_*(c\otimes x) < \infty$ for all $c\in\Dperf(X)$ and~$i\in\bbZ$.

\end{Cor}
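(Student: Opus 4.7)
The plan is to deduce the corollary as a direct application of Theorem~\ref{Thm:duality_iso} with a carefully chosen subcategory $\cat D_0 \subset \cat D := \Der(k)$. The relevant setup is $\cat C = \Dqc(X)$, $f^*: \Der(k) \to \Dqc(X)$ the derived pullback (which satisfies Hypothesis~\ref{hyp:base}), $\kappa := \unit_{\cat D} = k$ and therefore $\kappa' = f\uu{1}(k) = \DO$. Define $\cat D_0$ to be the full subcategory of those $y \in \Der(k)$ such that $\dim_k H^i y < \infty$ for every $i \in \bbZ$, \emph{without any boundedness assumption}. The goal is to apply both parts (a) and (b) of Theorem~\ref{Thm:duality_iso} to this $\cat D_0$; this requires showing that $\unit \in \cat D_0$ and that $\cat D_0$ coincides with the subcategory of $k$-reflexive objects of~$\cat D$.

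First I would verify that $\cat D_0$ is a thick $\cat D^c$-submodule: thickness is immediate since finite-dimensionality in each degree is stable under shifts, cones and retracts, and stability under $\cat D^c = \Dperf(k)$-tensor follows because for $c \in \Dperf(k)$ the graded pieces $H^i(c\otimes y) = \bigoplus_{p+q=i}H^p c \otimes_k H^q y$ involve only finitely many summands, each finite-dimensional. Clearly $k \in \cat D_0$.

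Next I would check that $k$ is a dualizing object for $\cat D_0$ and that the $k$-reflexive objects are precisely the elements of~$\cat D_0$. Because $k$ is a field, $\Der(k)$ is equivalent to the category of $\bbZ$-graded $k$-vector spaces; under this equivalence an object $y$ splits canonically as $y \cong \bigoplus_i V_i[-i]$ with $V_i := H^i y$, and one computes
\[
\dual_k(y) = R\!\Hom_k(y,k) \cong \prod_i V_i^*[i],
\qquad
\dual_k^2(y) \cong \bigoplus_i V_i^{**}[-i],
\]
with the canonical map $\varpi_\kappa: y \to \dual_k^2(y)$ being the canonical map $V_i \to V_i^{**}$ in degree~$i$. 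Hence $\dual_k$ sends $\cat D_0$ to itself, and $\varpi_\kappa$ is an isomorphism at $y$ iff every $V_i$ is finite-dimensional, iff $y \in \cat D_0$. This establishes simultaneously that $k$ is a dualizing object for $\cat D_0$ and that $\cat D_0$ is exactly the full subcategory of $k$-reflexive objects.

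With these hypotheses verified, Theorem~\ref{Thm:duality_iso}(a) gives that $x \in f^\#(\cat D_0)$ implies $x$ is $\DO$-reflexive, while Theorem~\ref{Thm:duality_iso}(b) gives the converse. Unwinding Definition~\ref{def:cpt_pullback}, the condition $x \in f^\#(\cat D_0)$ reads: $f_*(c\otimes x) \in \cat D_0$ for every $c \in \cat C^c = \Dperf(X)$, which by the definition of~$\cat D_0$ is exactly the stated condition $\dim_k H^i f_*(c\otimes x) < \infty$ for all $c \in \Dperf(X)$ and $i \in \bbZ$. The main subtlety (and the only point where the proof could go wrong) is the choice of~$\cat D_0$: one must avoid the obvious temptation to take $\cat D_0 = \Dperf(k)$, since that would force boundedness of $f_*(c\otimes x)$ in the conclusion; the unbounded variant above is the correct one, and its viability relies crucially on the semisimplicity of $\Der(k)$ which allows duality to behave well even without finiteness of amplitude.
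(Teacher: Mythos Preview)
The proposal is correct and follows exactly the paper's approach: identify $\cat D_0\subset\Der(k)$ as the full subcategory of objects with degreewise finite-dimensional homology, observe that this is precisely the class of $k$-reflexive objects, and apply both directions of Theorem~\ref{Thm:duality_iso} with $\kappa=\unit$. Your write-up simply spells out the verifications that the paper's two-line proof leaves implicit, including the important observation that one must allow unbounded objects in~$\cat D_0$.
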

\begin{proof}
An object of $\Der(k)$ is $k$-reflexive iff its homology groups are all finite dimensional.
Now apply Theorem~\ref{Thm:duality_iso} \eqref{it:i=>ii} \&~\eqref{it:ii=>i} to $f^*: \Der(k)\to \Dqc(X)$.
\end{proof}

The next theorem is the main result of this section.

\begin{Thm}[Grothendieck duality]
\label{thm:rel-Groth}%
Let $f^*\colon \cat D\to \cat C$ be as in our basic Hypothesis~\ref{hyp:base} and let $\kappa\in\cat D$. Recall $f^*\adj f_*\adj f\uu{1}$ from Corollary~\ref{cor:base}.
Suppose that $f^*$ satisfies Grothendieck-Neeman duality (Theorem~\ref{thm:GN-intro}) and that $\cat D_0\subset \cat D$ is a $\cat D^c$-submodule which admits $\kappa\in\cat D_0$ as dualizing object (Definition~\ref{def:dualizing_object}). Then
\[ \kappa':=f\uu{1}(\kappa)\cong\DO\otimes f^*(\kappa) \]
 is a dualizing object for the $\cat C^c$-submodule $f^\#(\cat D_0)\subset \cat C$ (Definition~\ref{def:cpt_pullback}).
In particular, $f_*:f^\#(\cat D_0)\too \cat D_0$ is a duality-preserving exact functor between categories with duality, where $f^\#(\cat D_0)$ is equipped with the duality~$\Dkk$ and $\cat D_0$ with~$\Dk$.
\end{Thm}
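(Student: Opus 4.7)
The plan is to verify the three conditions of Lemma~\ref{Lem:duality_criterion} for the candidate dualizing object $\kappa' = f\uu{1}(\kappa) \cong \DO \otimes f^*(\kappa)$ on the subcategory $f^\#(\cat D_0)$: namely (i) $\kappa'$ itself lies in $f^\#(\cat D_0)$, (ii) the functor $\dual_{\kappa'}$ sends $f^\#(\cat D_0)$ to itself, and (iii) every $x \in f^\#(\cat D_0)$ is $\kappa'$-reflexive. The identification $\kappa' \cong \DO \otimes f^*(\kappa)$ is immediate from the Grothendieck duality formula~\eqref{eq:Groth}.

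For (i), I would compute, for arbitrary $c \in \cat C^c$ and $d \in \cat D$, the natural chain of isomorphisms obtained by composing the adjunction $f^* \dashv f_*$, rigidity of $c$, the adjunction $f_* \dashv f\uu{1}$ that defines $\kappa'$, and the projection formula~\eqref{eq:right_proj_formula}; Yoneda then yields
\[
f_*(c \otimes \kappa') \cong \dual_\kappa\bigl(f_*(\dual c)\bigr).
\]
Under Grothendieck-Neeman duality $f_*$ preserves compacts, so $f_*(\dual c)$ is rigid and $\dual_\kappa f_*(\dual c) \cong \dual(f_*(\dual c)) \otimes \kappa$, which lies in $\cat D^c \otimes \cat D_0 \subseteq \cat D_0$ since $\cat D_0$ is a $\cat D^c$-submodule. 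For (ii), given $x \in f^\#(\cat D_0)$ and $c \in \cat C^c$, Lemma~\ref{Lem:extraction_of_rigids} applied to the rigid object $\dual c$ yields $c \otimes \dual_{\kappa'}(x) \cong \dual_{\kappa'}(x \otimes \dual c)$, and then the intertwining formula~\eqref{eq:f_*-intertwines-dualities} of Theorem~\ref{thm:duality_preserving} gives
\[
f_*\bigl(c \otimes \dual_{\kappa'}(x)\bigr) \cong \dual_\kappa\bigl(f_*(x \otimes \dual c)\bigr),
\]
which lies in $\cat D_0$, since $f_*(x \otimes \dual c) \in \cat D_0$ by the defining property of $f^\#(\cat D_0)$ and $\dual_\kappa$ preserves $\cat D_0$.

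The main obstacle is (iii), the reflexivity of each $x \in f^\#(\cat D_0)$ at $\kappa'$. This is essentially Theorem~\ref{Thm:duality_iso}\eqref{it:i=>ii}; the subtle point is that the stated hypothesis $\unit \in \cat D_0$ is not part of the present theorem, but can be circumvented. Following the proof of Theorem~\ref{Thm:duality_iso}, testing $\varpi_{\kappa'}$ against a compact $c \in \cat C^c$ yields the composite
\[
\cat C(c, x) \cong \cat D\bigl(\unit, f_*(x \otimes \dual c)\bigr) \cong \cat D\bigl(\dual_\kappa f_*(x \otimes \dual c), \kappa\bigr) \cong \cat C\bigl(\dual_{\kappa'}(x) \otimes c, \kappa'\bigr) \cong \cat C\bigl(c, \dual_{\kappa'}^2 x\bigr),
\]
in which the single step requiring $\unit \in \cat D_0$ is replaced by the $\kappa$-reflexivity of $f_*(x \otimes \dual c) \in \cat D_0$ itself, with the remaining adjunction steps and the diagram chase identifying the composite with $\varpi_{\kappa'}$ going through unchanged. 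Compact generation of $\cat C$ then promotes this to a genuine isomorphism $x \isoto \dual_{\kappa'}^2 x$.

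With (i)-(iii) established, Lemma~\ref{Lem:duality_criterion} delivers the assertion that $\kappa'$ is dualizing for $f^\#(\cat D_0)$. Finally, the duality-preserving statement for $f_*: f^\#(\cat D_0) \to \cat D_0$ follows by combining Theorem~\ref{thm:duality_preserving}, which already provides the intertwining $\dual_\kappa \circ f_* \cong f_* \circ \dual_{\kappa'}$ at the level of the ambient categories, with the observation that taking $c = \unit \in \cat C^c$ in the defining condition gives $f_*(x) \in \cat D_0$ for every $x \in f^\#(\cat D_0)$.
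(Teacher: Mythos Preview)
Your proof is correct and follows the same strategy as the paper: verify the conditions of Lemma~\ref{Lem:duality_criterion}. For (i) you compute $f_*(c\otimes\kappa')\cong\dual_\kappa(f_*(\dual c))$ directly from the adjunctions, whereas the paper uses the projection formula and ur-Wirthm\"uller to obtain $f\dd{1}(c)\otimes\kappa$; these agree since $f\dd{1}\cong\dual f_*\dual$ on compacts. For (iii) you are in fact more careful than the paper, which simply cites Theorem~\ref{Thm:duality_iso}\,\eqref{it:i=>ii} without addressing its hypothesis $\unit\in\cat D_0$, not assumed here; your replacement of the step $\cat D(\unit,y)\cong\cat D(\Dk y,\kappa)$ by the $\kappa$-reflexivity of $y=f_*(x\otimes\dual c)\in\cat D_0$ yields the same isomorphism and is a clean fix.
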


\begin{proof}
We have $\kappa'\cong \DO\otimes f^*(\kappa)$ by formula~\eqref{eq:Groth}. Moreover, by Theorem~\ref{Thm:duality_iso}, every object of $f^\#(\cat D_0)$ is $\kappa'$-reflexive, hence by Lemma~\ref{Lem:duality_criterion} it remains to prove that $\kappa' = \DO\otimes f^*\kappa$ belongs to~$f^\#(\cat D_0)$ and that $\dual_{\kappa'}$ preserves~$f^\#(\cat D_0)$.
Indeed for every compact~$c\in\cat C^c$, we can use the projection formula~\eqref{eq:right_proj_formula} and the ur-Wirthm\"uller formula~\eqref{eq:ur-Wirth} to compute
\[
f_*(\DO \otimes f^*\kappa \otimes c)
\;\;\cong\;\; f_*(\DO \otimes c) \otimes \kappa
\;\;\cong\;\; f\dd{1}(c)\otimes \kappa \,;
\]
this object belongs to~$\cat D_0$ since $\kappa$ does, since $f\dd{1}$ preserves compacts and since~$\cat D_0$ is a $\cat C^c$-submodule. Hence $\kappa'=\DO \otimes f^*\kappa$ belongs to~$f^\#(\cat D_0)$.
Finally, let us show that $\Dkk$ preserves~$f^\#(\cat D_0)$.
For $x\in f^\#(\cat D_0)$ and $c\in\cat C^c$ we compute, using first Lemma~\ref{Lem:extraction_of_rigids} and the rigidity of~$c\in\cat C^c$ and then~\eqref{eq:f_*-intertwines-dualities}:
\begin{align*}
f_*(c\otimes \dual_{\kappa'} (x))
\;\;\cong\;\; f_*(\dual_{\kappa'}(x \otimes \dual c))
\;\;\cong\;\; \dual_\kappa (f_* (x \otimes \dual c)) \,.
\end{align*}
The latter belongs to~$\cat D_0$ since $f_*(x\otimes \dual c)$ does by definition of~$x\in f^\#(\cat D_0)$ and since $\Dk$ preserves~$\cat D_0$ by hypothesis. This shows $\dual_{\kappa'}(x)\in f^\#(\cat D_0)$ as wanted.
\end{proof}

\bigbreak
\section{Categories over a base and relative compactness}
\label{se:rel-cpt}%

We now want to analyse a relative setting.

\begin{Def}
\label{def:B-cat}%
Let $\cat B$ be a rigidly-compactly generated tensor triangulated category that we call the ``base". We say that $\cat C$ is a \emph{$\cat B$-category} if it comes equipped with a \emph{structure} functor $p^*:\cat B\to \cat C$ satisfying Hypothesis~\ref{hyp:base}.
\end{Def}

\begin{Def}
\label{def:B-mor}%
A \emph{morphism of $\cat B$-categories} $f^*:(\cat D,q^*)\to (\cat C,p^*)$ is
a functor $f^*:\cat D\to \cat C$ satisfying Hypothesis~\ref{hyp:base} together with an isomorphism $f^*q^*\cong p^*$.
By the uniqueness property of adjoint functors, this canonically spawns isomorphisms $q_* f_*\cong p_*$ and $f\uu{1}q\uu{1}\cong p\uu{1}$.
In particular, we have an isomorphism in~$\cat C$:
\begin{equation}
\label{eq:f-omegas}%
f\uu{1}(\omega_q) = f\uu{1} q\uu{1}(\unit_\cat B) \cong p\uu{1}(\unit_\cat B) = \omega_p\,.
\end{equation}
\end{Def}

We can then prove the following generalization of Theorem~\ref{thm:rel-Groth}, in which we do not assume that $f^*$ satisfies Grothendieck-Neeman duality, but only that its source and target do, with respect to their base.

\begin{Thm}
\label{Thm:rel-rel-Groth}%
Let $f^*: \cat D\to \cat C$ be a morphism of $\cat B$-categories (Def.\,\ref{def:B-mor}). Assume that the structure morphisms $p^*:\cat B\to \cat C$ and~$q^*:\cat B\to \cat D$ satisfy Grothendieck-Neeman duality (Thm.\,\ref{thm:GN-intro}). Let $\cat B_0\subset \cat B$ be a $\cat B^c$-subcategory with dualizing object~$\kappa\in\cat B_0$ (Def.\,\ref{def:dualizing_object}). Let $\cat C_0=p^\#\cat B_0$ and $\cat D_0=q^\#\cat B_0$ be its compact pullbacks in~$\cat C$ and $\cat D$ respectively (Def.\,\ref{def:cpt_pullback}), which admit the dualizing objects
$$
\gamma:=\omega_p\otimes p^*(\kappa)\in\cat C_0
\qquadtext{and}
\delta:=\omega_q\otimes q^*(\kappa)\in\cat D_0
$$
respectively, by Theorem~\ref{thm:rel-Groth}. Then $f^\#(\cat D_0)=\cat C_0$ and $f_*$ restricts to a well-defined exact functor $f_*:\cat C_0\to \cat D_0$ which is duality-preserving with respect to $\dual_{\gamma}$ and $\dual_{\delta}$.
\end{Thm}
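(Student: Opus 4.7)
The plan is to reduce the statement to three ingredients already at our disposal: the functoriality of compact pullback (Proposition~\ref{prop:trans_pullback}), the absolute version of Grothendieck duality on subcategories (Theorem~\ref{thm:rel-Groth}), and the unconditional duality-intertwining property of~$f_*$ (Theorem~\ref{thm:duality_preserving}). Crucially, none of these inputs requires Grothendieck-Neeman duality for~$f^*$ itself, so the assumed GN-duality for the two structure functors $p^*$ and~$q^*$ will be enough.

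First I would apply Proposition~\ref{prop:trans_pullback} to the composable pair $\cat B \xra{q^*} \cat D \xra{f^*} \cat C$, whose composite is $p^* \cong f^* q^*$. This instantly yields
\[
\cat C_0 \;=\; p^\#(\cat B_0) \;=\; f^\#\bigl(q^\#(\cat B_0)\bigr) \;=\; f^\#(\cat D_0),
\]
which is the first assertion. Setting $c = \unit_\cat C$ in Definition~\ref{def:cpt_pullback} then gives $f_*(\cat C_0) \subseteq \cat D_0$, so $f_*$ does restrict to an exact functor $\cat C_0 \to \cat D_0$ as claimed. Moreover, because $p^*$ and $q^*$ each satisfy Grothendieck-Neeman duality by hypothesis, Theorem~\ref{thm:rel-Groth} applies to them separately and equips $\cat C_0 = p^\#(\cat B_0)$ and $\cat D_0 = q^\#(\cat B_0)$ with the advertised dualizing objects $\gamma \cong \omega_p \otimes p^*(\kappa)$ and $\delta \cong \omega_q \otimes q^*(\kappa)$, producing self-equivalences $\dual_\gamma$ on $\cat C_0\op$ and $\dual_\delta$ on $\cat D_0\op$.

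For the duality-preserving property, I would invoke the unconditional Theorem~\ref{thm:duality_preserving} at the object $\delta \in \cat D$: it delivers a canonical natural isomorphism $\zeta \colon \dual_\delta \circ f_* \isoto f_* \circ \dual_{f\uu{1}(\delta)}$ compatible with the reflexivity maps~$\varpi$ via diagram~\eqref{eq:duality_preserving}. The remaining task is to identify the twist $f\uu{1}(\delta)$ with~$\gamma$: combining Grothendieck duality for $q^*$ (to rewrite $\delta \cong q\uu{1}(\kappa)$), the canonical isomorphism $f\uu{1} q\uu{1} \cong p\uu{1}$ coming from Definition~\ref{def:B-mor}, and finally Grothendieck duality for~$p^*$ yields
\[
f\uu{1}(\delta) \;\cong\; f\uu{1}q\uu{1}(\kappa) \;\cong\; p\uu{1}(\kappa) \;\cong\; \omega_p \otimes p^*(\kappa) \;=\; \gamma.
\]
Transporting $\zeta$ along this canonical isomorphism and restricting it to $\cat C_0$ and $\cat D_0$ delivers the sought duality-preserving functor $f_*\colon(\cat C_0,\dual_\gamma)\to(\cat D_0,\dual_\delta)$.

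I do not anticipate a serious obstacle, as the theorem is essentially a clean repackaging of the three cited results. The only point that merits a moment's care is that the isomorphism $f\uu{1}(\delta)\cong\gamma$ used to transport~$\zeta$ is assembled entirely from canonical, natural isomorphisms (uniqueness of adjoints together with formula~\eqref{eq:Groth}), so the compatibility with the reflexivity maps~$\varpi$ recorded in~\eqref{eq:duality_preserving} survives the transport automatically and no fresh diagram chase is required.
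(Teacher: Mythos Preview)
Your proposal is correct and follows essentially the same argument as the paper: both use Proposition~\ref{prop:trans_pullback} to identify $\cat C_0 = f^\#(\cat D_0)$ (whence $f_*(\cat C_0)\subseteq\cat D_0$), both identify $f\uu{1}(\delta)\cong\gamma$ via the chain $f\uu{1}q\uu{1}(\kappa)\cong p\uu{1}(\kappa)$, and both invoke Theorem~\ref{thm:duality_preserving} for the duality-preserving property. Your write-up is slightly more explicit about the logical dependencies, but the underlying proof is the same.
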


\begin{proof}
Note that $\gamma=\omega_p\otimes p^*(\kappa)\cong p\uu{1}(\kappa)\cong f\uu{1}q\uu{1}(\kappa)\cong f\uu{1}(\delta)$. So we already know from Theorem~\ref{thm:duality_preserving} that $f_*$ is compatible with the dualities~$\dual_\gamma$ and~$\dual_\delta$. By Proposition~\ref{prop:trans_pullback}, we know that $\cat C_0=p^\#\cat B_0=f^\#q^\#\cat B_0=f^\#\cat D_0$. It follows from this and the definition of $f^\#\cat D_0$ that $f_*(\cat C_0)\subseteq\cat D_0$ yielding the desired $f_*:\cat C_0\to \cat D_0$.
\end{proof}

An example of the above relative discussion over a base category~$\cat B$ is the situation where $\cat B_0=\cat B^c$ and $\kappa=\unit$. In other words, we can assume that the base is sufficiently simple that the duality question over~$\cat B$ is solved in the ``trivial" way, as in Example~\ref{Exa:trivial_duality}. This is interesting in algebraic geometry when $\cat B=\Dqc(S)$ for $S$ regular, as we saw in Corollary~\ref{cor:cpt-pb-AG}, for instance when $S=\Spec(\kk)$ for $\kk$ a field. In that case, it is not a restriction to consider the trivial duality on~$\cat B$, with $\cat B_0=\cat B^c$ and~$\kappa=\unit$. We can then pull it back to obtain a more interesting subcategory with duality~$\cat C_0=p^\#(\cat B_0)$ in~$\cat C$.

\begin{Def}
\label{def:rel-comp}%
Let $\cat C$ be a \emph{$\cat B$-category} as in Definition~\ref{def:B-cat}. We define the full subcategory of~$\cat C$ of \emph{$\cat B$-relatively compact} objects to be
\[
\cat C^{c/p}:= p^\#(\cat B^c) = \SET{ x\in \cat C}{p_*(c\otimes x) \in \cat B^c \textrm{ for all compact } c \in \cat C^c} \,.
\]
\end{Def}

\begin{Exa}
Let $p:X\to S$ be a projective morphism with $S$ regular and let $p^*:\cat B=\Dqc(S)\to \Dqc(X)=\cat C$. Then $\cat C^{c/p}=\Db(\coh X)$ by Corollary~\ref{cor:cpt-pb-AG}.
\end{Exa}

\begin{Cor}
\label{cor:B-Groth}%
Let $f^*: \cat D\to \cat C$ be a morphism of $\cat B$-categories (Def.\,\ref{def:B-mor}) with structure morphisms $p^*:\cat B\to \cat C$ and~$q^*:\cat B\to \cat D$.
\begin{enumerate}[\indent\rm(a)]
\item
Compact pullback (Def.\,\ref{def:cpt_pullback}) preserves the subcategories of $\cat B$-relatively compact objects: $f^\#(\cat D^{c/q})=\cat C^{c/p}$.
\item
Suppose that $p^*:\cat B\to \cat C$ satisfies Grothendieck-Neeman duality. Then $\omega_p=p\uu{1}(\unit)$ is a dualizing object for the subcategory of relatively compact objects~$\cat C^{c/p}$ (Def.\,\ref{def:rel-comp}).
\item
\label{it:B-Groth}%
Suppose that $p^*$ and~$q^*$ satisfy Grothendieck-Neeman duality. Then the functor $f_*$ restricts to an exact functor $f_*:\cat C^{c/p}\to \cat D^{c/q}$ which is duality-preserving with respect to the dualities $\dual_{\omega_p}$ and~$\dual_{\omega_q}$.
\end{enumerate}
\end{Cor}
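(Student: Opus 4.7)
My plan is to observe that Corollary~\ref{cor:B-Groth} is essentially a direct specialization of the results that have just been proved, so the work is in setting up the specializations correctly rather than in any fresh computation.

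For part~(a), I would apply Proposition~\ref{prop:trans_pullback} to the composable pair $\cat B \xrightarrow{q^*} \cat D \xrightarrow{f^*} \cat C$. Since $f^*$ is a morphism of $\cat B$-categories, we have $f^*q^* \cong p^*$, and the uniqueness of adjoints propagates this to $(fq)_* \cong p_*$. Plugging the $\cat B^c$-submodule $\cat B^c$ itself into the conclusion of Proposition~\ref{prop:trans_pullback} gives
\[
\cat C^{c/p} = p^\#(\cat B^c) = f^\#(q^\#(\cat B^c)) = f^\#(\cat D^{c/q}),
\]
as desired. This step is purely formal once the compatibility of structure functors is unpacked.

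For part~(b), I would specialize Theorem~\ref{thm:rel-Groth} to the morphism $p^*\colon \cat B \to \cat C$ with input data $\cat D_0 := \cat B^c$ and $\kappa := \unit_\cat B$. By Example~\ref{Exa:trivial_duality}, $\cat B^c$ is a $\cat B^c$-submodule with dualizing object $\unit_\cat B$, so the hypotheses of Theorem~\ref{thm:rel-Groth} apply under the assumed Grothendieck-Neeman duality for $p^*$. The theorem then declares that $\kappa' = p\uu{1}(\unit_\cat B) = \omega_p$ is a dualizing object for $p^\#(\cat B^c) = \cat C^{c/p}$, which is precisely the conclusion of~(b). Note that the formula $\omega_p \cong \omega_p \otimes p^*(\unit_\cat B)$ is automatic.

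Finally, for part~(c), I would apply Theorem~\ref{Thm:rel-rel-Groth} with $\cat B_0 := \cat B^c$ and $\kappa := \unit_\cat B$. This immediately yields $\gamma = \omega_p \otimes p^*(\unit_\cat B) \cong \omega_p$ and $\delta = \omega_q \otimes q^*(\unit_\cat B) \cong \omega_q$, together with the fact that $f_*$ restricts to a duality-preserving functor $\cat C^{c/p} = p^\#(\cat B^c) \to q^\#(\cat B^c) = \cat D^{c/q}$ intertwining $\dual_{\omega_p}$ and $\dual_{\omega_q}$. I do not anticipate any genuine obstacle: the entire corollary is the $\cat B_0 = \cat B^c$, $\kappa = \unit_\cat B$ specialization of the two main theorems of Sections~\ref{se:Groth} and~\ref{se:rel-cpt}, the only subtlety being the bookkeeping provided by Proposition~\ref{prop:trans_pullback} that identifies $p^\#(\cat B^c)$ with $f^\#(\cat D^{c/q})$.
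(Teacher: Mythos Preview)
Your proposal is correct and follows exactly the same approach as the paper: part~(a) is Proposition~\ref{prop:trans_pullback} specialized to $\cat B_0=\cat B^c$, part~(b) is Theorem~\ref{thm:rel-Groth} applied to~$p^*$ with $\kappa=\unit_\cat B$, and part~(c) is Theorem~\ref{Thm:rel-rel-Groth} with the same choice. Your write-up simply spells out in more detail what the paper compresses into three citations.
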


\begin{proof}
Proposition~\ref{prop:trans_pullback} gives~(a). Theorem~\ref{thm:rel-Groth} applied to $p^*$ gives~(b). Theorem~\ref{Thm:rel-rel-Groth} gives~(c).
\end{proof}

\begin{center}
*\ *\ *
\end{center}

Historically, Grothendieck duality arose in order to generalize Serre duality to the relative situation (\ie to morphisms of schemes).  However, another generalization of Serre duality is given by the notion of a \emph{Serre functor} \cite{boka89, BondalOrlov01}, and we next explain how a relative version of Serre functors naturally arises in our theory.

\begin{Rem} \label{Rem:adj_enrichment}
If $f^*: \cat D\adjto \cat C : f_*$ is an adjunction between closed tensor categories with $f^*$ a tensor functor, then $\cat C$ inherits an enrichment over~$\cat D$ (see~\cite{Kelly05}): the Hom-objects are given by $\Crich (x,y) := f_*\,\ihomcat{C}(x,y)\in \cat D$, and the unit and composition morphisms  $\unit_\cat D \to \Crich(x,y)$ and $\Crich(y,z)\otimes_\cat D \Crich(x,y) \to \Crich(x,z)$ in $\cat D$ are obtained by adjunction in the evident way from the $\cat C$-internal unit and composition maps.
\end{Rem}

\begin{Thm}[Relative Serre duality] \label{Thm:Serre_duality}
Let $f^*: \cat D\to \cat C$ be a functor as in Hypothesis~\ref{hyp:base} and let
$\Crich$ denote
the resulting $\cat D$-enriched category as in Remark~\ref{Rem:adj_enrichment}.
Then there is a canonical natural isomorphism in~$\cat D$
\begin{align} \label{eq:Serre_iso}
\sigma_{x,y} : \dual\Crich(x,y) \stackrel{\sim}{\to} \Crich (y , x \otimes \DO)
\end{align}
for all $x\in \cat C^c$ and $y\in \cat C$,
where we recall $\dual:=\ihomcat{D}(-,\unit)$.
In particular, if we have the Wirthm\"uller isomorphism (Theorem~\ref{thm:Wirthmueller}), the pair $(\mathbb S:= (-)\otimes \DO, \sigma)$ defines a \emph{Serre functor on $\cat C^c$ relative to $\cat D^c$}, by which we mean that $\mathbb S$ is an equivalence $\mathbb S: \cat C^c \stackrel{\sim}{\to} \cat C^c$ and that $\sigma$ is a natural isomorphism $\dual\Crich(x,y)\cong \Crich(y, \mathbb S x)$  in the tensor-category $\cat D^c$ for all $x,y\in \cat C^c$.
\end{Thm}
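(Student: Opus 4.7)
The plan is to exhibit $\sigma_{x,y}$ as a composite of canonical natural isomorphisms, the two key ingredients being the rigidity of the compact object $x\in\cat C^c$ and the internal-hom realization~\eqref{eq:inthom-f_*-f^1} of the adjunction $f_*\dashv f\uu{1}$. No extra hypothesis beyond Hypothesis~\ref{hyp:base} is required for this first part.

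Concretely, rigidity of $x$ gives $\ihomcat{C}(x,y)\cong \dual x\otimes y$ (Definition~\ref{Def:rigidly}), so
$$\dual\Crich(x,y)\;=\;\ihomcat{D}(f_*\ihomcat{C}(x,y),\,\unit_\cat D)\;\cong\;\ihomcat{D}(f_*(\dual x\otimes y),\,\unit_\cat D).$$
Specializing~\eqref{eq:inthom-f_*-f^1} at the second argument $\unit_\cat D$, which inserts $\DO=f\uu{1}(\unit_\cat D)$, produces
$$\ihomcat{D}(f_*(\dual x\otimes y),\,\unit_\cat D)\;\cong\;f_*\,\ihomcat{C}(\dual x\otimes y,\,\DO).$$
Combining internal tensor-hom adjunction with the rigidity of~$\dual x$ (the dual of a rigid object is rigid), we then identify
$$\ihomcat{C}(\dual x\otimes y,\DO)\;\cong\;\ihomcat{C}\bigl(y,\,\ihomcat{C}(\dual x,\DO)\bigr)\;\cong\;\ihomcat{C}(y,\,\dual\dual x\otimes \DO)\;\cong\;\ihomcat{C}(y,\,x\otimes \DO),$$
using the canonical $\dual\dual x\cong x$ at the last step. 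Applying $f_*$ produces $\Crich(y,\,x\otimes \DO)$, and the composite of these steps is the desired~$\sigma_{x,y}$, manifestly natural in both variables.

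For the Serre-functor statement, assume we are in the Wirthm\"uller situation (Theorem~\ref{thm:Wirthmueller}). Then $\DO$ is $\otimes$-invertible by condition~(W\ref{it:w-inv}), so $\mathbb S = (-)\otimes\DO$ is an autoequivalence of $\cat C$ which restricts to an autoequivalence of $\cat C^c$, since invertible objects are compact and compactness is preserved by tensoring with an invertible. Moreover, as the Wirthm\"uller hypothesis includes Grothendieck-Neeman duality, $f_*$ preserves compact objects by Theorem~\ref{thm:GN-intro}\,(GN\ref{it:Neeman}); combined with the fact that $\ihomcat{C}(x,y)\cong \dual x\otimes y$ is compact whenever $x,y\in\cat C^c$, this guarantees that both $\Crich(x,y)$ and $\Crich(y,\,x\otimes\DO)$ lie in $\cat D^c$. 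Therefore $\sigma$ is a natural isomorphism in the tensor category $\cat D^c$, giving the advertised relative Serre structure. The only potentially delicate point is cosmetic: one must verify that the construction of~$\sigma$ is genuinely canonical, independent of the order in which the adjunction~\eqref{eq:inthom-f_*-f^1} and the various rigidity identifications are invoked. Because every intermediate step uses a uniquely determined (co)evaluation or unit/counit map, this coherence is automatic, and no substantial obstacle is anticipated.
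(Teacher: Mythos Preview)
Your proof is correct and follows essentially the same route as the paper: both arguments combine the internal realization~\eqref{eq:inthom-f_*-f^1} of $f_*\dashv f\uu{1}$ with the rigidity of~$x$ to build the isomorphism~$\sigma_{x,y}$, differing only in the order of these two steps (the paper applies~\eqref{eq:inthom-f_*-f^1} first and then invokes rigidity once via \cite[Thm.\ A.2.5]{HoveyPalmieriStrickland97}, whereas you unwind $\ihomcat{C}(x,y)$ first). Your treatment of the Serre-functor claim is slightly more explicit than the paper's, spelling out why $\Crich(x,y)\in\cat D^c$, but the content is the same.
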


\begin{proof}
Under our basic hypothesis, we have the adjunction $f_*\dashv f\uu{1}$ and its internal version. If $x$ is compact, and hence rigid, we obtain an isomorphism
\begin{align*}
\dual \,\Crich(x,y)
&\;\;=\;\; \ihomcat{D}\big( f_*\,\ihomcat{C}(x,y) , \unit \big) && \textrm{by definition}\\
&\;\;\cong\;\; f_*\,\ihomcat{C}\big( \ihomcat{C}(x,y) , \DO  \big) && \eqref{eq:inthom-f_*-f^1} \\
&\;\;\cong\;\; f_*\,\ihomcat{C}\big(  y , x \otimes \DO  \big) &&  x\in \cat C^c  \\
&\;\; = \;\; \Crich (y, x\otimes \DO) &&
\end{align*}
(the second isomorphism uses~\cite[Thm.\ A.2.5]{HoveyPalmieriStrickland97} again). This is the claimed natural isomorphism~$\sigma$. When the object~$\DO$ is invertible (Thm.\,\ref{thm:Wirthmueller}), the functor $\mathbb S=(-)\otimes \DO$ restricts to a self-equivalence on compacts.
\end{proof}

\begin{Rem}
Usually, what one means by a ``Serre functor'' is a self-equivalence~$\mathbb S$ on a $\kk$-linear (triangulated) category~$\cat C$ together with an isomorphism as in~\eqref{eq:Serre_iso}, where $\kk$ is a field and $\dual$ should be replaced by the $\kk$-linear dual. We can easily deduce such a structure from our result when the target category is $\cat D=\Der(\kk)$.
\end{Rem}

\begin{Cor}[Serre duality] \label{cor:Serre_duality_over_k}
Let $f^*: \cat D\to \cat C$ satisfy the Wirthm\"uller isomorphism (Theorem~\ref{thm:Wirthmueller}), and assume moreover that $\cat D=\Der(\kk)$ is the derived category of a field~$\kk$.
Then $\cat C^c$ is  $\kk$-linear and endowed with a Serre functor
\[
\mathbb S=(-)\otimes \DO : \cat C^c \stackrel{\sim}{\to} \cat C^c
\quad\quad
\sigma: \cat C(x,y)^\star \stackrel{\sim}{\to} \cat C(y, \mathbb S x)
\]
in the sense of~\cite{boka89, BondalOrlov01}, where $(-)^\star=\Hom_\kk(-,\kk)$ denotes the $\kk$-linear dual.
\end{Cor}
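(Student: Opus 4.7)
The plan is to obtain Corollary~\ref{cor:Serre_duality_over_k} as the $H^0$-shadow of the enriched statement already established in Theorem~\ref{Thm:Serre_duality}, specialized to $\cat D=\Der(\kk)$. First I would justify the $\kk$-linearity: the functor $f^*:\Der(\kk)\to\cat C$ is by assumption exact and tensor, hence induces a unital ring map $\kk=\End_{\Der(\kk)}(\unit_{\cat D})\to\End_{\cat C}(\unit_{\cat C})$ which supplies $\cat C$ (and therefore the thick subcategory $\cat C^c$) with a canonical $\kk$-linear structure. Under the Wirthm\"uller hypothesis, Theorem~\ref{thm:Wirthmueller} tells us that $\DO$ is $\otimes$-invertible in~$\cat C$, so $\mathbb{S}=(-)\otimes\DO$ restricts to an auto-equivalence of~$\cat C^c$.

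Next I would feed $\cat D=\Der(\kk)$ into Theorem~\ref{Thm:Serre_duality} to obtain, for all $x,y\in\cat C^c$, a natural isomorphism
$$
\sigma_{x,y}:\dual\,\Crich(x,y)\isoto\Crich(y,\mathbb{S}x)
\qquad\text{in }\Der(\kk),
$$
where $\Crich(-,-)=f_*\ihom_{\cat C}(-,-)$ and $\dual=\ihom_{\Der(\kk)}(-,\kk)$. Applying the functor $H^0=\Hom_{\Der(\kk)}(\kk,-)$, and using that $f^*(\kk)=\unit_{\cat C}$ together with the adjunction $f^*\adj f_*$, we compute for any $a,b\in\cat C^c$
$$
H^0\,\Crich(a,b)=\Hom_{\Der(\kk)}\bigl(\kk,f_*\ihom_{\cat C}(a,b)\bigr)\cong\Hom_{\cat C}\bigl(\unit,\ihom_{\cat C}(a,b)\bigr)\cong\cat C(a,b).
$$
This identifies the right-hand side $H^0\,\Crich(y,\mathbb{S}x)$ with $\cat C(y,\mathbb{S}x)$.

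For the left-hand side, I would use formality of complexes over a field: every $V\in\Der(\kk)$ splits as $V\cong\bigoplus_{i}H^{i}(V)[-i]$, so
$$
H^0(\dual V)=\Hom_{\Der(\kk)}(V,\kk)\cong\Hom_\kk\bigl(H^0 V,\kk\bigr)=(H^0 V)^\star.
$$
Applied to $V=\Crich(x,y)$, combined with the previous identification, $H^0(\sigma_{x,y})$ becomes the desired natural isomorphism $\cat C(x,y)^\star\isoto\cat C(y,\mathbb{S}x)$. Naturality in both variables and $\kk$-linearity are inherited from $\sigma$, so the pair $(\mathbb{S},\sigma)$ is a Serre functor on the $\kk$-linear category $\cat C^c$ in the sense of~\cite{boka89,BondalOrlov01}.

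I expect no serious obstacle: the content is already in Theorem~\ref{Thm:Serre_duality}, and the only nontrivial step is the passage from the enriched hom in $\Der(\kk)$ to honest $\Hom$-groups via $H^0$, which is clean precisely because we work over a field (where every object is formal and compact objects are finite-dimensional complexes). Everything else reduces to bookkeeping.
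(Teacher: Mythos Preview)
Your proposal is correct and follows exactly the same route as the paper: apply $H^0=\Hom_{\Der(\kk)}(\unit,-)$ to the enriched isomorphism~\eqref{eq:Serre_iso} of Theorem~\ref{Thm:Serre_duality} and use $H^0\circ\dual\cong(-)^\star\circ H^0$. You have simply unpacked the one-line proof in the paper, supplying the justification (via formality over a field) for the compatibility $H^0\circ\dual\cong(-)^\star\circ H^0$ and making explicit the identification $H^0\Crich(a,b)\cong\cat C(a,b)$.
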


\begin{proof}
Apply $H^0= \Der(\kk)(\unit , -)$ to~\eqref{eq:Serre_iso} and note that $H^0 \circ \dual \cong (-)^\star\circ H^0$.
\end{proof}

\begin{Rem} \label{Rem:Serre_uniqueness}
If it exists, a Serre functor $(\mathbb S, \sigma)$ on $\cat C^c$ relative to $\cat D^c$ as in Theorem~\ref{Thm:Serre_duality} is unique. More precisely, if $(\mathbb S,\sigma)$ and $(\mathbb S',\sigma')$ are two of them then by Yoneda there is a unique isomorphism $\smash{\mathbb S\stackrel{\sim}{\to} \mathbb S' }$ of functors $\cat C^c\to \cat C^c$ inducing $(\sigma'\sigma^{-1})_*$ on the Hom sets.
A similar remark holds for the usual Serre functors.
\end{Rem}

\begin{Exa}[Projective varieties] \label{Exa:proj_var}
Let $p: X\to \Spec(\kk)$ be a \emph{projective} variety over a field~$\kk$, and let $p^*:\cat B=\Der(\kk)\to \cat C=\Der(\Qcoh X)$ be the pull-back functor.  Then $\cat C^c=\Dperf(X)$, and moreover $\cat C^{c/p}=\Db(\coh X)$ by Theorem~\ref{thm:cpt-pb-AG}.  Thus we have the inclusion $\cat C^c = \Dperf(X)\subset \Db(\coh X)= \cat C^{c/p} \smash{\stackrel{\textrm{def.}}{=}} p^\#(\cat B^c)$ and therefore $p^*$ must satisfy Grothendieck-Neeman duality by Proposition~\ref{Prop:Crelcpt_thick}.  We conclude that $p$ is quasi-perfect.  Moreover, by Theorem~\ref{Thm:duality_iso} we know that the subcategory $\Db(\coh X)$ consists of $\omega_p$-reflexive objects in~$\Der(\Qcoh X)$.  Hence by our Grothendieck duality Theorem~\ref{thm:rel-Groth}, the object $\omega_p$ is dualizing for~$\Db(\coh X)$, \ie in more classical language, it is a \emph{dualizing complex for~$X$} (as defined in \cite{Neeman10}). Can we describe~$\omega_p$ more explicitly?

If $X$ is \emph{Gorenstein} (e.g.\ regular, or a complete intersection), then by \cite[p.\,299]{Hartshorne66} the structure sheaf $\mathcal O_X$ is also a dualizing complex for~$X$.  But then, by the uniqueness of dualizing complexes (see Corollary~\ref{cor:unique-AG}), there exists a tensor invertible $\ell\in \Der(\Qcoh X)$ and an isomorphism $\omega_p \cong \mathcal O_X\otimes \ell = \ell$, so in this case $\omega_p$ is invertible and therefore $p^*$ satisfies the Wirthm\"uller isomorphism (Thm.\,\ref{thm:Wirthmueller}). Indeed, it can be shown in general that Gorenstein varieties are characterized by having an invertible dualizing complex (see \cite[$\S$8.3]{AvramovIyengarLipman10}).  Still, this does not yet determine $\omega_p$ up to isomorphism.

Assume further that $X$ is \emph{regular}, so that we have the equality $\cat C^c=\Dperf(X) = \Db(\coh X) = \cat C^{c/p}$.  In this case,  by condition~\eqref{it:rel-cpt} of Theorem~\ref{thm:Wirthmueller}, $\omega_p$ must be invertible.  Moreover, Theorem~\ref{cor:Serre_duality_over_k} applies so that $-\otimes \omega_p$ yields a Serre functor on~$\cat C^c$.  But it is a basic classical result that $\cat C^c$ also admits a Serre functor $-\otimes \Sigma^n\omega_X$, where $\omega_X= \Lambda^n \Omega_{X/\kk}$ is the canonical sheaf on~$X$  (see e.g.\ \cite[Lemma~4.18]{Rouquier10}); here we assume  $X$ is of pure dimension~$n$, for simplicity.  Therefore $\omega_p\cong \Sigma^n\omega_X$ by Remark~\ref{Rem:Serre_uniqueness}.

Suppose now that $f\colon X\to Y$ is a $\kk$-morphism of projective varieties. By Corollary~\ref{cor:B-Groth}\,\eqref{it:B-Groth}, we have a well-defined $f_*:\Db(\coh X)\to \Db(\coh Y)$ compatible with the dualities $\dual_{\omega_p}$ and $\dual_{\omega_q}$ as discussed above (for $q:Y\to \Spec (\kk)$).
\end{Exa}

This example illustrates how our abstract notions and results specialize to ones that are familiar to algebraic geometers, depending on the various additional assumptions that are available.

\bigbreak
\section{Examples beyond Grothendieck duality}
\label{se:Beyond}%

In this final section we show that Matlis duality as well as Neeman's version of Brown-Comenetz duality also fall under the scope of our theory.

\begin{Thm} \label{Thm:abstract_DGI_duality}
Let $f^*: \cat D\to \cat C$ be a functor satisfying our basic Hypothesis~\ref{hyp:base}.
Let $\cat C_0$ be a subcategory of $\cat C$ admitting a dualizing object~$\kappa' \in \cat C$ (external or not, see Def.\,\ref{def:dualizing_object}).
Assume moreover that $\kappa'$ admits a \emph{Matlis lift}~$\kappa$, that is, an object $\kappa \in \cat D$ such that $f\uu{1}(\kappa)\cong \kappa'$.
Then $\kappa$ is a possibly external dualizing object for the subcategory $\cat D_0:= \mathrm{thick}(f_*(\cat C_0))$, the thick subcategory generated by the image of $\cat C_0$ under push-forward.
\end{Thm}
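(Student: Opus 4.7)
The plan is to apply Theorem~\ref{thm:duality_preserving} as the main input, then propagate the defining properties of a dualizing object from the generators $f_*(\cat C_0)$ to the thick closure $\cat D_0$. By the obvious external generalization of Lemma~\ref{Lem:duality_criterion}, it suffices to verify (i) that $\dual_\kappa(\cat D_0)\subseteq\cat D_0$, and (ii) that every $y\in\cat D_0$ is $\kappa$-reflexive in the sense of Remark~\ref{rem:dualizing}. First, though, I would observe essentially for free that $\cat D_0$ is a $\cat D^c$-submodule: for $d\in\cat D^c$ and $x\in\cat C_0$ the projection formula~\eqref{eq:right_proj_formula} gives $d\otimes f_*(x)\cong f_*(f^*(d)\otimes x)$, which lies in $f_*(\cat C_0)$ since $f^*(d)\in\cat C^c$ (Cor.~\ref{cor:base}) and $\cat C_0$ is a $\cat C^c$-submodule; the general case follows since $d\otimes-$ is exact.

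For property~(i), I would exploit the canonical natural isomorphism $\zeta\colon\dual_\kappa\circ f_*\isoto f_*\circ\dual_{\kappa'}$ supplied by Theorem~\ref{thm:duality_preserving} (applicable since $f\uu{1}(\kappa)\cong\kappa'$). For any $x\in\cat C_0$ this gives $\dual_\kappa(f_*x)\cong f_*(\dual_{\kappa'}x)$, which belongs to $f_*(\cat C_0)\subseteq\cat D_0$ because $\dual_{\kappa'}$ restricts to an equivalence of $\cat C_0$. Since $\dual_\kappa\colon\cat D\to\cat D$ is exact, the full subcategory $\{\,y\in\cat D\mid\dual_\kappa(y)\in\cat D_0\,\}$ is thick, and containing $f_*(\cat C_0)$ it contains the whole of $\cat D_0=\mathrm{thick}(f_*(\cat C_0))$.

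For property~(ii), the key ingredient is the commutative diagram~\eqref{eq:duality_preserving}, which is precisely the content of Theorem~\ref{thm:duality_preserving} saying that $\zeta$ is compatible with the canonical reflexivity maps. For $x\in\cat C_0$ this diagram identifies $\varpi_\kappa$ at $f_*(x)$ with $f_*(\varpi_{\kappa'})$ at $x$ (up to the isomorphism $\zeta$ and its square). Since $\kappa'$ is dualizing for $\cat C_0$, the map $\varpi_{\kappa'}$ is invertible at every $x\in\cat C_0$, and hence $\varpi_\kappa$ is invertible at every object of $f_*(\cat C_0)$. Now $\varpi_\kappa\colon\Id_{\cat D}\to\dual_\kappa\dual_\kappa$ is a natural transformation between exact functors, so the full subcategory of $\cat D$ on which it is an isomorphism is thick; containing $f_*(\cat C_0)$, it therefore contains $\cat D_0$, proving~(ii).

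The only nontrivial input in this plan is the commutativity of~\eqref{eq:duality_preserving}, which we already have on record (indeed, the delicate diagram chase has been outsourced to~\cite{CalmesHornbostel09} in the proof of Theorem~\ref{thm:duality_preserving}). Once this is granted, the entire argument is a formal exercise in propagating properties along thick subcategories, with the mild twist that $\kappa$ itself need not belong to $\cat D_0$ — a fact that causes no difficulty because both verifications only concern the behaviour of the functor $\dual_\kappa$ on objects of $\cat D_0$.
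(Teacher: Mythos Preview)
Your proposal is correct and follows essentially the same approach as the paper: the key step in both is to use the commutative square~\eqref{eq:duality_preserving} from Theorem~\ref{thm:duality_preserving} to deduce that $\varpi_\kappa$ is invertible at every object of $f_*(\cat C_0)$, and then to propagate to the thick closure. Your write-up is in fact more complete than the paper's terse proof, which only addresses reflexivity; you additionally verify that $\cat D_0$ is a $\cat D^c$-submodule and that $\dual_\kappa$ preserves $\cat D_0$, both of which are needed for the conclusion but are left implicit in the paper.
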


\begin{proof}
Since $\Idcat{D}$ and $\dual^2_{\kappa'}$ are triangulated functors, it suffices to show that the natural transformation $\varpi_{f_*(x)}\colon f_*(x)\to \dual_{\kappa'}^2(f_*x)$ of~\eqref{eq:can_kappa} is invertible whenever $x$ belongs to~$\cat C_0$. For this, recall that $\varpi_{f_*(x)}$ appears in the commutative square~\eqref{eq:duality_preserving}. Since the top horizontal map in~\eqref{eq:duality_preserving} is invertible for $x \in \cat C_0$, the commutativity of~\eqref{eq:duality_preserving} implies that $\varpi_{f_*(x)}$ is also invertible, as desired.
\end{proof}

\begin{Exa}[Matlis duality] \label{Exa:matlis}
Let $R$ be a commutative noetherian local ring, let $R\to k$ be the quotient map to the residue field and let $f^*:\Der(R)\to \Der(k)$ be the induced functor as in Example~\ref{Exa:AG_affine}. Then $E(k)$, the injective hull of the $R$-module~$k$, is a Matlis lift of~$k$:
$f\uu{1}(E(k))=\mathrm{RHom}_R(k,E(k))\cong k$ in $\Der(k)$.
By Theorem~\ref{Thm:abstract_DGI_duality}, the functor $\dual_{E(k)}=\mathrm{RHom}_R(-,E(k))$ induces a duality on the thick subcategory of $\Der(R)$ generated by $f_*(k)$, \ie on complexes whose homology is bounded and consists of finite length modules.
As $E(k)$ is injective, we may restrict this duality to the category of finite length modules.
\end{Exa}
\begin{Exa}[Pontryagin duality] \label{Exa:pontryagin}
The dualizing object $E(k)$ of Example~\ref{Exa:matlis} is typically external, \ie it often lies outside the subcategory it dualizes: $E(k)\not\in \mathrm{thick}(f_*(k))$. This already happens in the archetypical example of (discrete $p$-local) Pontryagin duality, where $R\to k$ is the quotient map $\bbZ_{(p)}\to \bbZ/p$ and $E(k)$ is the Pr\"ufer group $\bbZ[\frac{1}{p}]/\bbZ\cong\bbQ/\bbZ_{(p)}$,
which has infinite length.
\end{Exa}

\begin{Exa}[Generalized Matlis duality] \label{Exa:DGI}
Let $R\to k$ be a morphism of commutative $\Sphere$-algebras and consider the three induced functors
\begin{equation}
\vcenter{\xymatrix{
\quad\quad\quad\quad \cat C:=\Der(k)
  \ar@{<-}[d]_-{\!\Displ k \otimes_R(-) \,=\, f^{*}}
  \ar@<17pt>[d]^-{\Displ f_*}
  \ar@<37pt>@{<-}[d]^{\Displ f\uu{1}\,=\, \Hom_R(k,- )  }
  \ar@<67pt>@{}[d]
    \\
\quad\quad\quad\quad \cat D:= \Der(R)
}}
\end{equation}
as in Example~\ref{Exa:Brave_New}.
We write $R\to k$ rather then $B\to A$ in order to be consistent with the notation of Dwyer-Greenlees-Iyengar~\cite{DwyerGreenleesIyengar06}.
In \emph{loc.\,cit.,} a \emph{Matlis lift of $k$} is defined to be a (structured) $R$-module~$I$ such that $\Der(k)(x,k) \cong \Der(R)(f_*x, I)$ naturally in $x\in \Der(k)$, \ie by Yoneda, such that $f\uu{1}(I)\cong k$ (see \cite[Def.\,6.2 and Rem.\,6.3]{DwyerGreenleesIyengar06}).
Moreover, $I$ is required to be ``effectively constructible from~$k$'', a property somewhat stronger than $I$ belonging to the localizing subcategory generated by $f_*(k)$.
In particular, a Matlis lift of~$k$ in the sense of Dwyer-Greenlees-Iyengar is also a Matlis lift, in the more modest sense of Theorem~\ref{Thm:abstract_DGI_duality}, of the dualizing object $\kappa':=k$ for the subcategory $\cat C_0:=\Der(k)^c$ of~$\Der(k)$. Hence by Theorem~\ref{Thm:abstract_DGI_duality} we immediately obtain the following generalization of Matlis duality.
\end{Exa}

\begin{Cor} \label{Cor:DGI}
Let $R\to k$ be a morphism of commutative $\Sphere$-algebras, and assume that the $R$-module $I$ is a Matlis lift of~$k$ in the sense of \cite{DwyerGreenleesIyengar06}.
Then $I$ is a (possibly external) dualizing object for the thick subcategory of $\Der(R)$ generated by~$f_*(k)$.
\qed
\end{Cor}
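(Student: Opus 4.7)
The plan is to recognize Corollary~\ref{Cor:DGI} as a direct specialization of Theorem~\ref{Thm:abstract_DGI_duality}, with essentially all of the nontrivial work already having been done in that theorem. First I would set $\kappa' := k$ and $\cat C_0 := \Der(k)^c$ inside $\cat C = \Der(k)$. Since $k = \unit_{\Der(k)}$ is trivially $\otimes$-invertible, Example~\ref{Exa:trivial_duality} produces that $k$ is a dualizing object for the whole of $\cat C_0$.

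Next, I would verify that a DGI-Matlis lift of $k$ is, in particular, a Matlis lift of $\kappa' = k$ in the sense required by Theorem~\ref{Thm:abstract_DGI_duality}, i.e.\ that $f\uu{1}(I) \cong k$. But this is immediate from the definition recalled in Example~\ref{Exa:DGI}: the defining natural bijection $\Der(k)(x,k) \cong \Der(R)(f_*x, I)$ is, by Yoneda together with the adjunction $f_* \dashv f\uu{1}$, the same datum as an isomorphism $f\uu{1}(I) \cong k$. (The additional ``effective constructibility'' clause in the DGI definition is not needed for our purposes here, which is consistent with the fact that our notion of Matlis lift is weaker.)

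With these inputs in hand, Theorem~\ref{Thm:abstract_DGI_duality} delivers that $I$ is a (possibly external) dualizing object for the thick subcategory $\cat D_0 := \mathrm{thick}(f_*(\cat C_0)) = \mathrm{thick}(f_*(\Der(k)^c))$ of~$\Der(R)$. To match the statement of the corollary, I would finally identify this with $\mathrm{thick}(f_*(k))$: since $\Der(k)$ is compactly generated by its $\otimes$-unit (cf.\ Example~\ref{exa:BraveNew}), one has $\Der(k)^c = \mathrm{thick}(k)$, and since $f_*$ is exact the two thick subcategories $\mathrm{thick}(f_*(\mathrm{thick}(k)))$ and $\mathrm{thick}(f_*(k))$ of $\Der(R)$ coincide.

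There is no genuine obstacle in this argument; the only care needed is the translation of the DGI notion of Matlis lift into the hypothesis of Theorem~\ref{Thm:abstract_DGI_duality}, and that translation is already made explicit in the preceding Example~\ref{Exa:DGI}. The proof is therefore a two-line invocation of the abstract theorem.
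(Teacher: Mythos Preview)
Your proposal is correct and follows exactly the approach the paper intends: the corollary is marked \qed\ because the preceding Example~\ref{Exa:DGI} has already set up $\cat C_0 = \Der(k)^c$ with dualizing object $\kappa' = k$ and observed that a DGI-style Matlis lift satisfies $f\uu{1}(I)\cong k$, so Theorem~\ref{Thm:abstract_DGI_duality} applies directly. Your final identification $\mathrm{thick}(f_*(\Der(k)^c)) = \mathrm{thick}(f_*(k))$ makes explicit a step the paper leaves implicit.
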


Similar ideas and results can also be found in \cite{DwyerGreenleesIyengar11} \cite{Yekutieli10}~\cite{PortaShaulYekutieli14}.

\begin{center}
*\ *\ *
\end{center}

\begin{Exa}[Brown-Comenetz duality]
Let $\SH$ denote the stable homotopy category of spectra and recall that the Brown-Comenetz dual
$I_{\bbQ/\bbZ}E$ of a spectrum $E \in \SH$ is defined, using Brown Representability, by the equation
\begin{equation}
	\label{eq:BC-defn}
\Hom_{\bbZ}(\pi_0(X\wedge E), \bbQ/\bbZ) \cong \Hom_{\SH}( X, I_{\bbQ/\bbZ}E) \qquad (X\in \SH).
\end{equation}
It follows immediately from the tensor-Hom adjunction that $I_{\bbQ/\bbZ}E$ is given by the function spectrum $\dual_{I_{\bbQ/\bbZ}}(E) = \ihom_{\SH}(E,I_{\bbQ/\bbZ})$ where $I_{\bbQ/\bbZ}:=I_{\bbQ/\bbZ}\Sphere$ denotes the Brown-Comenetz dual of the sphere.
Classical Brown-Comenetz duality asserts that the functor
$\dual_{I_{\bbQ/\bbZ}} = \ihom_{\SH}(-,I_{\bbQ/\bbZ}):\SH\op\to \SH$
restricts to a duality on the subcategory of ``homotopy finite'' spectra, \ie spectra whose homotopy groups are all finite.
Moreover, defining $I_{\bbZ}$ to be the homotopy fiber of the canonical map $H\bbQ \to I_{\bbQ/\bbZ}$, we obtain a functor $\dual_{I_{\bbZ}} = \ihom_{\SH}(-,I_{\bbZ})$ which restricts to a duality on the larger subcategory consisting of those spectra whose homotopy groups are all finitely generated (cf.~\cite[\S 2]{HeardStojanoska14}).  Restricted to the subcategory of homotopy finite spectra, this ``Anderson duality'' agrees with Brown-Comenetz duality up to a shift: $\dual_{I_{\bbQ/\bbZ}}\cong\Sigma\dual_{I_{\bbZ}}$. 

On the other hand, the main result of \cite{Neeman92} establishes the existence of a (unique) triangulated functor $\Pi : \SH \to \Der(\bbZ[\frac{1}{2}])$ which lifts stable homotopy with~2 inverted: $H^0 \circ \Pi = \pi_0(-)[\frac{1}{2}]$.  
Neeman defines a new ``Brown-Comenetz type'' dual $\widetilde{I}E$ by the equation 
\begin{equation} \label{eq:defNBC}
\Hom_{\Der(\bbZ[\frac{1}{2}])}(\Pi(X\wedge E), \bbZ[\textstyle\frac{1}{2}]) \cong \Hom_{\SH}( X, \widetilde{I}E) \qquad (X\in \SH).
\end{equation}
As before, we immediately see that $\widetilde{I}E = \dual_{\widetilde{I}\Sphere}(E)$.  The next theorem demonstrates that Neeman's $\dual_{\widetilde{I}\Sphere}$ delivers a version of Brown-Comenetz duality with the same scope as Anderson duality -- and with a completely conceptual origin -- provided we also invert the prime 3. To this end, regard $\Pi$ as a functor $\SH[\frac{1}{6}]\to \Der(\bbZ[\frac{1}{6}])$ where $\SH[\frac{1}{6}]$ denotes the stable homotopy category localized away from $6$; being a finite localization of the ordinary stable homotopy category~$\SH$, it is again a rigidly-compactly generated tensor-triangulated category generated by its unit.

Neeman proves that, even before 3 is inverted, the Moore spectrum construction extends to an exact functor $f^*$ (denoted by $F$ in \emph{loc.\,cit.}) which is left adjoint to $\Pi$ and admits a natural isomorphism $\mu: f^*(x) \wedge f^*(y)\cong f^*(x\otimes y)$ (\cite[Prop.\,3.6 and 5.5]{Neeman92}).  He then shows that $\mu$ satisfies the hexagon axiom of a monoidal functor if and only if 3 is also inverted (\cite[Prop.\,5.6 and Ex.\,5.1]{Neeman92}), and derives from this the second main result of his article: $\SH[\frac{1}{6}]$ admits an enrichment over $\Der(\bbZ[\frac{1}{6}])$.  Although he never states so, he really shows that the functor $f^*:\Der(\bbZ[\frac{1}{6}])\to \SH[\frac{1}{6}]$ is symmetric monoidal, with structure map $f^*(\unit)\to \unit$ given by the identity map (in fact he deduces the enrichment from the monoidal adjunction $f^*\dashv f_*$ as in Remark~\ref{Rem:adj_enrichment}).  Since $f^*$ has a right adjoint it preserves coproducts, so it satisfies Hypothesis~\ref{hyp:base} and we deduce the existence of $f\uu{1}$ by Corollary~\ref{cor:base}, as usual:
\begin{equation*}
\vcenter{\xymatrix@R=3em{
\SH[\frac{1}{6}]
  \ar[d]|-{\Displ f_* = \Pi }
    \\
\Der(\bbZ[\frac{1}{6}])
  \ar@<-23pt>[u]_-{\Displ f\uu{1}}
  \ar@<23pt>[u]^-{\Displ f^{*}}
}}
\end{equation*}
\end{Exa}

\begin{Thm} \label{thm:NBCduality}
The above triple $f^*\dashv f_* \dashv f\uu{1}$ has the following properties:
\begin{enumerate}[\indent\rm(a)]
\item \label{it:DO_NBC}
The relative dualizing object  associated with~$f^*$ is Neeman's Brown-Comenetz dual of the sphere: $\DO = \widetilde{I}\Sphere $.
Hence we have $\widetilde{I}E = \dual_{\DO}(E)$ for all~$E\in \SH[\frac{1}{6}]$.
\item \label{it:scope_of_NBC}
The object $\widetilde{I}\Sphere$ is dualizing for the subcategory $\cat C_0\subset \SH[\frac{1}{6}]$ of spectra whose homotopy groups are finitely generated $\bbZ[\frac{1}{6}]$-modules. In other words, Neeman's Brown-Comenetz duality restricts to an equivalence
$\widetilde{I}(-): (\cat C_0)\op\stackrel{\sim}{\to}\cat C_0$.
\item \label{it:Pi_preserves_dualities}
There is a canonical natural isomorphism
\[\Pi(\widetilde{I}E)
= f_* \dual_{\DO}(E)
\cong
\dual_{\unit} (f_* E) =
\ihom_{\Der(\bbZ[\frac{1}{6}])}(\Pi E,\bbZ[\textstyle \frac{1}{6}])\]
for all $E\in \SH[\frac{1}{6}]$, analogous to the isomorphism
\[\pi_*(I_{\bbQ/\bbZ}E)\cong \Hom_{\bbZ}(\pi_*E,\bbQ/\bbZ)\]
of ordinary Brown-Comenetz duality.
\item \label{it:GNvsNBC}
$f^*$ does not satisfy Grothendieck-Neeman duality:
$f\uu{1}\not\cong \widetilde{I}\Sphere \wedge f^*$.
\end{enumerate}
\end{Thm}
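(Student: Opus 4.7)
For parts~(a) and (c), the plan is to deduce them directly from the abstract machinery already in place. Part~(a) will follow by Yoneda: the defining property~\eqref{eq:defNBC} of $\widetilde I\Sphere$, specialized at $E=\Sphere$, coincides with the characterization~\eqref{eq:DO} of $\DO=f\uu{1}(\unit_{\cat D})$ once we recall $f_*=\Pi$ and $\unit_{\cat D}=\bbZ[\frac{1}{6}]$; this yields $\DO\cong\widetilde I\Sphere$, from which the identification $\widetilde I E\cong \dual_{\DO}(E)$ follows from~\eqref{eq:defNBC} with $X$ replaced by $X\wedge E$, using the tensor--Hom adjunction and the projection formula~\eqref{eq:right_proj_formula}. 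Part~(c) is then just the special case $\kappa=\unit_{\cat D}$ of the intertwining~\eqref{eq:f_*-intertwines-dualities} in Theorem~\ref{thm:duality_preserving}, since $f\uu{1}(\unit_{\cat D})=\DO$.

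Part~(b) amounts essentially to a reinterpretation of Neeman's original theorem in~\cite{Neeman92}, which establishes that $\widetilde I$ induces an anti-equivalence on~$\cat C_0$. Via the identification from part~(a), Neeman's result reads in our language as: $\dual_{\DO}$ is an anti-equivalence on $\cat C_0$, i.e.\ $\DO$ is a dualizing object in the sense of Definition~\ref{def:dualizing_object}. What our framework adds here is a conceptual explanation: the intertwining of part~(c) reduces the verification of the conditions of Lemma~\ref{Lem:duality_criterion} to the ordinary $\bbZ[\frac{1}{6}]$-linear duality $\mathrm{RHom}(-,\bbZ[\frac{1}{6}])$, whose finiteness is controlled by the fact that $\bbZ[\frac{1}{6}]$ is a regular noetherian ring of global dimension one.

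Part~(d) is the main new content, and I will prove it by contradiction. Suppose GN-duality holds; then by criterion~(GN\,\ref{it:Neeman}) of Theorem~\ref{thm:GN-intro}, $f_*=\Pi$ must send compact objects to compact objects, so in particular $\Pi(\Sphere[\frac{1}{6}])$ would have to be a perfect complex over $\bbZ[\frac{1}{6}]$, and hence have bounded cohomology. The key observation is that Neeman's degree-zero identity $H^0\Pi=\pi_0(-)[\frac{1}{6}]$ upgrades formally to $H^n\Pi(X)\cong \pi_{-n}(X)[\frac{1}{6}]$ for every $n\in\bbZ$, simply because $\Pi$ is triangulated and commutes with~$\Sigma$. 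Applied to the sphere, this gives $H^{-k}\Pi(\Sphere)\cong \pi_k(\Sphere)[\frac{1}{6}]$, which is nonzero for arbitrarily large $k>0$: for each prime $p\geq 5$, the Adams class $\alpha_1\in \pi_{2p-3}(\Sphere)$ has order~$p$ and hence survives the inversion of~$6$. Thus $\Pi(\Sphere)$ has unbounded cohomology and cannot be perfect, contradicting~(GN\,\ref{it:Neeman}). The only step beyond routine formalities is this cohomological upgrade of Neeman's relation; once in hand, (d) is immediate.
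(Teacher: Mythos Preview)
Your treatment of parts~(a), (c) and~(d) is correct and essentially identical to the paper's; for~(d) you are even slightly more explicit, exhibiting the $\alpha_1$-classes rather than simply citing the unboundedness of $\pi_*(\Sphere)_{(p)}$.

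Part~(b), however, has a gap. You attribute the anti-equivalence $\widetilde I:(\cat C_0)\op\isoto\cat C_0$ to~\cite{Neeman92}, but Neeman does not prove this statement; his paper constructs~$\Pi$, establishes the monoidal structure on its left adjoint, and defines~$\widetilde I$, but the duality claim on~$\cat C_0$ is not there. So~(b) cannot be obtained by mere reinterpretation. Your fallback sentence, that the intertwining of~(c) ``reduces the verification of the conditions of Lemma~\ref{Lem:duality_criterion} to the ordinary $\bbZ[\frac16]$-linear duality'', points in the right direction but is not yet a proof: you have not specified the target subcategory $\cat D_0\subset\Der(\bbZ[\frac16])$, verified that $\unit$ is dualizing on it, identified $\cat C_0$ with $f^\#(\cat D_0)$, or said which result of the paper transfers reflexivity back to~$\cat C_0$.

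The paper fills this in concretely. Take $\cat D_0$ to be the complexes with degreewise finitely generated homology; since $\bbZ[\frac16]$ is hereditary, every object splits (non-canonically) as $\coprod_i\Sigma^iM_i\cong\prod_i\Sigma^iM_i$, and one checks directly that $\ihom(-,\unit)$ is a duality on~$\cat D_0$. Using the same upgrade $H^n\Pi\cong\pi_{-n}(-)[\frac16]$ you invoked for~(d), one sees that $\cat C_0=f^\#(\cat D_0)$. Then Theorem~\ref{Thm:duality_iso}\,\eqref{it:i=>ii} with $\kappa=\unit$ (note $\unit\in\cat D_0$) gives $\DO$-reflexivity for every $x\in\cat C_0$, which is the substantive half of Lemma~\ref{Lem:duality_criterion}.
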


\begin{proof}
Part~\eqref{it:DO_NBC} holds simply because $\DO$ and $\widetilde{I}\Sphere$
are defined by the same natural isomorphism:~\eqref{eq:defNBC} with $E=\Sphere$.
Part~\eqref{it:Pi_preserves_dualities} is~\eqref{eq:inthom-f_*-f^1}.
Part~\eqref{it:GNvsNBC} holds because $f_*=\Pi$ does not preserve compact objects.
Indeed, $\pi_*(\Sphere)_{(p)}$ is well-known to be unbounded for every prime~$p$ (see e.g.~\cite{McGibbonNeisendorfer84}).
Finally, to prove~\eqref{it:scope_of_NBC}, consider the full subcategory $\cat D_0\subset \cat D=\Der(\bbZ[\frac{1}{6}])$ of complexes whose homology groups are finitely generated $\bbZ[\frac{1}{6}]$-modules. As the ring $\bbZ[\frac{1}{6}]$ is principal (hence hereditary), every object of its derived category has (non-canonically) the form $M=\coprod_i \Sigma^i M_i$ for modules~$M_i$, and every morphism between such objects has nonzero components only of degree $0$ and~$-1$. An object~$M=\coprod_i \Sigma^i M_i$ belongs to the subcategory $\cat D_0$ if and only if every~$M_i$ is finitely generated. Using that the natural map $\coprod_i \Sigma^i M_i\isoto \prod_i \Sigma^i M_i$ is an isomorphism here, it is easy to see that $\ihom(-,\unit)$ is a duality on~$\cat D_0$. By Theorem~\ref{Thm:duality_iso}\,\eqref{it:i=>ii} for $\kappa=\unit$, using that our $\cat C_0$ is nothing but~$f^\#(\cat D_0)$, we get the result.
\end{proof}

\begin{Rem} \label{Rem:no_f}
As explained in \cite[Rem.\,4.2]{Neeman92}, there are K-theoretic obstructions for the functor $f^*:\Der(\bbZ[\frac{1}{2}]) \to \SH[\frac{1}{2}]$
 to derive from a functor of the underlying (Waldhausen) model categories.
This shows that there can be interesting functors $f^*$ with no underlying map~$f$.
\end{Rem}

\subsection*{Acknowledgements:}
We thank Greg Stevenson for helpful discussions, Henning Krause for the reference to~\cite{Jantzen87}, and an anonymous referee for a careful reading and several useful comments.

\bibliographystyle{alpha}%
\bibliography{TG-articles}

\end{document}